\newcommand{\mysqr}[1]{rectangle +(0.5,0.5) +(0.25,0.25) node{#1}}
\def\le{\leqslant}
\def\ge{\geqslant}
\def\tr#1{\lfloor #1\rfloor}
\def\cl#1{\lceil #1\rceil}
\def\lpa#1{\bigl({#1}\bigr)}
\def\Lpa#1{\Bigl({#1}\Bigr)}
\def\llpa#1{\biggl({#1}\biggr)}
\def\pp{\mathbb{p}}
\def\dd#1{{\,\rm d}#1}
\def\with{\;\text{ with }\;}
\def\and{\;\text{and}\;}
\def\ve{\varepsilon}
\def\divides{\,|\,}
\newtheorem{thm}{Theorem}[section]
\newtheorem{prop}[thm]{Proposition}
\newtheorem{lem}[thm]{Lemma}
\newtheorem{coro}[thm]{Corollary}
\theoremstyle{remark}
\newtheorem{rmk}{Remark}
\DeclareRobustCommand{\Stirling}{\genfrac\{\}{0pt}{}}
\newcommand{\tdef}[1]{\emph{#1}}
\newcommand{\defeq}{:=}
\newcommand{\Res}{\mathrm{Res}}
\renewcommand{\Re}{\operatorname{Re}}
\let\@fnsymbol\@arabic
\begin{document}

\title{Phase transitions from $\exp(n^{1/2})$ to $\exp(n^{2/3})$ in
the asymptotics of banded plane partitions\thanks{This work was
partially supported by a joint FWF-MOST project under the Grants I2309
(FWF) and 104-2923-M-009-006-MY3 (MOST).}}

\author{Wenjie Fang\\
    {\normalsize \emph{LIGM, Univ.~Gustave Eiffel, CNRS, ESIEE Paris,
     F-77454, Marne-la-Vall\'ee, France}}\\
    \quad\\
Hsien-Kuei Hwang\thanks{Also partially supported by an Investigator 
Award from Academia Sinica under the Grant AS-IA-104-M03 and a joint 
ANR-MOST project under the Grants MOST 105-2923-E-001-MY4.}\\
    {\normalsize \emph{Institute of Statistical Science, 
    Academia Sinica,
    Taipei 11529, Taiwan}}\\
    \quad\\
Mihyun Kang\\
    {\normalsize \emph{Institute of Discrete Mathematics,
    Graz University of Technology,
    8010 Graz,
    Austria}}}
\date{\today}

\maketitle
\begin{abstract}

We examine the asymptotics of a class of banded plane partitions
under a varying bandwidth parameter $m$, and clarify the transitional
behavior for large size $n$ and increasing $m=m(n)$ to be from $c_1
n^{-1} \exp\lpa{c_2 n^{1/2}}$ to $c_3 n^{-49/72} \exp\lpa{c_4
n^{2/3}+c_5 n^{1/3}}$ for some explicit coefficients $c_1,\dots,c_5$.
The method of proof, which is a unified saddle-point analysis for all
phases, is general and can be extended to other classes of plane
partitions.

\end{abstract}
 
\section{Introduction}

Asymptotics of partition-related generating functions with the unit
circle as the natural boundary has been the subject of study since
Hardy and Ramanujan's 1918 epoch-making paper \cite{Hardy1918}.
In particular, it is known that the number of partitions of $n$ into
positive integers is asymptotic to
\begin{align}\label{eq:pn-as}
    p_n := [z^n]\prod_{k \ge 1} \frac1{1-z^k}
    \sim c n^{-1}e^{\beta n^{1/2}},
    \with (c,\beta) = \llpa{\frac{1}{4 \sqrt{3}},
    \frac{\sqrt{2}\, \pi}{\sqrt{3}}},
\end{align}
(see \cite{Andrews1976,Hardy1918} or 
\cite[\href{https://oeis.org/A000041}{A000041}]{oeis2019}), and that 
of \emph{plane partitions} of $n$ satisfies 
\begin{align}\label{eq:pp}
    \pp_n^{} = [z^n]\prod_{k\ge1}\frac1{\lpa{1-z^k}^k}
    \sim c n^{-25/36} e^{\beta n^{2/3}},
    \with (c,\beta) = \llpa{\frac{\zeta(3)^{7/36}
    e^{-\zeta'(-1)}}{2^{11/36} \sqrt{3\pi}},
    \frac{3\zeta(3)^{1/3}}{2^{2/3}}},
\end{align}
(see \cite{Andrews1976,Wright1931} or
\cite[\href{https://oeis.org/A000219}{A000219}]{oeis2019}). Here the
symbol $[z^n]f(z)$ denotes the coefficient of $z^n$ in the Taylor
expansion of $f$ and $\zeta(s)$ the Riemann zeta function
\cite{Apostol1976,Whittaker1996}. \emph{Throughout this paper, the
values of the generic (or local) symbols $c,\beta$ or $c_j, \beta_j$
may differ from one occurrence to the other, and will always be
locally specified.}

The increase of the sub-exponential (or stretched exponential) term
from $e^{\beta n^{1/2}}$ in the case of ordinary partitions to
$e^{\beta n^{2/3}}$ in the case of plane partitions is noticeable,
and marks the essential difference in the respective asymptotic
enumeration. As integer partitions are also encountered in
statistical physics, astronomy, and other engineering applications,
one naturally wonders if there is a tractable combinatorial model
that interpolates between the two different orders $e^{n^{1/2}}$ and
$e^{n^{2/3}}$ when some structural parameter varies. This paper aims
to address this aspect of partition asymptotics and examines in
detail a class of plane partitions with a natural notion of bandwidth
$m$ whose variation yields a model in which we can fully clarify the
transitional behavior from being of order $e^{\beta n^{1/2}}$ for
bounded $m$ to $e^{\beta n^{2/3}}$ when $m\gg n^{1/3}$, providing
more modeling flexibility of these partitions. Our study constitutes
the first asymptotic realization of such phase transitions in the
analytic theory of partitions. Readers are referred to 
\cite[Section~VII.10]{Flajolet2009} for an introduction to phase 
transitions in combinatorial structures.

Intuitively, if we impose a constraint to one or two of the 
dimensions of plane partitions, then by suitably varying the 
constraint, we can generate families of objects whose asymptotic
behaviors interpolate between $e^{n^{1/2}}$ and $e^{n^{2/3}}$. An 
initial attempt can be found, \textit{e.g.}, in \cite{Gordon1969},
where Gordon and Houten computed the asymptotic counting formula for
``$k$-rowed partitions'' whose nonzero parts decrease strictly along
each row of size $n$. However, they studied only the situations when 
$k$ is bounded and when $k\to\infty$, and do not consider how exactly 
the asymptotic behavior changes with respect to varying $k$ 
(depending on $n$). See Section~\ref{sec:m-rowed} for the phase 
transitions in plane partitions with a given number of rows. 

The plane partitions of $n\ge0$ may be viewed as a matrix with
nonincreasing entries along rows and columns and with the entry-sum
equal to $n$. The class of plane partitions we work on in this paper
is the \emph{double shifted plane partitions} studied by Han and
Xiong in \cite{Han2019} with an explicit notion of width, which for
simplicity will be referred to as the \emph{banded plane partitions}
(or \emph{BPPs}) in this paper. These are plane partitions arranged
on the \emph{stair-shaped region} $\mathbb{T}_{m} = \{
(i,j)\in\mathbb{N}^2\mid j\le i\le j+m-1\}$, $m\in\mathbb{Z}^+$,
where $\mathbb{N}=\mathbb{Z}^+\cup\{0\}$. Formally, a banded plane
partition of width $m$ is a function $f: \mathbb{T}_m \to \mathbb{N}$
with finite support such that, for any $(i,j) \in \mathbb{T}_m$, we
have $f(i,j) \ge f(i,j+1)$ when $(i,j+1) \in \mathbb{T}_m$, and
$f(i,j) \ge f(i+1,j)$ when $(i+1,j) \in \mathbb{T}_m$.
Figure~\ref{fig:bpp-example} illustrates two instances of BPPs.

\begin{figure}[!thbp]
\centering
\begin{tikzpicture}
\begin{scope}
\draw (0,0) \mysqr{$7$};
\draw (0.5,0) \mysqr{$7$};
\draw (1,0) \mysqr{$4$};
\draw (1.5,0) \mysqr{$2$};
\draw (0.5,0.5) \mysqr{$6$};
\draw (1,0.5) \mysqr{$4$};
\draw (1.5,0.5) \mysqr{$2$};
\draw (2,0.5) \mysqr{$2$};
\draw (1,1) \mysqr{$3$};
\draw (1.5,1) \mysqr{$1$};
\draw (2,1) \mysqr{$1$};
\draw (1.5,1.5) \mysqr{$1$};
\end{scope}
\begin{scope}[xshift=3cm]
\draw (0,0) \mysqr{7};
\draw (0.5,0) \mysqr{7};
\draw (1,0) \mysqr{4};
\draw (1.5,0) \mysqr{2};
\draw (0.5,0.5) \mysqr{6};
\draw (1,0.5) \mysqr{4};
\draw (1.5,0.5) \mysqr{2};
\draw (2,0.5) \mysqr{2};
\draw (1,1) \mysqr{3};
\draw (1.5,1) \mysqr{1};
\draw (2,1) \mysqr{1};
\draw (1.5,1.5) \mysqr{1};
\draw[line width=1.2] (0,0) -- (0,0.5) -- (0.5,0.5) -- (0.5,1) -- (1,1) -- (1,1.5) -- (1.5,1.5) -- (1.5,2) -- (2,2) -- (2,2.5);
\draw[line width=1.2] (2,0) -- (2,0.5) -- (2.5,0.5) -- (2.5,1) -- (3,1) -- (3,1.5) -- (3.5,1.5) -- (3.5,2) -- (4,2);
\draw[line width=1.2] (0,0) -- (2,0);
\draw[dashed, line width=1.2] (2,2.5) -- (2.5,3);
\draw[dashed, line width=1.2] (4,2) -- (4.5,2.5);
\end{scope}
\end{tikzpicture}
\begin{tikzpicture}
\begin{scope}
\draw (0,0) \mysqr{15};
\draw (0.5,0) \mysqr{10};
\draw (1,0) \mysqr{7};
\draw (0.5,0.5) \mysqr{8};
\end{scope}
\begin{scope}[xshift=2cm]
\draw (0,0) \mysqr{15};
\draw (0.5,0) \mysqr{10};
\draw (1,0) \mysqr{7};
\draw (0.5,0.5) \mysqr{8};
\draw[line width=1.2] (0,0) -- (0,0.5) -- (0.5,0.5) -- (0.5,1) -- (1,1) -- (1,1.5) -- (1.5,1.5) -- (1.5,2) -- (2,2) -- (2,2.5);
\draw[line width=1.2] (2,0) -- (2,0.5) -- (2.5,0.5) -- (2.5,1) -- (3,1) -- (3,1.5) -- (3.5,1.5) -- (3.5,2) -- (4,2);
\draw[line width=1.2] (0,0) -- (2,0);
\draw[dashed, line width=1.2] (2,2.5) -- (2.5,3);
\draw[dashed, line width=1.2] (4,2) -- (4.5,2.5);
\end{scope}
\end{tikzpicture}
\medskip
\caption{Two instances of banded plane partition of size $40$ and 
width $4$ (with and without the outer banded staircase).}
\label{fig:bpp-example}
\end{figure}
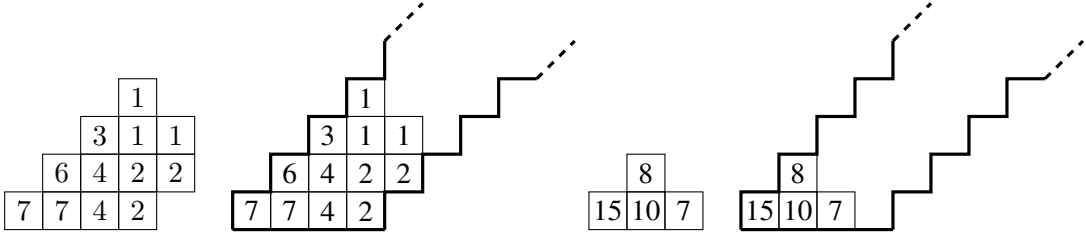

The \tdef{size} of a BPP is the sum $\sum_{(i,j) \in \mathbb{T}_m}
f(i,j)$. We denote by $G_{n,m}$ the number of BPPs of size $n$ and  
width $m$, \textit{i.e.}, BPPs that can fit in $\mathbb{T}_m$. A
closed-form expression for the generating function $G_m(z)\defeq
\sum_{n \ge 0} G_{n,m} z^n$ is given in \cite[Theorem~1.1]{Han2019}
as $G_m(z) = P(z) Q_m(z)$, where
\begin{equation} \label{eq:han-xiong}
\begin{split}
    P(z) &= \prod_{k \ge 1} \frac1{1-z^k},
    \;\text{ and } \;
	Q_m(z) = \prod_{k\ge0}\prod_{1\le h < j <m}
	\frac1{1-z^{2mk+h+j}}.
\end{split}
\end{equation}
In particular, 
\begin{align*}
    Q_3(z) &= \prod_{k\ge0}\frac1{1-z^{6k+3}},\quad
    Q_4(z) = \prod_{k\ge0}\frac1{\lpa{1-z^{8k+3}}
    \lpa{1-z^{8k+4}}\lpa{1-z^{8k+5}}},\\
    Q_5(z) &= \prod_{k\ge0}\frac1{\lpa{1-z^{10k+3}}
    \lpa{1-z^{10k+4}}\lpa{1-z^{10k+5}}^2
    \lpa{1-z^{10k+6}}\lpa{1-z^{10k+7}}}.
\end{align*}
For a BPP $f$ with $m \ge n$, the function $g$ on $\mathbb{N}^2$
defined by $g(i,j) = f(i+j, j)$ is a plane partition, and by
replacing each row of $g$ (which is an integer partition) by its
conjugate partition, we obtain a column-strict plane partition
(weakly decreasing in each row but strictly decreasing in each
column). This transformation is clearly bijective. An example is
given in Figure~\ref{fig:strict-bij}.

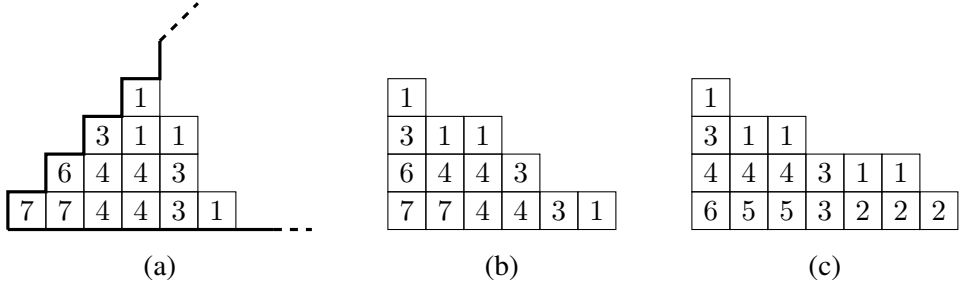
\begin{figure}[!thbp]
\centering
\begin{tikzpicture}
\begin{scope}
\draw (0,0) \mysqr{$7$};
\draw (0.5,0) \mysqr{$7$};
\draw (1,0) \mysqr{$4$};
\draw (1.5,0) \mysqr{$4$};
\draw (2, 0) \mysqr{$3$};
\draw (2.5, 0) \mysqr{$1$};
\draw (0.5,0.5) \mysqr{$6$};
\draw (1,0.5) \mysqr{$4$};
\draw (1.5,0.5) \mysqr{$4$};
\draw (2,0.5) \mysqr{$3$};
\draw (1,1) \mysqr{$3$};
\draw (1.5,1) \mysqr{$1$};
\draw (2,1) \mysqr{$1$};
\draw (1.5,1.5) \mysqr{$1$};
\draw[line width=1.2] (0,0) -- (0,0.5) -- (0.5,0.5) -- (0.5,1) -- (1,1) -- (1,1.5) -- (1.5,1.5) -- (1.5,2) -- (2,2) -- (2,2.5);
\draw[dashed, line width=1.2] (2,2.5) -- (2.5,3);
\draw[line width=1.2] (0,0) -- (3.5,0);
\draw[dashed, line width=1.2] (3.5,0) -- (4,0);
\node (a) at (2,-0.5) {(a)};
\end{scope}
\begin{scope}[xshift=5cm]
\draw (0,0) \mysqr{$7$};
\draw (0.5,0) \mysqr{$7$};
\draw (1,0) \mysqr{$4$};
\draw (1.5,0) \mysqr{$4$};
\draw (2, 0) \mysqr{$3$};
\draw (2.5, 0) \mysqr{$1$};
\draw (0,0.5) \mysqr{$6$};
\draw (0.5,0.5) \mysqr{$4$};
\draw (1,0.5) \mysqr{$4$};
\draw (1.5,0.5) \mysqr{$3$};
\draw (0,1) \mysqr{$3$};
\draw (0.5,1) \mysqr{$1$};
\draw (1,1) \mysqr{$1$};
\draw (0,1.5) \mysqr{$1$};
\node (b) at (1.5,-0.5) {(b)};
\end{scope}
\begin{scope}[xshift=9cm]
\draw (0,0) \mysqr{$6$};
\draw (0.5,0) \mysqr{$5$};
\draw (1,0) \mysqr{$5$};
\draw (1.5,0) \mysqr{$3$};
\draw (2, 0) \mysqr{$2$};
\draw (2.5, 0) \mysqr{$2$};
\draw (3,0) \mysqr{$2$};
\draw (0,0.5) \mysqr{$4$};
\draw (0.5,0.5) \mysqr{$4$};
\draw (1,0.5) \mysqr{$4$};
\draw (1.5,0.5) \mysqr{$3$};
\draw (2,0.5) \mysqr{$1$};
\draw (2.5,0.5) \mysqr{$1$};
\draw (0,1) \mysqr{$3$};
\draw (0.5,1) \mysqr{$1$};
\draw (1,1) \mysqr{$1$};
\draw (0,1.5) \mysqr{$1$};
\node (c) at (1.75,-0.5) {(c)};
\end{scope}
\end{tikzpicture}
\medskip
\caption{Example of the bijection between BPPs with $m \ge n$ and
column-strict plane partitions: (a) a BBP $f$ with $m \ge n$, (b)
the associated plane partition $g$, (c) the column-strict plane
partition obtained by taking the conjugate partition of each row of
$g$.} \label{fig:strict-bij}
\end{figure}

The generating function of column-strict plane partitions is known to 
be of the form 
\[
	\prod_{k\ge1}\frac1{(1-z^k)^{\tr{(k+1)/2}}};
\]
see \cite{Gordon1968,Stanley1971} or 
\cite[\href{https://oeis.org/A003293}{A003293}]{oeis2019}.

Based on the generating function \eqref{eq:han-xiong}, Han and Xiong
showed in \cite{Han2019}, by an elementary convolution approach
developed in \cite{Han2018}, that the number $G_{n,m}$ of BPPs of 
size $n$ and width $m$ satisfies
\begin{align}\label{eq:hx1}
    G_{n,m} \sim c(m) n^{-1} e^{\beta(m) \sqrt{n}},
\end{align}
for large $n$ and bounded $m\ge1$, where 
\[
    (c(m),\beta(m)) := \llpa{
	\frac{\sqrt{m^2+m+2}}{2^{(m^2-3m+14)/4} \sqrt{3m}}
	\prod\limits_{3\le j<m}
	\sin \Lpa{\frac{j\pi}{2m}}^{-\tr{(j-1)/2}},
	\sqrt{\frac{m^2+m+2}{6m}}\,\pi}.
\]
Thus $\log G_{n,m}$ is still of asymptotic order $\sqrt{n}$ when $m$ 
is bounded. Note that $c(1)=c(2)=1/(4\sqrt{3})$ and 
$\beta(1)=\beta(2)=\sqrt{2}\,\pi/\sqrt{3}$, the same as $c$ and 
$\beta$ in \eqref{eq:pn-as}, respectively. 

Now if we pretend that the formula \eqref{eq:hx1} holds also for
increasing $m$, then since $\beta(m)\sim \sqrt{m/6}\,\pi$ for large
$m$, we see that $\beta(m)\sqrt{n}\asymp \sqrt{mn}\asymp n^{2/3}$
when $m\asymp n^{1/3}$ (where the Hardy symbol $a_n\asymp b_n$ stands
for \emph{equivalence of growth order} for large $n$, equivalent to
the Bachmann-Laudau notation $a_n = \Theta(b_n)$; see
\cite{Knuth1976}). Furthermore, we will show in
Proposition~\ref{prop:Cm} that $\log c(m) \sim
-\frac{7\zeta(3)}{8\pi^2}\,m^2$ for large $m$. Then equating
$m^2\asymp\sqrt{mn}$ also gives $m\asymp n^{1/3}$. Thus we would
expect that \eqref{eq:hx1} remains valid for $m=o\lpa{n^{1/3}}$ and
the ``phase transition" occurs around $m\asymp n^{1/3}$. However,
while the latter is true by such a heuristic reasoning, the former is
not as we will prove that \eqref{eq:hx1} holds indeed only when
$m=o\lpa{n^{1/7}}$, although the weaker asymptotic estimate $\log
G_{n,m}\sim \beta(m)\sqrt{n}$ does hold uniformly for $1\le
m=o\lpa{n^{1/3}}$ (see \eqref{eq:Gnm-small} and \eqref{eq:n-1over7}).
This implies particularly the estimate
\begin{equation}\label{eq:logG-small}
    \log G_{n,m}\sim \frac{\pi}{\sqrt{6}}\,\sqrt{mn},
\end{equation}
which holds uniformly when $m\to\infty$, $m=o\lpa{n^{1/3}}$.

On the other hand, Gordon and Houten \cite{Gordon1969} showed that 
\begin{align}\label{eq:m-infty}
    G_{n,n} = [z^n]\prod_{k\ge1}\frac1{(1-z^k)^{\tr{(k+1)/2}}}
    \sim cn^{-49/72} e^{\beta_1 n^{2/3}+\beta_2 n^{1/3}},
\end{align}
where
\begin{align}\label{eq:c-b-b}
    (c,\beta_1,\beta_2)
    = \Lpa{\frac{e^{\zeta'(-1)/2 - \pi^4/(3456\zeta(3))} 
    \zeta(3)^{13/72}}{2^{3/4} (3\pi)^{1/2}},
    \frac{3\zeta(3)^{1/3}}{2}, 
    \frac{\pi^2}{24\zeta(3)^{1/3}}}.
\end{align}
This implies particularly the weak asymptotic estimate
\begin{align}\label{eq:logG-m-large}
	\log G_{n,n} \sim \frac{3\zeta(3)^{1/3}}{2}\, n^{2/3}.
\end{align}

In Section~\ref{sec:pt} we will derive stronger asymptotic
approximations to $G_{n,m}$ for all possible values of $m$, $1\le
m\le n$, covering \eqref{eq:hx1} and \eqref{eq:m-infty} as special
cases. In particular, as far as log-asymptotics is concerned, we
derive a uniform estimate, covering also the most interesting
critical range when $m\asymp n^{1/3}$; see
Proposition~\ref{prop:critical}. Define
\begin{equation}\label{eq:eta-def}
    \eta_d^{}(z) \defeq 
    \sum_{\ell \ge 1} \frac{e^{-\ell z}}
    {\ell^{2d-1}(1+e^{-\ell z})}
    \qquad(d\in\mathbb{N}; \Re(z)>0).
\end{equation}
\begin{wrapfigure}[14]{r}{0.25\textwidth}
    \begin{center}
    \includegraphics[width=0.9\linewidth]{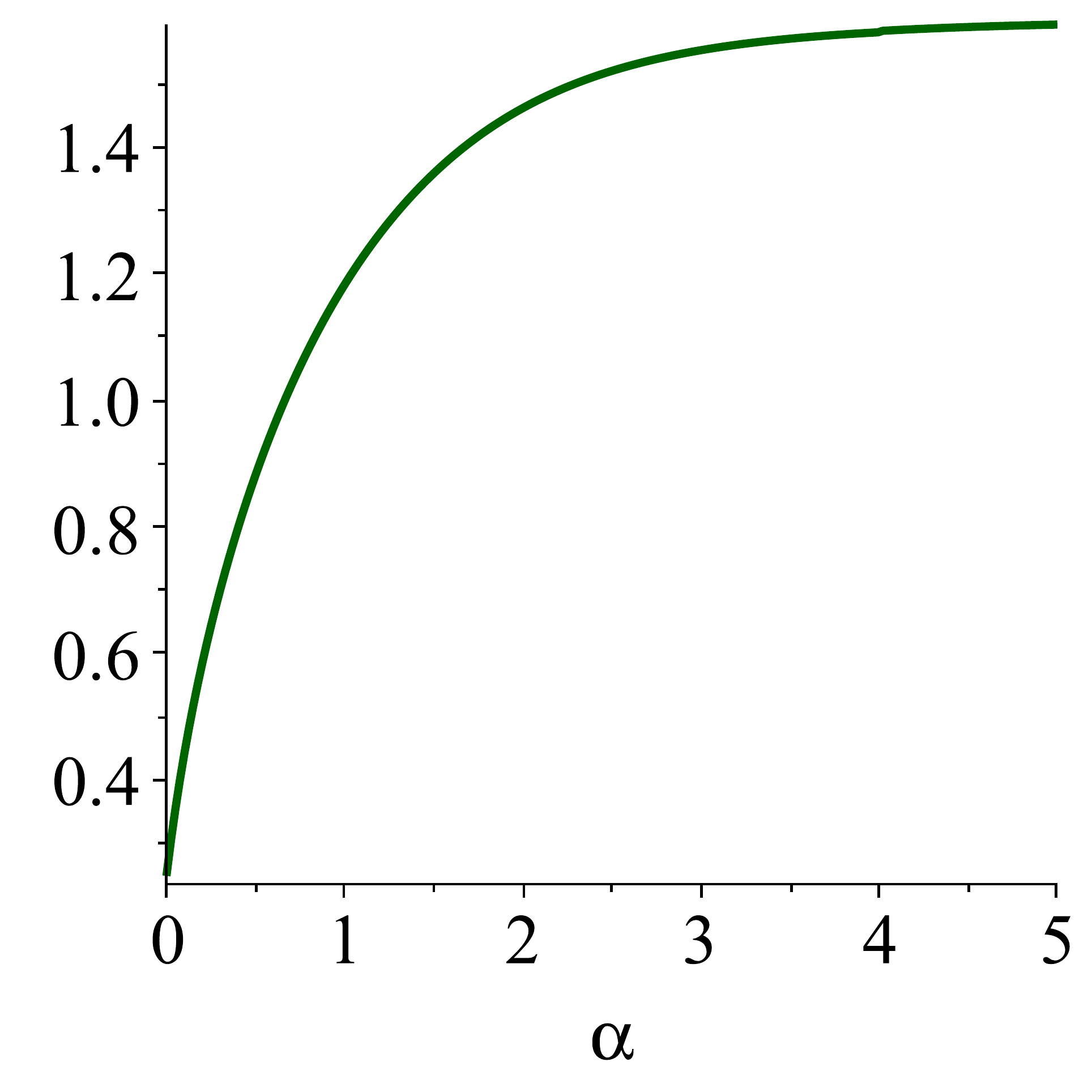}
    \end{center}
    \caption{A plot of the increasing function $G(\alpha)$.}
\end{wrapfigure}

\begin{thm} \label{thm:log-pt}
Let $\alpha := mn^{-1/3}$. Then 
\begin{align}\label{eq:logG-ua}
    \frac{\log G_{n,m}}{n^{2/3}} 
    \sim G(\alpha) 
    := r + \frac{\zeta(3)- 2\eta_2(\alpha r)}{2r^2},
\end{align}
uniformly when $\alpha\gg n^{-1/3}$ (or $m\to\infty$), where     
$r=r(\alpha)>0$ solves the equation 
\begin{align}\label{eq:sdpt}
    r^3 -\zeta(3) 
    +2\eta_2(\alpha r)-\alpha r\eta_2'(\alpha r) = 0.
\end{align}
In particular,
\begin{align}\label{eq:C-alpha-as}
    G(\alpha)
    \sim \begin{dcases}
        \frac{\pi}{\sqrt{6}}\,\sqrt{\alpha}, 
        & \text{if }\alpha\to0;\\
        \frac32\,\zeta(3)^{1/3},
        & \text{if }\alpha\to\infty.\\
    \end{dcases}
\end{align}
\end{thm}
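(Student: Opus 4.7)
The plan is a saddle-point analysis of Cauchy's formula
\[
    G_{n,m} = \frac{1}{2\pi i}\oint G_m(z)\,z^{-n-1}\dd z
\]
on a circle $|z|=e^{-t_0}$ with $t_0 = r\,n^{-1/3}$, taking $r=r(\alpha)>0$ to be the positive root of \eqref{eq:sdpt}. Since only log-asymptotics are claimed, one direction is essentially free: positivity of the Taylor coefficients of $G_m$ gives the upper bound $\log G_{n,m}\le \log G_m(e^{-t_0})+t_0n$ for every $t_0>0$, and the matching lower bound reduces to the standard major-arc argument for partition-type generating functions, which for a log-estimate only needs $|G_m(e^{-t_0+i\theta})/G_m(e^{-t_0})|$ to decay in $|\theta|$ on a suitable subarc. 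The refined decay rates required for the sharp results in Section~\ref{sec:pt} are not needed here.

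The heart of the proof is therefore a uniform asymptotic expansion of $\log G_m(e^{-t})$ as $t\to 0^+$. Starting from \eqref{eq:han-xiong} and $-\log(1-x)=\sum_{\ell\ge 1}x^\ell/\ell$, the $Q_m$-factor becomes
\[
\log Q_m(e^{-t}) = \sum_{\ell\ge 1}\frac{1}{\ell\lpa{1-e^{-2m\ell t}}}\sum_{1\le h<j<m}e^{-\ell(h+j)t},
\]
where the inner sum admits the symmetric closed form
\[
\sum_{1\le h<j<m}e^{-\ell(h+j)t}
 = \frac{1}{2}\llpa{\frac{(e^{-\ell t}-e^{-\ell mt})^2}{(1-e^{-\ell t})^2} - \frac{e^{-2\ell t}-e^{-2\ell mt}}{1-e^{-2\ell t}}}.
\]
Expanding in $\ell t\to 0$ while keeping $\ell mt$ unsimplified, the second fraction contributes only a $1/t$-term, while the first collapses via the identity
\[
\sum_{\ell\ge 1}\frac{1-e^{-\ell u}}{\ell^3(1+e^{-\ell u})} = \zeta(3) - 2\eta_2(u),
\]
applied with $u=mt$, and combining with the analogous expansion of $\log P(e^{-t})$ yields the key formula
\[
\log G_m(e^{-t}) = \frac{\zeta(3) - 2\eta_2(mt)}{2t^2} + O\lpa{t^{-1}},
\]
which I expect to hold uniformly in $m$.

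Substituting $t_0=r\,n^{-1/3}$ (so $mt_0 = \alpha r$) the saddle-point functional becomes
\[
\log G_m(e^{-t_0})+t_0 n \;\sim\; n^{2/3}\llpa{r + \frac{\zeta(3)-2\eta_2(\alpha r)}{2r^2}},
\]
and differentiating in $r$ reproduces exactly equation \eqref{eq:sdpt}; uniqueness and positivity of $r(\alpha)$ follow from the strict convexity of the bracketed function on $(0,\infty)$, which is checked from the monotonicity of $\eta_2$ and $\eta_2'$. The two limits in \eqref{eq:C-alpha-as} then drop out of \eqref{eq:sdpt}: as $\alpha\to\infty$ the quantities $\eta_2(\alpha r)$ and $\alpha r\,\eta_2'(\alpha r)$ decay exponentially, so $r^3\to\zeta(3)$ and $G(\alpha)\to r+r/2=\frac{3}{2}\zeta(3)^{1/3}$; as $\alpha\to 0$ the small-argument expansion $\eta_2(z)=\zeta(3)/2-\pi^2 z/24+O\lpa{z^2\log z}$, read off from the Mellin transform $\int_0^\infty z^{s-1}\eta_2(z)\dd z=\Gamma(s)(1-2^{1-s})\zeta(s)\zeta(s+3)$, reduces \eqref{eq:sdpt} to $r^2\sim\pi^2\alpha/24$, giving $G(\alpha)\sim 2r=\pi\sqrt{\alpha/6}$.

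The main technical obstacle is the \emph{uniformity} of this leading expansion of $\log G_m(e^{-t})$ across the entire range $\alpha\gg n^{-1/3}$, and in particular through the critical window $\alpha\asymp 1$, i.e.\ $m\asymp n^{1/3}$, where neither $mt\to 0$ nor $mt\to\infty$ and a naive termwise truncation in either series fails. The cleanest remedy is a Mellin--Barnes treatment in which every intermediate sum is controlled by a short residue expansion with $m$-independent error bounds; this is also what renders the transition between the two regimes in \eqref{eq:C-alpha-as} transparent and provides the template for the sharper results of Section~\ref{sec:pt}.
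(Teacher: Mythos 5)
Your local estimate for $\log G_m(e^{-t})$ takes a genuinely different and more elementary route than the paper. Where the paper derives an exact functional equation for $G_m(e^{-\tau})$ via Mellin transforms and the Hurwitz--zeta functional equation (Theorem~\ref{thm:q-expr-exact}, Propositions~\ref{prop:qm-expr}--\ref{prop:E0-est}) and then extracts the asymptotics of each piece ($\log g_m$, $\kappa_m$, $\lambda_m$) in Section~\ref{sec:asym-constant}, you expand $\log Q_m(e^{-t})$ directly, observe the elegant closed form $\sum_{1\le h<j<m}x^{h+j}=\tfrac12\bigl((x-x^m)^2/(1-x)^2-(x^2-x^{2m})/(1-x^2)\bigr)$, and let the identity $\sum_{\ell\ge1}\frac{1-e^{-\ell u}}{\ell^3(1+e^{-\ell u})}=\zeta(3)-2\eta_2(u)$ do the collapse at $u=mt$. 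Your verification that $G(\alpha)$ is stationary exactly at the root of \eqref{eq:sdpt} and your two limit computations in \eqref{eq:C-alpha-as} are all correct; I also checked that your claimed leading formula and $O(t^{-1})$ error are consistent with the paper's sharper expansion \eqref{eq:log-G-large-rho}.

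However, two steps that you label ``standard'' or ``expected'' are precisely where the paper invests most of its effort, and you cannot wave them away. First, the uniformity of $\log G_m(e^{-t})=\frac{\zeta(3)-2\eta_2(mt)}{2t^2}+O(t^{-1})$ across all $m$ and all $t\to0$ (in particular through $mt\asymp1$) is not established by ``expanding in $\ell t\to0$'': one must bound the replacement error in the $(1-e^{-\ell t})^{-2}$ denominator uniformly over the terms $\ell t\asymp1$ while keeping $m\ell t$ free, and this requires either a careful term-by-term error estimate or, as you yourself note, a Mellin--Barnes argument --- which is essentially the paper's proof. Second, the minor-arc estimate for the lower bound is genuinely nontrivial here: the paper's Section~\ref{sec:just} develops the uniform bounds on $|V_m(e^{-\rho-it})|/V_m(e^{-\rho})$ in \eqref{eq:st} exactly because the decay rate must hold uniformly in $m$ as $m$ ranges from constant to $n$, and this is not a black-box ``partition-type'' fact. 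Both gaps are honest omissions rather than errors, but filling them brings you back into the paper's framework; so while your computation of the leading term is a nice shortcut, the proof as written is incomplete on the two points that carry the actual difficulty.
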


We thus have a combinatorial model that interpolates nicely between
integer partitions and column-strict plane partitions, in the sense of
asymptotic behavior. A very similar looking expression will be 
derived in Section~\ref{sec:m-rowed} for $m$-rowed plane partitions, 
which bridge particularly ordinary partitions and plane partitions. 

The BPPs we study here can be connected to ordinary plane partitions
through the following decomposition. Given a plane partition $g$ of
size $n$, denote by $t = \sum_{i \ge 0} g(i,i)$ its trace. We
separate $g$ by the diagonal $i=j$ for $(i,j) \in \mathbb{N}^2$,
obtaining two BPPs $f_1, f_2$ of sizes $n_1, n_2$ respectively, and
an integer partition on the diagonal, such that $n = n_1 + n_2 + t$.
The weak asymptotics of such a triple $(n_1, n_2, t)$ is bounded
above by
\begin{align*}
    \log G_{n_1, n_1} & + \log G_{n_2,n_2} + \log p_t  \\ 
    &\le \beta_1(n_1^{2/3} + n_2^{2/3}) 
    + \beta_2(n_1^{1/3} + n_2^{1/3}) + O(\sqrt{t}+\log n) \\
    &\le 2^{1/3} \beta_1 n^{2/3} 
    -  2^{-2/3} \beta_1 n^{-1/3} t 
    + 2^{2/3} \beta_2 n^{1/3} + O(\sqrt{t}+\log n),
\end{align*}
with $\beta_1, \beta_2$ defined in \eqref{eq:c-b-b}. The last
inequality uses the concavity of $x \mapsto x^{2/3}$ and the fact
that $(1-x)^{2/3} \le 1 - x/2$ for $0 \le x \le 1$. Since $t = O(n)$,
the dominant term of the last upper bound matches that in
\eqref{eq:pp}. If $t = \omega(n^{2/3})$, the subdominant term will be
negative and of order $\Theta(n^{-1/3}t)$, making the bound
exponentially smaller than \eqref{eq:pp}. The main contribution thus
comes from $t = O(n^{2/3})$. This is consistent with the results in
\cite{Kamenov2007} on the asymptotic normality of $t$, with mean
asymptotic to $c_1 n^{2/3}$ and variance to $c_2n^{2/3}\log n$ for
some explicit constants $c_1$ and $c_2$.

For the method of proofs, we will employ a more classical approach
based on Mellin transforms (see \cite{Flajolet1995}) and saddle-point
method (see \cite{Andrews1976, Flajolet2009, Meinardus1954}), instead
of the elementary approach used in \cite{Han2018,Han2019}, which
becomes cumbersome when finer asymptotic expansions are required. The
analytic approach we adopted, although standard as that presented in
\cite{Andrews1976, Meinardus1954}, which applies for fixed $m$,
becomes more delicate because we address the whole range $1\le m\le
n$, and describing the transitional behaviors in different ``phases"
requires a finer analysis by maintaining particularly the uniformity
of all error terms involved with varying $m$.

Of additional interest here is that, similar to the functional
equation satisfied by the generating function of $p_n$
\begin{align}\label{eq:p-modular}
    P(e^{-\tau})
	:=\sum_{n\ge0}p_ne^{-n\tau}
    = \sqrt{\frac{\tau}{2\pi}}\,
    \exp\Lpa{\frac{\pi^2}{6\tau}-\frac{\tau}{24}}
    P\lpa{e^{-4\pi^2/\tau}}\qquad(\Re(\tau)>0),
\end{align}
(see \cite{Ayoub1963}), we also have the following (non-modular) 
relation satisfied by the generating function of $G_{n,m}$. 

\begin{thm} \label{thm:q-expr-exact}
For $\Re(\tau) > 0$, the function $G_m(e^{-\tau})$ satisfies the 
identity
\begin{align}\label{eq:q-expr-exact}
    G_m(e^{-\tau}) 
    = g_m \sqrt{\tau}\, 
    \exp\Lpa{\frac{\varpi_m}{\tau}+\phi_m\tau}
    K_m\lpa{e^{-4\pi^2/\tau}}L_m\lpa{e^{-4\pi^2/\tau}},
\end{align}
where the constants depending on $m$ are given by 
\begin{equation}\label{eq:f-details}
    \left\{
    \begin{split}
        g_m &:= (2\pi)^{-(m^2-3m+4)/4}
        \prod_{1 \le k< j< m}
        \Gamma\Lpa{\frac{k+j}{2m}},  \\
        \varpi_m &:= \frac{\pi^2}{24}
		\Lpa{m+1+\frac 2m}, \quad
        \phi_m := \frac{m^3-7m+2}{96},
    \end{split}\right.
\end{equation}
and the two functions $K_m$ and $L_m$ by
\begin{equation}\label{eq:XY}
    \left\{
    \begin{split}       
        K_m(z) &:= \sqrt{\frac{P\lpa{z^{1/m}}}
		{P\lpa{z^{1/2}}}}\,P\lpa{z}^{(m+2)/4},\\
        L_m(z) &:= \exp\llpa{ - \frac1{2m}
        \sum_{1 \le \ell <m}
        \frac{\cos\lpa{\frac{(2\ell-1)\pi}{m}}}
        {1-\cos\lpa{\frac{(2\ell-1)\pi}{m}}} \sum_{j \ge 0}
        \frac{z^{j+\frac{2\ell-1}{2m}}}
        {\lpa{j + \frac{2\ell-1}{2m}}
        \lpa{1-z^{j+\frac{2\ell-1}{2m}}}}}.
    \end{split}\right.
\end{equation}
Both $K_m(z)$ and $L_m(z)$ are analytic in $|z|<1$, $z\not\in[-1,0]$. 
\end{thm}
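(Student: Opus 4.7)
Since $G_m(z)=P(z)Q_m(z)$ by \eqref{eq:han-xiong} and $P(e^{-\tau})$ already satisfies \eqref{eq:p-modular}, the theorem reduces to establishing an analogous exact evaluation of $Q_m(e^{-\tau})$; combining the two yields \eqref{eq:q-expr-exact}. My plan is the standard Mellin-transform machinery (cf.~\cite{Flajolet1995,Meinardus1954}), shifting a Barnes-type contour past the poles at $\sigma=1,0,-1$ to extract the polynomial/constant part in $\tau$, and then converting the remainder integral into the $e^{-4\pi^2/\tau}$-series via Hurwitz's functional equation. Set $a(s):=\#\{(h,j):1\le h<j<m,\ h+j=s\}$ for $s\in\{3,\dots,2m-3\}$ (so $\sum_s a(s)=\binom{m-1}{2}$, with palindromic symmetry $a(s)=a(2m-s)$). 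Rearranging the triple sum gives
\[
    \log Q_m(e^{-\tau}) = \sum_{s=3}^{2m-3} a(s)\sum_{\ell\ge1}\frac{e^{-\ell s\tau}}{\ell(1-e^{-2m\ell\tau})},
\]
whose Mellin transform in $\tau$ equals $\Gamma(\sigma)(2m)^{-\sigma}\zeta(\sigma+1)\sum_s a(s)\zeta(\sigma,s/(2m))$, with $\zeta(\cdot,\cdot)$ the Hurwitz zeta; the inverse Mellin integral on $\Re\sigma>1$ represents $\log Q_m(e^{-\tau})$ exactly.

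\textbf{Residues at $\sigma=1,0,-1$.} Shifting the contour leftward, the simple pole of $\zeta(\sigma,s/(2m))$ at $\sigma=1$ contributes a $1/\tau$ term which, combined with $\pi^2/(6\tau)$ from \eqref{eq:p-modular}, assembles into $\varpi_m/\tau$ after evaluating $\sum_s a(s)=(m-1)(m-2)/2$. At $\sigma=0$ the product $\Gamma(\sigma)\zeta(\sigma+1)$ has a double pole, but the $\sigma^{-2}$ coefficient is proportional to $\sum_s a(s)(\tfrac12-s/(2m))$, which vanishes thanks to the elementary identity $\sum_{1\le h<j<m}(h+j)=m(m-1)(m-2)/2$; consequently no $\log\tau$ arises from $Q_m$ (the $\sqrt\tau$ prefactor of \eqref{eq:q-expr-exact} coming solely from $P$'s $\sqrt{\tau/(2\pi)}$), and Lerch's identity $\zeta'(0,a)=\log\Gamma(a)-\tfrac12\log(2\pi)$ delivers exactly $\log g_m$, the $(2\pi)$-exponent $(m^2-3m+4)/4$ matching $\tfrac12(1+\binom{m-1}{2})$. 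The $\Gamma$-pole at $\sigma=-1$, via $\zeta(-1,a)=-B_2(a)/2$, yields the $\tau$-coefficient which, combined with $-\tau/24$ from \eqref{eq:p-modular} and after polynomial evaluation of $\sum_{h<j}B_2((h+j)/(2m))$, simplifies to $\phi_m=(m^3-7m+2)/96$.

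\textbf{Remainder as $K_mL_m$; main obstacle.} For the remainder integral on $\Re\sigma=-1-\delta$, I would substitute Hurwitz's functional equation
\[
    \zeta(\sigma,s/(2m)) = 2(2\pi)^{\sigma-1}\Gamma(1-\sigma)\sum_{n\ge1}n^{\sigma-1}\sin\llpa{\tfrac{\pi\sigma}{2}+\tfrac{\pi ns}{m}},
\]
collapse $\Gamma(\sigma)\Gamma(1-\sigma)=\pi/\sin(\pi\sigma)$ to split the integrand into $\sec(\pi\sigma/2)$ and $\csc(\pi\sigma/2)$ pieces weighted by $\zeta(\sigma+1)$ and by the scale $(\pi n/(m\tau))^\sigma$, and close the contour to the right. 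The residues at positive integer poles of $\sec$ and $\csc$ produce convergent series of $e^{-4\pi^2 n/(m\tau)}$-type exponentials, weighted by the finite Fourier transform $\widehat a(n):=\sum_s a(s)e^{\pi i ns/m}$ on $\mathbb{Z}/2m\mathbb{Z}$; the latter admits a closed form via the elementary identity $\sum_{1\le h<j<m}z^{h+j}=\tfrac12\lpa{(\sum_{k=1}^{m-1}z^k)^2-\sum_{k=1}^{m-1}z^{2k}}$. Grouping by the residue of $n$ modulo $2m$, the contributions from the "trivial" classes ($n\equiv0$ and $n\equiv m\pmod{2m}$, plus even residues) reassemble into the three $P$-scalings $P(z)^{(m+2)/4}\sqrt{P(z^{1/m})/P(z^{1/2})}=K_m(z)$ (with $z=e^{-4\pi^2/\tau}$), while the odd residues $n\equiv 2\ell-1\pmod{2m}$, $\ell=1,\dots,m-1$, give the double sum defining $L_m(z)$, with the trigonometric coefficients $-\tfrac{1}{2m}\cos((2\ell-1)\pi/m)/(1-\cos((2\ell-1)\pi/m))$ emerging from geometric summation in the closed form of $\widehat a(2\ell-1)$. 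The hardest step is this Fourier-combinatorial bookkeeping: identifying precisely which frequency classes produce which $P$-scaling, matching the fractional exponents $(m+2)/4$ and $\pm\tfrac12$ in $K_m$, and verifying the exact cosine weights of $L_m$. Analyticity and non-vanishing of $K_m,L_m$ on $|z|<1$, $z\notin[-1,0]$ then follow routinely from absolute convergence of the defining series/products in the cut disk.
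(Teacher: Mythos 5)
Your proposal follows essentially the same Mellin-transform route as the paper: rewrite $\log Q_m(e^{-\tau})$ as an inverse Mellin integral of $\Gamma(s)\zeta(s+1)(2m)^{-s}\sum_s a(s)\zeta(s,s/(2m))\tau^{-s}$, collect residues at $s=1,0,-1$, and convert the remainder integral into $e^{-4\pi^2/\tau}$-series via Hurwitz's functional equation; your $a(s)$ coincides with the paper's weight $w_m(s)$ (cf.\ \eqref{eq:wmj}), and your residue computations at $s=1,0,-1$ (including the cancellation of the $\log\tau$ term from $Q_m$ and the final $\phi_m$) match the paper's Proposition~\ref{prop:qm-expr}.

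The one place worth flagging is the treatment of the remainder. The paper uses the ``Hurwitz-to-Hurwitz'' form of the functional equation (Equation~\eqref{eq:hurwitz-zeta-eq} with $d=2m$), while you use the Riemann--series form; these are equivalent after splitting the $n$-sum into residue classes mod $2m$, so the two are interchangeable in principle. Likewise your closed form for the Fourier coefficient $\widehat{a}(n)$ via the identity $\sum_{1\le h<j<m}z^{h+j}=\tfrac12\bigl[(\sum_{k<m}z^k)^2-\sum_{k<m}z^{2k}\bigr]$ is exactly the paper's Lemma~\ref{lem:sum-sine} (and, as you note, the palindromic symmetry kills the $\csc$-piece entirely, so only the $\sec$-piece actually survives). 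However, your description of the final step --- ``close the contour to the right'' and collect ``residues at positive integer poles of $\sec$ and $\csc$'' --- is not quite the correct mechanism: the scale $(\pi n/(m\tau))^\sigma$ grows to the right for $|\tau|$ small, so that contour shift diverges, and residues of $\sec(\pi\sigma/2)$ at positive odd integers would produce a divergent power series, not an exponential one. What actually produces the exponential series is the change of variables $s\mapsto -s$ followed by recognition of the Mellin pair $\Gamma(s)\zeta(s)\zeta(1+s,b)\leftrightarrow\sum_{k\ge0}\tfrac{e^{-(k+b)x}}{(k+b)(1-e^{-(k+b)x})}$, as in the paper's identity \eqref{eq:JS}; this is a genuine inversion, not a residue sum. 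You correctly identify the Fourier--combinatorial bookkeeping as the crux, and the residue-class organization by $n\bmod 2m$ (even classes $\to$ $K_m$, odd $\to$ $L_m$) is exactly what the paper does, so once the Mellin-inversion mechanism is stated correctly your outline would complete as intended.
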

The expression \eqref{eq:q-expr-exact} is complicated but exact, and 
is the basis of our saddle-point analysis for characterizing the  
asymptotic behaviors of $G_{n,m}$. It is derived by Mellin transforms 
and the functional equation for the Hurwitz zeta function; see 
\cite[\S 12.9]{Apostol1976}. Note that 
\[
    Q_3(z) = \prod_{k\ge0}\frac1{1-z^{6k+3}}
    = \frac{P(z^3)}{P(z^6)}
    =\prod_{k\ge1}\lpa{1+z^{3k}},
\]
so we also have, by \eqref{eq:p-modular}, the functional equation
\[
    Q_3(e^{-\tau})
    = \frac{e^{\pi^2/(36\tau)+\tau/8}}
    {\sqrt{2}\,Q_3\lpa{e^{-2\pi^2/(9\tau)}}}.
\]
No such equation is available for higher $Q_m(z)$ with $m\ge4$. On 
the other hand, the sequence $G_{n,3}$ coincides with 
\href{https://oeis.org/A266648}{A266648} in OEIS \cite{oeis2019}. 

The rest of this paper is structured as follows. The exact expression
of $G_m$ in Theorem~\ref{thm:q-expr-exact} is first proved in the
next section. Then we turn to the asymptotics of $G_m$ in
Section~\ref{sec:asym-constant}. A uniform asymptotic approximation
to $G_{n,m}$ is then derived in Section~\ref{sec:Gnm}, which is used
in Section~\ref{sec:pt} to characterize the more precise behaviors of
$G_{n,m}$ in each of the three phases: sub-critical, critical and
super-critical. We then extend the same approach in
Section~\ref{sec:m-rowed} to $m$-rowed plane partitions, together
with two other similar variants.

\textbf{Notations.} Since $Q_m(z)=1$ for $m\le2$, we assume 
$m\ge3$ throughout this paper. The symbols $c,c',\beta$ and $c_j, 
\beta_j$ are generic whose values will always be locally specified. 
Other symbols are global except otherwise defined (e.g., in 
Section~\ref{sec:m-rowed}). 

\section{Exact expression for $G_{m}(e^{-\tau})$: proof of Theorem~\ref{thm:q-expr-exact}} \label{sec:exact-expr}

In this section, we will prove Theorem~\ref{thm:q-expr-exact} for the
exact expression \eqref{eq:q-expr-exact} for $G_{m}(e^{-\tau})$ by
Mellin transforms. We start with rewriting $Q_m(z)$ in
\eqref{eq:han-xiong} as
\begin{align}\label{eq:q}
    Q_m(z) = \prod_{k \ge 0} \,
	\prod_{1 \le j < 2m} 
	\left(\frac1{1-z^{2mk+j}}\right)^{w_m(j)}, 
\end{align}
where
\begin{align}\label{eq:wmj}
    w_m(j) 
	\defeq \Bigl\lfloor 
	\frac{m - 1 - |m - j|}{2} \Bigr\rfloor
	\qquad(1\le j<2m).
\end{align}
For convenience, the $k$th moment of $w_m$ is denoted by $\mu_k(w_m)$:
\[
	\mu_k = \mu_k(w_m) 
	\defeq \sum_{1 \le j < 2m} j^k w_m(j)
    \qquad(k\in\mathbb{N}).
\]
By considering the parity of $j$ and $m$, we deduce that 
\begin{align}\label{eq:Wmz}
	W_m(z) 
	:= \sum_{1\le j<2m}w_m(j) z^j
	= \frac{z^3(1-z^{m-1})(1-z^{m-2})}{(1+z)(1-z)^2}
	\qquad(m\ge3).
\end{align}
From this expression, it is straightforward to compute the first few 
moments $\mu_k = k![s^k]W_m(e^s)$, as given explicitly in 
Table~\ref{tab:mu}. 
\begin{table}[!ht]\def\arraystretch{2}
    \begin{center}\small
    \begin{tabular}{cccc}
    $\mu_0$ & $\mu_1$ & $\mu_2$ & $\mu_3$\\[-4pt] \hline
    $\displaystyle\frac{(m-1)(m-2)}{2}$ & 
    $\displaystyle\frac{m(m-1)(m-2)}{2}$ &
    $\displaystyle\frac{m(m-1)(m-2)(7m-3)}{12}$ &
    $\displaystyle\frac{3m^2(m-1)^2(m-2)}{4}$\\
    \end{tabular}    
    \end{center}
    \caption{The exact expressions of $\mu_k$ for $0\le k\le 3$.}
    \label{tab:mu}
\end{table}

Since all singularities of $G_m(z)$ lie on the unit circle, we
consider the change of variables $z = e^{-\tau}$ and examine the
behavior of $G_m(e^{-\tau})$ in the half-plane $\Re(\tau)>0$. For 
that purpose, let 
\[
    \zeta(s,b) := \sum_{k\ge0}(k+b)^{-s}
	\qquad(\Re(s)>1,b>0)
\]
denote the Hurwitz zeta function. In addition to Mellin transforms,
we need some properties of $\zeta(s, b)$ and the Gamma function 
$\Gamma(s)$; see, for example, \cite[Ch.~12]{Apostol1976},
\cite[Ch.~1]{Erdelyi1953} or \cite[Chs. XII \& XIII]{Whittaker1996}.
Since $P(e^{-\tau})$ satisfies \eqref{eq:p-modular}, we need only
derive a similar expression for $Q_m(e^{-\tau})$ in order to prove
\eqref{eq:q-expr-exact}.

\begin{prop} \label{prop:qm-expr}
For $\Re(\tau)>0$, $q_m(e^{-\tau})\defeq \log Q_m(e^{-\tau})$ 
satisfies 
\begin{align} 
    \begin{split} \label{eq:f-expansion}
        q_m(e^{-\tau}) 
        &= \frac{(m-1)(m-2)\pi^2}{24m\tau} 
        + \sum_{1 \le j < 2m} w_m(j)
        \log\Gamma\Lpa{\frac{j}{2m}}\\
        &\qquad- \frac{(m-1)(m-2)}{4} \log(2\pi) 
        + \frac{(m-1)(m-2)(m+3)}{96}\,\tau
        + E(\tau),
    \end{split}
\end{align}
where $E(\tau)$ is given by 
\begin{equation}\label{eq:E0-def}
    E(\tau) = E(m;\tau) 
    \defeq \frac1{2\pi i}\int_{(-2)} 
    \Gamma(s) \zeta(s+1) 
    \mathscr{M}_m(s) \tau^{-s}\dd s,
\end{equation}
with $\int_{(c)}$ representing $\int_{c-i\infty}^{c+i\infty}$ and 
\begin{equation} \label{eq:mstar-def}
    \mathscr{M}_m(s) 
    \defeq (2m)^{-s} \sum_{1 \le j \le 2m} 
    w_m(j) \zeta\Lpa{s, \frac{j}{2m}}.
\end{equation} 
\end{prop}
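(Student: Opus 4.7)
The plan is to apply the Mellin transform technique to $q_m(e^{-\tau}) = \log Q_m(e^{-\tau})$. Starting from the product form \eqref{eq:q} and using $-\log(1-x) = \sum_{\ell\ge 1} x^\ell/\ell$, I first write
\[
    q_m(e^{-\tau})
    = \sum_{\ell \ge 1} \frac{1}{\ell}
    \sum_{k \ge 0} \sum_{1 \le j < 2m}
    w_m(j)\, e^{-\ell(2mk+j)\tau}.
\]
Since the Mellin transform of $e^{-\lambda\tau}$ in $\tau$ equals $\Gamma(s)\lambda^{-s}$, pulling the sums inside and recognizing that $\sum_{\ell\ge 1}\ell^{-s-1} = \zeta(s+1)$ and $\sum_{k\ge 0}(2mk+j)^{-s} = (2m)^{-s}\zeta(s,j/(2m))$ yields, for $\Re(s) > 1$, the Mellin transform $\Gamma(s)\zeta(s+1)\mathscr{M}_m(s)$. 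By Mellin inversion, for any $c>1$,
\[
    q_m(e^{-\tau})
    = \frac{1}{2\pi i}\int_{(c)}
    \Gamma(s)\,\zeta(s+1)\,\mathscr{M}_m(s)\,\tau^{-s}\dd s.
\]

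Next I shift the contour from $(c)$ to $(-2)$. Absolute convergence and the shift are justified by the exponential decay $|\Gamma(\sigma+it)| = O\lpa{|t|^{\sigma-1/2} e^{-\pi|t|/2}}$ from Stirling's formula, together with the polynomial growth of $\zeta(s+1)$ and of $\zeta(s,j/(2m))$ in vertical strips. Crossing the strip $-2 < \Re(s) < c$, I pick up poles at $s=1$ (from the Hurwitz zeta $\zeta(s,j/(2m))$), at $s=0$ (from $\Gamma(s)$ and $\zeta(s+1)$), and at $s=-1$ (from $\Gamma(s)$). The residue at $s=1$ is immediate: since $\Res_{s=1}\zeta(s,b)=1$ for any $b$, summing $w_m(j)$ yields $\mu_0=(m-1)(m-2)/2$, and multiplication by $\Gamma(1)\zeta(2)(2m)^{-1}\tau^{-1} = \pi^2/(12m\tau)$ gives precisely the first term in \eqref{eq:f-expansion}. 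The residue at $s=-1$ is a simple pole using $\Res_{s=-1}\Gamma(s)=-1$, $\zeta(0)=-1/2$, and $\zeta(-1,b) = -\tfrac{1}{2}(b^2-b+\tfrac{1}{6})$; evaluating $\mathscr{M}_m(-1) = -\mu_2/(4m) + \mu_1/2 - m\mu_0/6$ with the entries of Table~\ref{tab:mu} yields $(m-1)(m-2)(m+3)/48$, which after the prefactor $(-1)(-1/2)\tau$ matches the $\tau$-linear term in \eqref{eq:f-expansion}.

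The delicate point, and the main obstacle, lies in the pole at $s=0$: a priori both $\Gamma(s)$ and $\zeta(s+1)$ contribute a simple pole, producing a double pole. The key observation is that $\mathscr{M}_m(0)$ vanishes. Indeed, using $\zeta(0,b) = \tfrac{1}{2}-b$, one has
\[
    \mathscr{M}_m(0)
    = \sum_{1\le j<2m} w_m(j)\Lpa{\tfrac{1}{2} - \tfrac{j}{2m}}
    = \tfrac{\mu_0}{2} - \tfrac{\mu_1}{2m},
\]
which equals zero because $\mu_1 = m\mu_0$ (visible from Table~\ref{tab:mu}). Consequently the double pole collapses to a simple one, and its residue is $\mathscr{M}_m'(0)$. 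Using the Lerch formula $\partial_s\zeta(s,b)\bigr|_{s=0} = \log\Gamma(b) - \tfrac{1}{2}\log(2\pi)$, and again that $\mathscr{M}_m(0)=0$ so the $-\log(2m)$ term drops, I obtain
\[
    \mathscr{M}_m'(0)
    = \sum_{1\le j<2m} w_m(j)\log\Gamma\Lpa{\tfrac{j}{2m}}
    - \tfrac{(m-1)(m-2)}{4}\log(2\pi),
\]
matching the second and third terms of \eqref{eq:f-expansion}.

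Combining the three residues with the remainder integral along $\Re(s)=-2$, which is precisely $E(\tau)$ as defined in \eqref{eq:E0-def}, yields \eqref{eq:f-expansion}. The main technical care required is the Laurent expansion at $s=0$: the coefficients of $1/s^2$ and $1/s$ in $\Gamma(s)\zeta(s+1)$ near $s=0$ must be tracked (the $1/s$ coefficient actually vanishes, as the Laurent expansions of $\Gamma$ and $\zeta$ at their poles have opposite constant terms $\mp\gamma$), so that after multiplication by $\mathscr{M}_m(s)=\mathscr{M}_m'(0)s+O(s^2)$ and $\tau^{-s}=1-s\log\tau+O(s^2)$, the $1/s$ coefficient is cleanly $\mathscr{M}_m'(0)$, with no residual $\log\tau$ contamination.
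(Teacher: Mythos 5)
Your proof is correct and follows essentially the same route as the paper: Mellin-transform $q_m(e^{-\tau})$, shift the contour from $\Re(s)=c>1$ to $\Re(s)=-2$, collect the three residues at $s=1,0,-1$, and identify the remainder integral with $E(\tau)$. Your residue computations check out against Table~\ref{tab:mu}, and your explicit observation that $\mathscr{M}_m(0)=\tfrac{\mu_0}{2}-\tfrac{\mu_1}{2m}=0$ (equivalently, $\mu_1=m\mu_0$, a direct consequence of the symmetry $w_m(j)=w_m(2m-j)$) is a genuine clarification of what the paper leaves implicit: it is this cancellation that collapses the apparent double pole at $s=0$ to a simple one and kills the would-be $\log\tau$ term. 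The only point you omit is the observation, made in the paper, that $\mathscr{M}_m(-2)=0$ (and more generally $\mathscr{M}_m(-2j)=0$ via $\zeta(-2j,x)=-B_{2j+1}(x)/(2j+1)$ and the antisymmetry of odd Bernoulli polynomials about $x=\tfrac12$); this is needed to justify that the shifted contour at $\Re(s)=-2$ does not pass through a pole of $\Gamma(s)$, so that $E(\tau)$ is well defined. This is a small technical gap rather than a flaw in the strategy.
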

\begin{proof}
Let $\mathscr{M}_m^{[q]}(s)$ be the Mellin transform of
$q_m(e^{-\tau})$. Then $\mathscr{M}_m^{[q]}(s) = \Gamma(s) \zeta(s+1)
\mathscr{M}_m(s)$ for $\Re(s)>1$, where $\mathscr{M}_m(s)$ is defined
in \eqref{eq:mstar-def}. By the inverse Mellin transform, we have
\begin{align}\label{eq:mellin-main}
	q_m(e^{-\tau})
	=\frac1{2\pi i}\int_{(r)}\mathscr{M}_m^{[q]}(s) \tau^{-s} \dd s
	\qquad(r>1).
\end{align}
We will move the line of integration to the left, so as to include
the leftmost pole at $s=-1$, and collect all the residues of the
poles encountered. For that purpose, we need the growth properties of
the integrand at $c\pm i\infty$ to ensure the absolute convergence of
the integral.

By the known estimate for Gamma function (see 
\cite[\S 1.18]{Erdelyi1953})
\[
    |\Gamma(c+it)| = O\lpa{|t|^{c-1/2} 
    e^{-\pi|t|/2}},\qquad(c\in\mathbb{R}, |t|>1),
\]
and that for Hurwitz zeta function (see \cite[\S 
13.51, p.\ 276]{Whittaker1996})
\begin{equation} \label{eq:zeta-bound}
	\begin{split}
	    |\zeta(c+it, b)| = O\lpa{|t|^{\nu_0(c)}\log |t|},
		\;\text{ with }\;
		\nu_0(c):=
		\begin{dcases}
			\tfrac12-c, &\text{if }c<0;\\
			\tfrac12, &\text{if }c\in[0,\tfrac12];\\
			1-c, &\text{if } c\in[\tfrac12,1];\\
			0, &\text{if }c>1,
		\end{dcases}
	\end{split} 
\end{equation}
for $|t|>1$, we have
\begin{equation}
	|\mathscr{M}_m^{[q]}(c+it)\tau^{-s}| 
	= O\lpa{m^{2-c} |t|^{\nu(c)}(\log|t|)^2
	e^{-\frac{\pi}{2}|t| + t\arg(\tau)}},
\end{equation}
for $c\in\mathbb{R}, |t|>1$, where
\[
	\nu(c) := \begin{dcases}
		\tfrac12+|c|, 
		& \text{if }|c-\tfrac12|\ge\tfrac12;\\
		\min\{\tfrac12+c,\tfrac32-c\}, 
		&\text{if }|c-\tfrac12|\le \tfrac12.
	\end{dcases}
\]

\begin{center}
	\begin{tabular}{cc}
		\includegraphics[width=4.5cm]{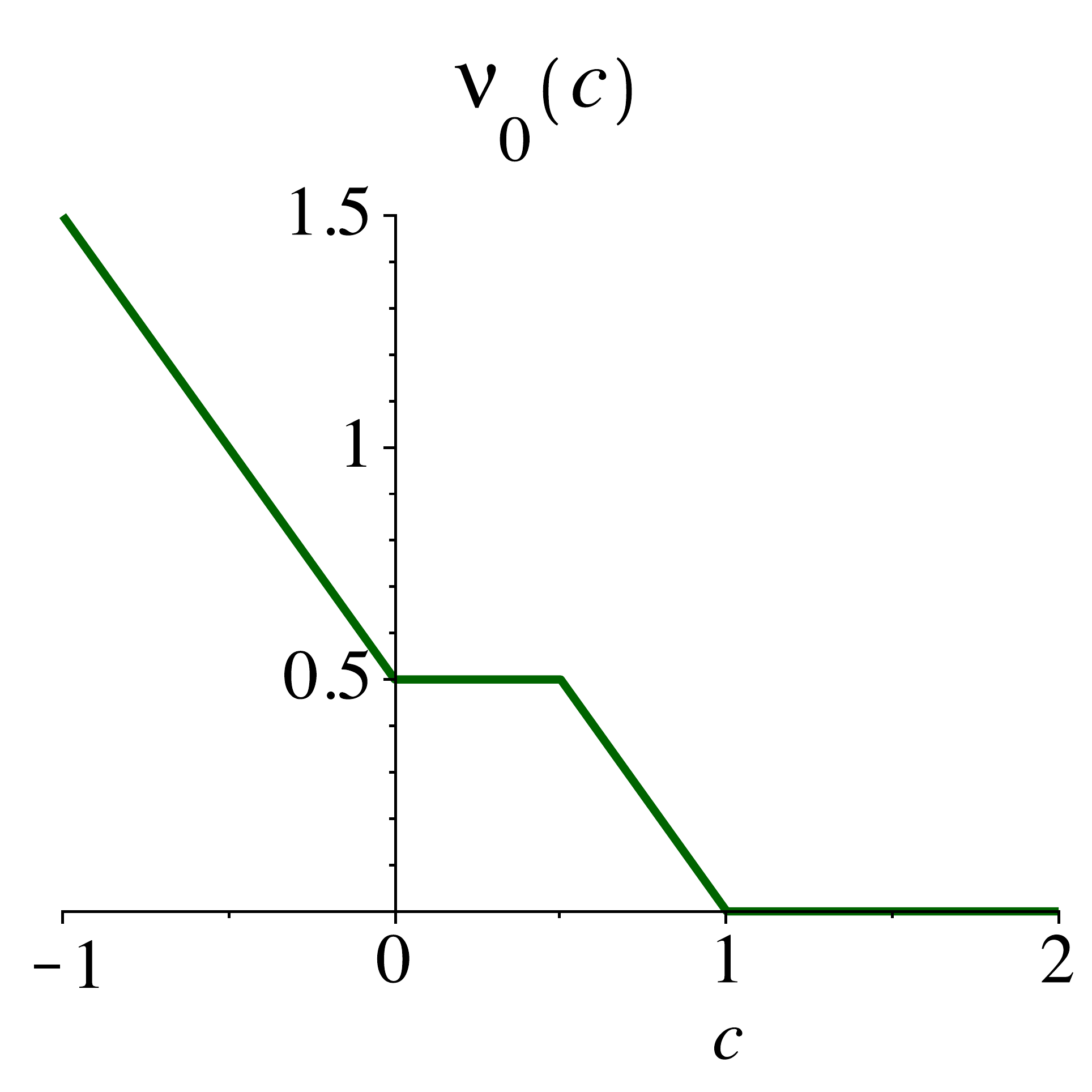} &
		\includegraphics[width=4.5cm]{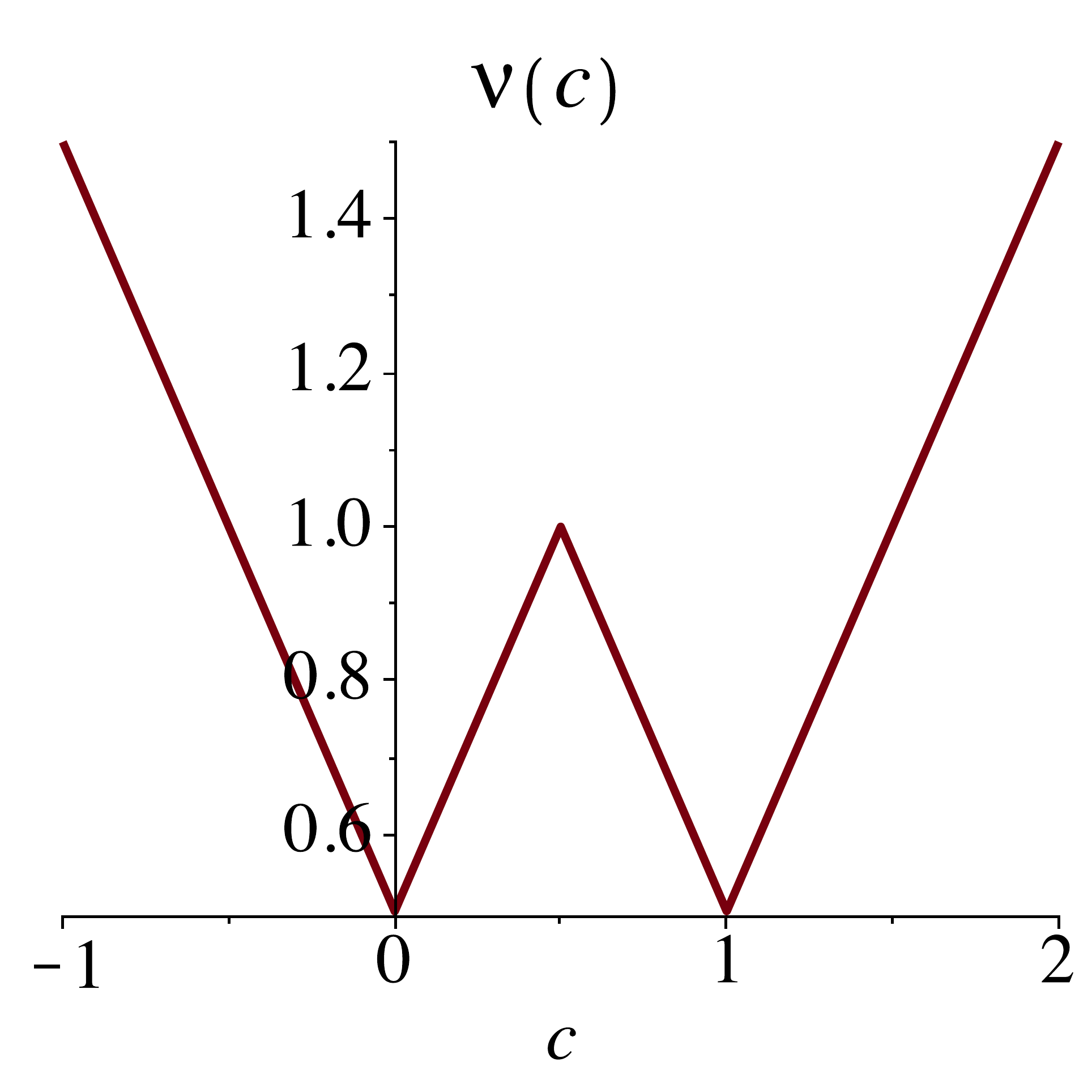}
		\\
		$\nu_0(c)$ & $\nu(c)$
	\end{tabular}
\end{center}
Thus the integral in \eqref{eq:mellin-main} is absolutely convergent
as long as $|\arg(\tau)| \le \pi/2 - \ve$, and this justifies the
analytic properties we need for summing the residues, which we now
compute. Since $w_m(j) = w_m(2m-j)$ (see \eqref{eq:wmj}), we can
rewrite \eqref{eq:mstar-def} as
\begin{equation} \label{eq:mstar}
	\mathscr{M}_m(s) 
	= \sum_{1 \le j <m} w_m(j) 
	\Lpa{\zeta\Lpa{s, \frac{j}{2m}} 
	+ \zeta\Lpa{s, 1-\frac{j}{2m}}}
	+ w_m(m) \zeta\Lpa{s,\frac1{2}}.
\end{equation}
Observe that $\mathscr{M}_m(-2j)=0$ for $j\in\mathbb{Z}^+$ because
$\zeta(-2j, x) = -B_{2j+1}(x)/(2j+1)$, where $B_{2j+1}(x)$ is the
Bernoulli polynomial of order $2j+1$:
\begin{align}\label{eq:bern-poly}
	B_j(x) := j![z^j]\frac{ze^{xz}}{e^z-1},
\end{align}
which satsfies $B_{2j+1}(x) = -B_{2j+1}(1-x)$; see
\cite[\S~1.13]{Erdelyi1953}. On the other hand, $\zeta(s+1)=0$ when
$s<-1$ is odd. Thus the only poles of the integrand in
\eqref{eq:mellin-main} are $s=1$ (simple), $s=0$ (double) and $s=-1$
(simple); this similarity to that of $\log P(e^{-\tau})$ suggests the
possibility of the identity \eqref{eq:q-expr-exact}.

From these properties, it follows that
\begin{align}\label{eq:q-expansion-res}
	\begin{split}
		q_m(e^{-\tau}) 
		= \sum_{-1\le j\le 1}\Res_{s=j}
        \lpa{\Gamma(s) \zeta(s+1) 
        \mathscr{M}_m(s)\tau^{-s}}+ E(\tau),
	\end{split}
\end{align}
where $E(\tau)$ is as defined in \eqref{eq:E0-def}. By the local 
expansions of $\Gamma(s)$, $\zeta(s+1)$ and $\zeta(s,b)$ for $s\sim0$ 
(see \cite{Erdelyi1953}):
\begin{align*}
	\Gamma(s) 
	&= \frac1{s} - \gamma + O(|s|), \quad \zeta(s+1) 
	= \frac1{s} + \gamma + O(|s|), \\
	\zeta(s, b) 
	&= \frac1{2} - b+ \Lpa{\log\Gamma(b) 
	- \frac1{2} \log(2\pi)} s + O(|s|^2),
\end{align*}
where $\gamma$ is the Euler-Mascheroni constant, we then have
\[
    \begin{split} 
        q_m(e^{-\tau}) 
        = \frac{\pi^2 \mu_0}{12m\tau} 
        &+ \sum_{1 \le j < 2m} w_m(j)
        \log\Gamma\Lpa{\frac{j}{2m}}
        - \frac{\mu_0}{2} \log(2\pi) 
        + \left( - \frac{\mu_2}{8m} 
        + \frac{\mu_1}{4} - \frac{m\mu_0}{12}\right) \tau 
        + E(\tau).
    \end{split}
\]
This, together with the expressions in Table~\ref{tab:mu}, proves 
\eqref{eq:mellin-main}.
\end{proof}

We now evaluate $E(\tau)$, beginning with a simple lemma.

\begin{lem} \label{lem:sum-sine}
For integers $m > 1$, $1 \le \ell \le 2m$ and real number $\theta$, 
we have
\[
	\sum_{1\le k < j <m}
	\sin\Lpa{\theta + \frac{\ell(k+j)\pi}{m}} 
	= \sin \theta \times
	\begin{dcases}
		\binom{m-1}{2}, & \text{for } \ell = 2m; \\
		-\Bigl\lfloor \frac{m-1}2 \Bigr\rfloor, 
		& \text{for } \ell = m; \\
		1, & \text{for } 1 \le \ell < 2m; 
		\ell \neq m \text{ and } \ell \text{ even;} \\
		-\frac{\cos(\ell\pi/m)}{1-\cos(\ell\pi/m)}, 
		& \text{for } 1 \le \ell < 2m, \ell \neq m \text{ and } 
		\ell \text{ odd.}
	\end{dcases}
\]
\end{lem}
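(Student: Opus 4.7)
The plan is to isolate the dependence on $\theta$ by the addition formula
\[
\sin\Lpa{\theta+\tfrac{\ell(k+j)\pi}{m}}
= \sin\theta\cos\Lpa{\tfrac{\ell(k+j)\pi}{m}}
+ \cos\theta\sin\Lpa{\tfrac{\ell(k+j)\pi}{m}},
\]
and then treat the two resulting double sums separately. The key observation is that the $\cos\theta$ part vanishes identically: the involution $(k,j)\mapsto(m-j,m-k)$ is a bijection of the index set $\{1\le k<j<m\}$ onto itself sending $k+j$ to $2m-(k+j)$, and since $\sin\bigl(\ell(2m-(k+j))\pi/m\bigr)=-\sin\bigl(\ell(k+j)\pi/m\bigr)$, the sum is equal to its own negative. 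This reduces the lemma to evaluating
\[
S(\ell)\defeq \sum_{1\le k<j<m}\cos\Lpa{\tfrac{\ell(k+j)\pi}{m}}.
\]

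To compute $S(\ell)$, I would write $\omega\defeq e^{i\ell\pi/m}$, so $S(\ell)=\Re\,T(\ell)$ with
\[
T(\ell)\defeq \sum_{1\le k<j<m}\omega^{k+j}
= \tfrac12\Lpa{\Bigl(\sum_{k=1}^{m-1}\omega^k\Bigr)^{\!2} - \sum_{k=1}^{m-1}\omega^{2k}}.
\]
Each of the two inner geometric sums is evaluated using $\sum_{k=1}^{m-1}x^k=(x-x^m)/(1-x)$ for $x\ne1$, and one then distinguishes cases according to whether $\omega^m=\pm 1$ (i.e., the parity of $\ell$) and whether $\omega^2=1$ (i.e., $\ell=m$). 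For $\ell=2m$ both sums trivialise ($\omega=1$) and $T(\ell)=\binom{m-1}{2}$; for $\ell=m$ one has $\omega=-1$, giving $\omega^{k+j}=(-1)^{k+j}$ and a brief parity check yields $-\lfloor(m-1)/2\rfloor$ uniformly in $m$; for even $\ell\ne 2m$ we have $\omega^m=1$ with $\omega\ne 1$ and $\omega^2\ne 1$, so both inner sums equal $-1$, giving $T(\ell)=1$; and for odd $\ell\ne m$ we have $\omega^m=-1$, whence $\sum_{k=1}^{m-1}\omega^k=(1+\omega)/(1-\omega)$ while $\sum_{k=1}^{m-1}\omega^{2k}=-1$.

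The only step that requires a small computation is the odd case: after substitution one gets
\[
T(\ell) = \frac{1+\omega^2}{(1-\omega)^2},
\]
which, using $1-\omega=-2ie^{i\ell\pi/(2m)}\sin(\ell\pi/(2m))$ and $1+\omega^2=2e^{i\ell\pi/m}\cos(\ell\pi/m)$, simplifies to the real quantity $-\cos(\ell\pi/m)/(1-\cos(\ell\pi/m))$, matching the claimed value. Since $T(\ell)$ is real in every case, $S(\ell)=T(\ell)$, and the lemma follows upon multiplying by $\sin\theta$.

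The proof is essentially a routine computation; the only mild obstacle is the bookkeeping to ensure that the reductions $\omega\ne 1$ and $\omega^2\ne 1$ are legitimate in the two geometric sums, which is precisely what forces the case split $\ell\in\{m,2m\}$ versus the generic values, and the parity split in the generic regime.
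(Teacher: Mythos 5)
Your proposal is correct and follows essentially the same route as the paper: the heart of both arguments is the identity expressing the antidiagonal double sum $\sum_{k<j}\omega^{k+j}$ in terms of $\bigl(\sum_k\omega^k\bigr)^2$ and $\sum_k\omega^{2k}$, followed by a case analysis on $\omega^m$ and $\omega^2$. Your additional preliminary step (separating the $\cos\theta$ contribution via the addition formula and killing it with the involution $(k,j)\mapsto(m-j,m-k)$) is a nice way to make explicit why the answer is a pure multiple of $\sin\theta$, which the paper's sketch leaves implicit, but it is not a different method.
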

\begin{proof}(Sketch) We consider the identity
\[
    \llpa{\sum_{1\le k <m} 
    \exp\Lpa{\frac{k\ell \pi i}{m}}}^2
    = 2\sum_{1 \le k < j <m} 
	\exp\Lpa{\frac{(k+j) \ell \pi i}{m}}
    \times \sum_{1 \le k <m} 
	\exp\Lpa{\frac{2k\ell \pi i}{m}},
\]
and perform straightforward simplifications in each case. 
\end{proof}

We now compute the error term $E(\tau)$. Let $p(z) := \log P(z)$. 
\begin{prop} \label{prop:E0-est}
The error term $E(\tau)$ defined in (\ref{eq:E0-def}) satisfies
\[
	E(\tau) = \kappa_m\lpa{e^{-4\pi^2/\tau}} 
	- p\lpa{e^{-4\pi^2/\tau}}
	+ \lambda_m\lpa{e^{-4\pi^2/\tau}} ,
\]
for $\Re(\tau)>0$, where ($K_m, L_m$ defined in \eqref{eq:XY})
\begin{align}
	\kappa_m(z) &:= \log K_m(z)
	=\frac{m+2}{4} p(z) 
	+ \frac1{2}p\lpa{z^{1/m}} 
	- \frac1{2}p\lpa{z^{1/2}}
	\label{eq:E02},\\ 
	\lambda_m(z) &:= \log L_m(z)
	= - \frac1{2m} \sum_{1 \le \ell <m} 
	\frac{\cos\lpa{\frac{(2\ell-1)\pi}{m}}}
	{1-\cos\lpa{\frac{(2\ell-1)\pi}{m}}} 
	\sum_{k \ge 0} 
	\frac{z^{k + \frac{2\ell-1}{2m}}}{\lpa{k + 
	\frac{2\ell-1}{2m}}
	\lpa{1-z^{k + \frac{2\ell-1}{2m}}}} 
	\label{eq:E01}.
\end{align}
\end{prop}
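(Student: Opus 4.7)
The plan is to evaluate the Mellin--Barnes integral defining $E(\tau)$ by invoking the Hurwitz functional equation so as to swap the roles of $\tau$ and $4\pi^2/\tau$: the integrand, originally suited to expansion as $\tau\to\infty$, will then be recognisable as a Lambert-type series in $q := e^{-4\pi^2/\tau}$ that matches $\kappa_m(q) - p(q) + \lambda_m(q)$ term by term.

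First I would apply the Hurwitz functional equation
\[
\zeta(s,a) \;=\; \frac{2\Gamma(1-s)}{(2\pi)^{1-s}}\sum_{n\ge 1}\frac{\sin(2\pi na + \pi s/2)}{n^{1-s}}
\]
to each $\zeta(s, j/(2m))$ appearing in $\mathscr{M}_m(s)$. Because $w_m(j)$ equals the number of pairs $(h,k)$ with $1\le h<k<m$ and $h+k = j$, the weighted inner sum $\sum_j w_m(j)\sin(\pi nj/m + \pi s/2)$ collapses to $\sum_{1\le h<k<m}\sin(\pi n(h+k)/m + \pi s/2)$, which by Lemma~\ref{lem:sum-sine} factors as $A_m(n)\sin(\pi s/2)$, where $A_m$ depends only on $n\bmod 2m$ and takes the four explicit values listed in the lemma.

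Combining the reflection identity $\Gamma(s)\Gamma(1-s)\cdot 2\sin(\pi s/2) = \pi/\cos(\pi s/2)$ with the Riemann functional equation $\zeta(s+1) = 2^{s+1}\pi^s\cos(\pi s/2)\Gamma(-s)\zeta(-s)$ cancels the remaining trigonometric factors, leaving an integrand of the form $\sum_n \tfrac{A_m(n)}{n}\Gamma(-s)\zeta(-s)\lpa{2\pi^2 n/(m\tau)}^s$. The substitution $s\mapsto -s$ reverses the contour to $\Re(s)=2$ (justified by the absolute-convergence bounds already established in the proof of Proposition~\ref{prop:qm-expr}), and the classical Mellin identity $\frac{1}{2\pi i}\int_{(c)}\Gamma(s)\zeta(s)x^{-s}\,ds = 1/(e^x-1)$ for $c>1$ yields the closed form
\[
E(\tau) \;=\; \sum_{n\ge 1}\frac{A_m(n)}{n}\cdot\frac{q^{n/(2m)}}{1-q^{n/(2m)}}.
\]

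It remains to split this series according to the residue $j$ of $n$ modulo $2m$ and to identify the three pieces. The odd-residue contributions carry coefficient $-\cos(j\pi/m)/(1-\cos(j\pi/m))$, which matches the weight in the definition of $\lambda_m(q)$, so they reproduce $\lambda_m(q)$ directly. The remaining ``even-residue'' contributions, together with the exceptional values $A_m = \binom{m-1}{2}$ at $j\equiv 0$ and $A_m = -\lfloor(m-1)/2\rfloor$ at $j\equiv m$, are converted via the Lambert identities $\sum_{i=1}^{m-1}\sum_{k\ge 0}q^{k+i/m}/((k+i/m)(1-q^{k+i/m})) = m\,p(q^{1/m}) - p(q)$ and $\sum_{k\ge 0}q^{k+1/2}/((k+1/2)(1-q^{k+1/2})) = 2p(q^{1/2}) - p(q)$ into an explicit combination of $p(q)$, $p(q^{1/2})$ and $p(q^{1/m})$; a direct coefficient check then matches this to $\kappa_m(q) - p(q) = \tfrac{m+2}{4}p(q) + \tfrac12 p(q^{1/m}) - \tfrac12 p(q^{1/2}) - p(q)$. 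The main obstacle is the parity-dependent bookkeeping at the exceptional residue $j=m$: since $j=m$ is even or odd according to the parity of $m$, the grouping of residues into ``even'' and ``odd'' buckets behaves differently in the two cases, and the explicit value $A_m(m) = -\lfloor(m-1)/2\rfloor$ supplies the precise compensation needed for the rational coefficients of $p(q)$ and $p(q^{1/2})$ to agree with the statement in both parities.
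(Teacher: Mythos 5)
Your proposal follows essentially the same route as the paper: apply the Hurwitz functional equation to $\mathscr{M}_m(s)$, collapse the weighted sine sum via Lemma~\ref{lem:sum-sine}, combine Euler's reflection formula with the Riemann zeta functional equation, substitute $s\mapsto -s$, and invert the Mellin transform. The only difference is organizational: the paper first regroups the even-residue Hurwitz zetas into a Riemann-zeta term $c(m,s)\zeta(1-s)$ via the identity $\sum_{\ell\le d}\zeta(s,\ell/d)=d^s\zeta(s)$ and then inverts piecewise through $J(b,\tau)$, whereas you invert first to a single Lambert-type series and regroup afterward using the equivalent Lambert identities for $p(q^{1/m})$ and $p(q^{1/2})$; the parity-dependent correction at $j=m$ that you flag is exactly what the paper's pre-inversion simplification absorbs into $c(m,s)$.
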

\begin{proof}
We first rewrite the single-sum relation \eqref{eq:mstar-def} for  
$\mathscr{M}_m(s)$ as a double sum:
\[
    \mathscr{M}_m(s) 
	= (2m)^{-s} \sum_{1 \le h < j <m} 
	\zeta\Lpa{s, \frac{h+j}{2m}}.
\]
Combining this with the functional equation for the Hurwitz zeta 
function (see \cite[\S 12.9]{Apostol1976})
\begin{equation}\label{eq:hurwitz-zeta-eq}
	\zeta\Lpa{s,\frac{j}{d}} 
	= \frac{2\Gamma(1-s)}{(2d\pi)^{1-s}} 
	\sum_{1 \le \ell \le d} 
	\sin\Lpa{\frac{\pi s}{2} + \frac{2\ell j \pi}{d}}
	\zeta\Lpa{1-s, \frac{\ell}{d}}\qquad(d=1,2,\dots),
\end{equation}
we then have
\[
	\mathscr{M}_m(s) 
	= \frac{\Gamma(1-s)}{m(2\pi)^{1-s}} 
	\sum_{0\le \ell \le 2m}
	\zeta\Lpa{1-s, \frac{\ell}{2m}}
	\sum_{1 \le k < j<m} \sin\Lpa{\frac{\pi s}{2} 
	+ \frac{\ell (k+j)\pi}{m}} .
\]
Now, by Lemma~\ref{lem:sum-sine}, the sum above can be reduced to
\begin{align*}
	\mathscr{M}_m(s) 
	&= \frac{\Gamma(1-s)}{m(2\pi)^{1-s}} 
	\sin\left(\frac{\pi s}{2}\right)
	\bigg[ \binom{m-1}{2} \zeta(1-s) 
	- \Bigl\lfloor \frac{m-1}{2} \Bigr\rfloor 
	\zeta\Lpa{1-s,\frac1{2}} \\
	&\quad + \sum_{1 \le \ell < m, 2\ell \neq m} 
	\zeta\Lpa{1-s, \frac{\ell}{m}}
	- \sum_{1\le\ell<m, 2\ell-1\neq m} 
	\frac{\cos\lpa{\frac{(2\ell-1)\pi}{m}}}
	{1-\cos\lpa{\frac{(2\ell-1)\pi}{m}}} \,
	\zeta\Lpa{1-s, \frac{2\ell-1}{2m}} \bigg].
\end{align*}
Then, by the relation
\begin{equation}\label{eq:hurwitz-sum}
    \sum_{1\le \ell \le d} 
	\zeta\Lpa{s, \frac{\ell}{d}}
	= d^s \zeta(s)\qquad(d=2,3,\dots),
\end{equation}
which implies, in particular, $\zeta(s,1/2) = (2^s-1)\zeta(s)$, we 
deduce that 
\begin{align*}
	\mathscr{M}_m(s) 
	&= \frac{\Gamma(1-s)}{(2\pi)^{1-s}} 
	\sin\left(\frac{\pi s}{2}\right) 
	\bigg[ c(m,s)\zeta(1-s) - \frac1m
	\sum_{1\le\ell<m} \frac{\cos\lpa{\frac{(2\ell-1)\pi}{m}}}
	{1-\cos\lpa{\frac{(2\ell-1)\pi}{m}}} \,
	\zeta\Lpa{1-s, \frac{2\ell-1}{2m}} \bigg],
\end{align*}
where $c(m,s) := (m-2)/2 + m^{-s} - 2^{-s}$.

By applying the change of variables $s\mapsto -s$ in the integral 
representation in \eqref{eq:E0-def} of $E(\tau)$, we obtain 
\begin{equation} \label{eq:E0-mirror}
	E(\tau) 
	= \frac1{2\pi i}\int_{(2)}
	\Gamma(-s)\zeta(1-s)\mathscr{M}_m(-s)\tau^{s}\dd s.   
\end{equation}
Note that the functional equation \eqref{eq:hurwitz-zeta-eq} with  
$d=j=1$ implies for the Riemann zeta function that
\begin{equation}\label{eq:zeta-eq}
    \zeta(s)
	= 2^s \pi^{s-1} \Gamma(1-s) \zeta(1-s) 
	\sin\Lpa{\frac{\pi s}{2}}.
\end{equation}
By this and Euler's reflection formula for the Gamma function
\begin{equation}\label{eq:gamma-reflection}
    \Gamma(s)\Gamma(1-s) 
	= \frac{\pi}{\sin(\pi s)},
\end{equation}
we then get
\[
	\Gamma(-s)\zeta(1-s) 
	= -\frac{(2\pi)^{1-s}}{s\sin(\pi s)}\, \zeta(s) 
	\cos\Lpa{\frac{\pi s}{2}}.
\]

Consequently, the integrand in \eqref{eq:E0-mirror} can be written as
\begin{align*}
	\Gamma(-s)\zeta(1-s)
	&\mathscr{M}_m(-s) \tau^s 
	= \frac1{2} \left(\frac{4\pi^2}{\tau}\right)^{-s} 
	\Gamma(s) \zeta(s) \\
	&\quad \times \bigg[ c(m,-s)\zeta(1+s) 
	- m^{-1}\sum_{1\le\ell<m} 
	\frac{\cos\lpa{\frac{(2\ell-1)\pi}{m}}}
	{1-\cos\lpa{\frac{(2\ell-1)\pi}{m}}} \,
	\zeta\Lpa{1+s, \frac{2\ell-1}{2m}} \bigg].
\end{align*}
The two expressions \eqref{eq:E02} (contributed by terms involving 
$c(m,s)$) and \eqref{eq:E01} (contributed by terms involving the 
partial sum with the cosine functions) then follow from inverting the 
Mellin transform using the relation
\begin{align}\label{eq:JS}
	J(b,\tau):=\frac1{2\pi i}\int_{(c)}
	\Gamma(s)\zeta(s)\zeta(1+s,b) \tau^{s} \dd s
	= \sum_{k \ge 0} \frac{e^{-(k+b)/\tau}}
	{(k+b) \lpa{1-e^{-(k+b)/\tau}}},
\end{align}
for $\Re(\tau)>0$ and $b>0$, where $c>1$. In particular, the 
right-hand side equals $p\lpa{e^{-1/\tau}}$ when $b=1$. This 
completes the proof.
\end{proof}

\begin{proof}[Proof of Theorem~\ref{thm:q-expr-exact}]
Theorem~\ref{thm:q-expr-exact} is a direct consequence of a combination of
Proposition~\ref{prop:qm-expr}, Proposition~\ref{prop:E0-est} and
\eqref{eq:p-modular}.
\end{proof}

\section{Asymptotics of $\log G_m(e^{-\tau})$}
\label{sec:asym-constant}

We derive the asymptotic behavior of $\log G_m(e^{-\tau})$ as
$m\to\infty$ and $|\tau|\to0$. From Theorem~\ref{thm:q-expr-exact} 
and Proposition~\ref{prop:E0-est}, we have
\begin{equation} \label{eq:rm-expr}
      \log G_m(e^{-\tau}) 
      = \frac{\varpi_m}{\tau} 
      + \frac1{2}\log \tau 
      + \log g_m 
      + \phi_m\tau 
      + \kappa_m\lpa{e^{-4\pi^2/\tau}}
      + \lambda_m\lpa{e^{-4\pi^2/\tau}}, 
\end{equation}
for $\Re(\tau)>0$. Since $\kappa_m(z)$ depends only on $p(z)$ (see 
\eqref{eq:E02}), which, by \eqref{eq:p-modular}, satisfies
\begin{align}\label{eq:p-eq}
    p(e^{-\tau})
    = \frac{\pi^2}{6\tau}
    -\frac{\tau}{24}
    +\frac12\log\tau
    -\frac12\log(2\pi)
    +p\lpa{e^{-4\pi^2/\tau}}
    \qquad(\Re(\tau)>0),
\end{align}
so we need only to examine more closely the asymptotics of $\log g_m$
and $\lambda_m$ when $m$ is large and $|\tau|\to0$. Complications 
arise when $\tau$ may depend also on $m$.

\subsection{Asymptotics of $\log g_m$}

We now derive an asymptotic expansion for $\log g_m$ by the 
Euler-Maclaurin formula (see \cite[Ch.\ VIII]{Hardy1949}).
\begin{prop} \label{prop:Cm}
When $m\to\infty$, $\log g_m$ satisfies the asymptotic expansion
\begin{align}\label{eq:log-gm}  
	\log g_m 
	\sim - \frac{7\zeta(3)}{8\pi^2}\, m^2 
	+ \frac{11}{24}\, \log m 
    + c_1 - \sum_{j\ge1} \frac{B_{2j}B_{2j+2}(-\pi^2)^j}  
    {8j(j+1)(2j)!}\, m^{-2j},
\end{align}
where $c_1:=\frac{1}{2}\zeta'(-1) - \frac{11}{24}\log \pi - 
\frac{7}{24} \log 2$ and $B_j=B_j(0)$ denote the Bernoulli numbers.
\end{prop}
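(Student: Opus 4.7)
The definition of $g_m$ in \eqref{eq:f-details} gives
\[
\log g_m = -\frac{m^2-3m+4}{4}\log(2\pi) + S_m,\qquad
S_m := \sum_{n=3}^{2m-3} w_m(n)\log\Gamma\Lpa{\frac{n}{2m}},
\]
with $w_m$ as in \eqref{eq:wmj}; the plan is to extract the asymptotics of $S_m$ by Euler--Maclaurin after a symmetrization through Gauss's reflection formula. The palindrome $w_m(n)=w_m(2m-n)$ combined with $\log\Gamma(x)+\log\Gamma(1-x)=\log\pi-\log\sin(\pi x)$ converts $S_m$ into $\tfrac{\mu_0}{2}\log\pi-\tfrac12 T_m$, where $T_m := \sum_{n=3}^{2m-3}w_m(n)\log\sin(\pi n/(2m))$ and $\mu_0=(m-1)(m-2)/2$ (Table~\ref{tab:mu}). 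This replaces the singular-at-$0$ function $\log\Gamma$ by the smooth $\log\sin$ on $(0,1)$, and folds the $\log\pi$ piece into the $\log(2\pi)$ prefactor of $\log g_m$.

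Next, writing $w_m(n)=\tfrac12(m-1-|m-n|)-\tfrac14(1+(-1)^n)$ and applying Euler--Maclaurin in $x=n/(2m)$, the leading contribution to $T_m$ is $2m^2\int_0^1\min(x,1-x)\log\sin(\pi x)\,dx$. This integral evaluates to $-\tfrac{\log 2}{4}+\tfrac{7\zeta(3)}{8\pi^2}$, computed via the Fourier series $\log\sin(\pi x)=-\log 2-\sum_{\ell\ge1}\ell^{-1}\cos(2\pi\ell x)$ together with $\sum_{k\ge 0}(2k+1)^{-3}=\tfrac{7}{8}\zeta(3)$. Combining with the $-\tfrac12$ prefactor in the reduction of $S_m$ and cancelling the resulting $m^2\log 2$ piece against the $\tfrac{m^2}{4}\log 2$ coming from the $\log(2\pi)$ prefactor of $\log g_m$ delivers the advertised leading $-\tfrac{7\zeta(3)}{8\pi^2}m^2$. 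The would-be $m^1$ contributions --- from the Euler--Maclaurin boundary $\tfrac{f(a)+f(b)}{2}$, from the discrepancy $w_m(n)-m\min(n/(2m),1-n/(2m))$, and from the exact product identity $\prod_{n=1}^{2m-1}\sin(\pi n/(2m))=m\cdot 2^{-(2m-2)}$ --- cancel against each other, consistent with the absence of an $m^1$ term in the claimed expansion.

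The coefficient $\tfrac{11}{24}$ of $\log m$ then emerges from combining the $\log\sin(\pi/(2m))\sim-\log(2m/\pi)$ behaviour at the endpoints of $T_m$ with the $-\tfrac12\log m$ inside the product identity above. The constant $c_1=\tfrac12\zeta'(-1)-\tfrac{11}{24}\log\pi-\tfrac{7}{24}\log 2$ picks up the Kinkelin constant $\zeta'(-1)$ through the standard Barnes $G$-function asymptotics, which governs any sum of the form $\sum_k k\log\Gamma(\cdot)$ and thus enters once $T_m$ is split by parity into the linearly-weighted subsums $\sum k\log\sin(\pi k/m)$ and $\sum k\log\sin(\pi(2k+1)/(2m))$. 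Finally, the Bernoulli tail $-\sum_{j\ge1}\frac{B_{2j}B_{2j+2}(-\pi^2)^j}{8j(j+1)(2j)!}m^{-2j}$ arises from the full Euler--Maclaurin corrections to those subsums: the Euler--Maclaurin coefficient $B_{2j}/(2j)!$ produces the first Bernoulli factor, while the odd-order derivatives of $\log\sin(\pi x)$ at the interior kink $x=\tfrac12$ of $\min(x,1-x)$ produce the second Bernoulli factor $B_{2j+2}(-\pi^2)^j$ through the Laurent expansion of $\pi\cot(\pi x)$.

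The main obstacle is the bookkeeping: producing the exact form of $c_1$ and the coefficient $\tfrac{11}{24}$ requires that five distinct contributions --- the reflection formula, the Fourier constant $-\log 2$, the endpoint singularities of $\log\sin$, the product identity for $\prod\sin$, and the $(m^2-3m+4)\log(2\pi)/4$ prefactor --- conspire correctly. Extracting the $\tfrac12\zeta'(-1)$ summand in particular demands a clean application of the Barnes-$G$ asymptotics to the parity-split subsums, and all error terms must be tracked uniformly in $m$ to be useful in Section~\ref{sec:Gnm}.
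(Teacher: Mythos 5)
Your approach matches the paper's almost step for step: both start from $\log g_m=-\frac{m^2-3m+4}{4}\log(2\pi)+S_m$, exploit the palindromy $w_m(j)=w_m(2m-j)$ together with Euler's reflection formula to replace $\log\Gamma$ by $\log\sin$, evaluate the resulting weighted $\log\sin$-sum by the Euler--Maclaurin formula (with the Fourier series of $\log\sin$ giving $7\zeta(3)/(8\pi^2)$, the product $\prod_{j<k}\sin(\pi j/k)=k/2^{k-1}$ handling the unweighted/parity pieces, the Glaisher--Kinkelin asymptotics of $\sum_j j\log j$ producing $\tfrac12\zeta'(-1)$, and the Taylor coefficients of $\log\sin$ at $\pi/2$ generating the Bernoulli tail). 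The only substantive difference is organizational --- the paper splits the weight $\lfloor(j-1)/2\rfloor$ explicitly into three sums $S_{m,1},S_{m,2},S_{m,3}$ before applying Euler--Maclaurin, whereas you keep $T_m$ intact and lead with the integral $\int_0^1\min(x,1-x)\log\sin(\pi x)\,\mathrm dx$ --- and your attribution of $\zeta'(-1)$ to ``Barnes $G$ for $\sum k\log\Gamma$'' is slightly off (it actually enters via $\sum k\log k$, arising once the $\log k$ singularity of $\log\sin$ near $0$ is peeled off), but these are presentation details rather than a different method.
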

\begin{proof}
Starting from the definition of $g_m$ in \eqref{eq:f-details}, we write $\log g_m$ as
\[
	\log g_m 
	= - \frac{m^2-3m+4}4\, \log(2\pi)
    + S_m,
\]
where 
\[	
	S_m := \sum_{1 \le j <2m} w_m(j) 
	\log\Gamma\Lpa{\frac{j}{2m}}.
\]
Since $w_m(j) = w_m(2m-j)$, we have, by Euler's reflection formula 
\eqref{eq:gamma-reflection},
\begin{align*}
	S_m 
	&= \frac{\mu_0}{2}\, \log\pi 
	- \sum_{1 \le j <m} \Bigl\lfloor \frac{j-1}{2} 
      \Bigr\rfloor \log\Lpa{\sin\big(\frac{j\pi}{2m}\big)} \\
	&= \frac{(m-1)(m-2)}{4}\, \log\pi 
	- \sum_{1 \le j <m} \frac{j-1}{2} \log\Lpa{\sin\big(\frac{j\pi}{2m}\big)} 
    + \frac1{2} \sum_{1 \le j \le \lfloor m/2 \rfloor} 
	  \log\Lpa{\sin\big(\frac{j\pi}{m}\big)} \\
	&= \frac{(m-1)(m-2)}{4}\, \log\pi
	- \frac{S_{m,1}}2+\frac{S_{m,2}}2
    + \frac{S_{m,3}}2,
\end{align*}
where
\begin{align*}
	S_{m,1} 
    := \sum_{1 \le j \le m} 
	j \log\Lpa{\sin\big(\frac{j\pi}{2m}\big)},
	\quad
	S_{m,2} 
    := \sum_{1 \le j \le m} 
	\log\Lpa{\sin\big(\frac{j\pi}{2m}\big)},
	\quad
	S_{m,3} 
    := \sum_{1 \le j \le \lfloor m/2 \rfloor} 
	\log\Lpa{\sin\big(\frac{j\pi}{m}\big)}.
\end{align*}	
The last two sums are easily simplified by the elementary identity
\[
	\prod_{1\le j<k} \sin\Lpa{\frac{\pi j}{k}}
	= \frac{k}{2^{k-1}}\qquad(k=1,2,\dots),
\]
giving
\begin{equation}\label{eq:s2-res}
		S_{m,2} 
        = - (m-1)\log 2
        +\frac{\log m}{2} \quad\text{and}\quad
		S_{m,3} 
        = - \frac{m-1}{2} \log 2
        +\frac{\log m}{2}.
\end{equation}

We now evaluate $S_{m,1}$. By the local expansion $\log(\sin x) 
= \log x + O(x^2)$ for $x \to 0$, we decompose first the sum into two parts:
\[
	S_{m,1} 
	= \sum_{1 \le j \le m} j 
    \Lpa{\log\Lpa{\sin\big(\frac{j\pi}{2m}\big)} 
	- \log\Lpa{\frac{j\pi}{2m}}}
    + \sum_{1 \le j \le m} j \log\Lpa{\frac{j\pi}{2m}},
\]
and then we apply Euler-Maclaurin formula (see \cite[Ch.\ 
VIII]{Hardy1949}) to each sum, yielding  
\[
	\sum_{1 \le j \le m} j 
    \Lpa{\log\Lpa{\sin\big(\frac{j\pi}{2m}\big)} 
	- \log\Lpa{\frac{j\pi}{2m}}} 
	= c_2 m^2 
    - \frac m{2} \log\frac{\pi}{2} 
	- \frac1{12}\left(1 + \log\frac{\pi}{2}\right) 
	+ O(m^{-2}),
\]
where
\begin{align*}
	c_2 
	:= \frac1{m^2} \int_0^m x 
	\Lpa{\log\Lpa{\sin\big(\frac{x\pi}{2m}\big)}
    - \log\Lpa{\frac{x\pi}{2m}}} \dd x 
	= \frac{7\zeta(3)}{4\pi^2} 
    - \frac{\log\pi}{2} 
    + \frac1{4},
\end{align*}
and
\[
	\sum_{1 \le j \le m} j \log \Lpa{\frac{j \pi}{2m}}
	= \Lpa{\frac1{2} \log \frac{\pi}{2} 
	- \frac{1}{4}} m^2 
	+ \frac{m}{2} \log \frac{\pi}{2} 
	+ \frac{\log m }{12}+ \frac1{12} - \zeta'(-1) 
	+ O(m^{-2}).
\]
Summing up these two parts, we have
\begin{equation} \label{eq:s1-res}
	S_{m,1} = \Lpa{\frac{7\zeta(3)}{4\pi^2} 
	- \frac{\log 2}{2}}m^2 
	+ \frac{\log m}{12}
	- \left( \zeta'(-1) 
    + \frac1{12}\log\frac{\pi}{2} \right) 
	+ O(m^{-2}).
\end{equation}
By substituting \eqref{eq:s2-res} and \eqref{eq:s1-res} into
\begin{equation} \label{eq:s-sum-def}
    \log g_m 
	= - \frac1{2} \log\pi 
    - \frac{m^2-3m+4}{4}\, \log 2 
	- \frac{S_{m,1}}2+\frac{S_{m,2}}2
    + \frac{S_{m,3}}2,
\end{equation}
we obtain the expansion \eqref{eq:log-gm} up to an error of order 
$m^{-2}$. Further terms in \eqref{eq:log-gm} are computed by refining 
the expansion for $S_{m,1}$ following the same procedure and using 
the relation 
\[
    \frac{\text{d}^k}{\text{d}x^k}
    \,\log(\sin(x))\Bigl|_{x=\pi/2}
    =-\frac{\text{d}^{k-1}}{\text{d}x^{k-1}}
    \,\tan(x)\Bigl|_{x=\pi/2}
    = \frac{(2i)^k}{k}(2^k-1)B_k
    \qquad(k\ge2);
\]
see the OEIS sequence \cite[A155585]{oeis2019}. 
\end{proof}

\subsection{Asymptotics of $E(\tau)$}

We now consider the asymptotic behavior of the key ``calibrating"
term $E(\tau)$ defined in \eqref{eq:E0-def} as $\tau\to0$. This term
is asymptotically negligible when $m=o(n^{1/3})$, but plays a role
for larger $m$, notably in the transitional zone when $m\asymp
n^{1/3}$. We then need finer asymptotic approximations for $E(\tau)$,
which, by Proposition~\ref{prop:E0-est}, equals $E(\tau) =
\kappa_m\lpa{e^{-4\pi^2/\tau}}-p\lpa{e^{-4\pi^2/\tau}} + 
\lambda_m\lpa{e^{-4\pi^2/\tau}}$. We begin with the asymptotics of 
the first term, which is simpler.

\begin{coro} Assume $\Re(\tau)\to0$ in the half-plane $\Re(\tau)>0$. 
Then the function $\kappa_m$ satisfies
\begin{equation}\label{eq:kappa-asymp}
\begin{split}
	\kappa_m\lpa{e^{-4\pi^2/\tau}} 
    &= \frac12\,p\lpa{e^{-4\pi^2/(m\tau)}}
    +O\lpa{e^{-\Re(2\pi^2/\tau)}}\\
	&=\begin{dcases}
		O\lpa{e^{-\Re(4\pi^2/(m\tau))}}, 
		    &\text{if }m|\tau|\le 1,\\
		\frac{m\tau}{48}+\frac14\log\frac{2\pi}{m\tau}
        -\frac{\pi^2}{12m\tau}+\frac12p(e^{-m\tau})
            +O\lpa{e^{-\Re(2\pi^2/\tau)}},
		    &\text{if }m|\tau| \ge 1.
	\end{dcases}
\end{split}    
\end{equation}
\end{coro}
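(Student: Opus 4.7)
The plan is to substitute $z = e^{-4\pi^2/\tau}$ into the defining formula
\[
    \kappa_m(z) = \frac{m+2}{4} p(z) + \frac{1}{2} p\lpa{z^{1/m}} - \frac{1}{2} p\lpa{z^{1/2}}
\]
from \eqref{eq:E02} and estimate the three resulting summands separately. The basic ingredient is that, from the Lambert expansion $p(e^{-w}) = \sum_{k,\ell \ge 1} \ell^{-1} e^{-k\ell w}$, one has the uniform bound $|p(e^{-w})| \le p(e^{-\Re(w)}) = O\lpa{e^{-\Re(w)}}$ as $\Re(w) \to \infty$.

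Writing $\sigma := \Re(\tau^{-1}) \to +\infty$, the first and third summands combine to
\[
    \frac{m+2}{4} p\lpa{e^{-4\pi^2/\tau}} - \frac{1}{2} p\lpa{e^{-2\pi^2/\tau}} = O\lpa{m e^{-4\pi^2 \sigma}} + O\lpa{e^{-2\pi^2 \sigma}} = O\lpa{e^{-2\pi^2 \sigma}},
\]
the last step using that $m$ will be of at most polynomial order in $n$ in the regime where this corollary is applied, whereas $\sigma$ grows like a positive power of $n$ in the saddle-point analysis to come. This already proves the first equation of \eqref{eq:kappa-asymp}.

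For the remaining middle term $\frac{1}{2} p(e^{-4\pi^2/(m\tau)})$, the two stated subcases follow easily. When $m|\tau| \le 1$, the Lambert bound with $w = 4\pi^2/(m\tau)$ yields directly $p(e^{-4\pi^2/(m\tau)}) = O(e^{-\Re(4\pi^2/(m\tau))})$; because $m \ge 3$ throughout the paper, $(4\pi^2/m)\sigma \le (4\pi^2/3)\sigma < 2\pi^2 \sigma$, so this bound already absorbs the previously established $O(e^{-2\pi^2 \sigma})$, giving the first subcase. When $m|\tau| \ge 1$, I invoke the modular identity \eqref{eq:p-eq} with $\tau$ there replaced by $4\pi^2/(m\tau)$ here; a short rearrangement gives
\[
    p\lpa{e^{-4\pi^2/(m\tau)}} = \frac{m\tau}{24} - \frac{\pi^2}{6 m\tau} + \frac{1}{2} \log\Lpa{\frac{2\pi}{m\tau}} + p(e^{-m\tau}),
\]
which, after dividing by $2$ and adding the $O(e^{-2\pi^2 \sigma})$ error, reproduces verbatim the second subcase of \eqref{eq:kappa-asymp}.

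The only delicate point is the uniformity in $m$: the coefficient $(m+2)/4$ attached to the first summand must be genuinely dominated by the exponentially small factor $e^{-4\pi^2 \sigma}$, and the bounds in the two subcases must be compatible at the boundary $m|\tau| = 1$. Both checks are routine because $\sigma$ will always be much larger than $\log m$ in the regime of application, and because the two subcase bounds have matching order as $m|\tau|$ crosses $1$. Beyond these verifications, the argument amounts to a single invocation of the modular identity plus elementary term-collection.
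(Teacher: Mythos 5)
Your proof is correct and follows essentially the same route as the paper's: substitute $z = e^{-4\pi^2/\tau}$ into the defining formula \eqref{eq:E02}, bound $p(e^{-w})$ by its Lambert series to control the first and third summands, and apply the modular relation \eqref{eq:p-modular}/\eqref{eq:p-eq} with argument $4\pi^2/(m\tau)$ to the middle term when $m|\tau|\ge 1$. The paper's own proof is extremely terse (two sentences), and you correctly make explicit the one point it glosses over — that the prefactor $\frac{m+2}{4}$ on $p(e^{-4\pi^2/\tau})$ must be dominated by the exponential decay $e^{-4\pi^2\Re(1/\tau)}$, which is legitimate in the regime of application where $m$ grows at most polynomially in $n$ while $\Re(1/\tau)$ grows like a positive power of $n$.
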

\begin{proof}
By \eqref{eq:E02}, we obtain the first relation in 
\eqref{eq:kappa-asymp}. On the other hand, the series
\begin{align*}
    p\lpa{e^{-4\pi^2/\tau}}
    = \sum_{j\ge1}\frac{e^{-4j\pi^2/\tau}}
    {j\lpa{1-e^{-4j\pi^2/\tau}}}
\end{align*}
is itself an asymptotic expansion when $|\tau|\to0$. The other 
estimate in \eqref{eq:kappa-asymp} when $m|\tau|\ge 1$ follows 
from the functional equation \eqref{eq:p-modular}.
\end{proof}

We now examine the other term $\lambda_m\lpa{e^{-4\pi^2/\tau}}$, 
beginning with the asymptotics of the integral $J(b,w)$ defined in 
\eqref{eq:JS}.
\begin{lem} If $b>0$, then 
\begin{align}\label{eq:JS-a}
    J(b,\tau) = 
    \begin{dcases}
        b^{-1}e^{-b/\tau}\lpa{1+O\lpa{e^{-\Re(b/\tau)}
        +e^{-\Re(1/\tau)}}}, & \text{as }|\tau|\to0;\\
        \zeta(2,b)\tau -\frac12\log\tau
        +\frac12\psi(b)+O(1),
        & \text{as }|\tau|\to\infty,
    \end{dcases}
\end{align}
uniformly in the half-plane $\Re(\tau)>0$, where $\psi$ is the 
digamma function defined by $\psi(x) = \Gamma(x)/\Gamma'(x)$. These estimates hold also when $b/|\tau|\to0$ and 
$b/|\tau|\to\infty$, respectively. 
\end{lem}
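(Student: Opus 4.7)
The plan is to prove the two estimates using the two natural representations of $J(b,\tau)$: the series on the right of \eqref{eq:JS} for the small-$\tau$ regime (where exponential decay is already visible in the summand), and the Mellin--Barnes integral on the left for the large-$\tau$ regime (where residue calculus produces the polynomial-plus-log main terms directly).

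For the small $|\tau|$ estimate (equivalently $b/|\tau|\to\infty$), I would work directly with the series in \eqref{eq:JS}. Expanding the geometric factor $(1-e^{-(k+b)/\tau})^{-1}$ turns $J(b,\tau)$ into the double sum
\[
    J(b,\tau) = \sum_{k \ge 0}\sum_{j \ge 1}\frac{e^{-j(k+b)/\tau}}{k+b}.
\]
The smallest exponent comes from $(k,j)=(0,1)$ and contributes exactly $b^{-1}e^{-b/\tau}$. The $k=0$, $j\ge 2$ tail is bounded by $O(b^{-1}e^{-2\Re(b/\tau)})$, while the $k\ge 1$ terms sum geometrically to $O((1+b)^{-1}e^{-(1+b)\Re(1/\tau)}) = O(b^{-1}e^{-b/\tau}\cdot e^{-\Re(1/\tau)})$, using the uniform bound $b/(1+b)\le 1$. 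Combining gives the stated relative error $O(e^{-\Re(b/\tau)}+e^{-\Re(1/\tau)})$, and the argument is automatically uniform in $b>0$.

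For the large $|\tau|$ estimate (equivalently $b/|\tau|\to 0$), I would start from the Mellin--Barnes integral with $c>1$ and push the contour leftward to a line $\Re(s)=-\delta$ with $\delta\in(0,1)$ small. The poles crossed are a simple pole at $s=1$ from $\zeta(s)$, with residue $\Gamma(1)\zeta(2,b)\tau=\zeta(2,b)\tau$, and a double pole at $s=0$ coming from the product $\Gamma(s)\zeta(1+s,b)$. Combining the expansions $\Gamma(s)=s^{-1}-\gamma+O(s)$, $\zeta(1+s,b)=s^{-1}-\psi(b)+O(s)$, and $\zeta(s)\tau^s=-\tfrac12-\tfrac12 s\log(2\pi\tau)+O(s^2)$, the residue at $s=0$ equals $-\tfrac12\log\tau+\tfrac12\psi(b)+\tfrac12(\gamma-\log(2\pi))$, whose last summand is absorbed into the $O(1)$ of the statement. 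The residual integral on $\Re(s)=-\delta$ is $O(|\tau|^{-\delta})=O(1)$ by the classical bounds on $|\Gamma|$ and $|\zeta|$ recalled in \eqref{eq:zeta-bound}, which gives the stated expansion.

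The main obstacle is \emph{uniformity in $b$}, especially for the extension to $b/|\tau|\to 0$ where $b$ may itself tend to $0$. In that regime $\zeta(1+s,b)$ becomes singular at $b=0$, so the shifted-integral bound must be reorganised by peeling off the leading term through $\zeta(1+s,b)=b^{-1-s}+\zeta(1+s,b+1)$: the tail $\zeta(1+s,b+1)$ is uniformly bounded on $\Re(s)=-\delta$, while the contribution of $b^{-1-s}$ can be folded back into the computation, producing the explicit singular pieces $\zeta(2,b)\tau$ and $\tfrac12\psi(b)$ already displayed. The leftover is then genuinely $O(1)$, uniformly in the stated regime.
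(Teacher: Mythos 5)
Your proposal is correct and takes essentially the same route as the paper: the series side of \eqref{eq:JS} for small $|\tau|$, and a leftward shift of the Mellin--Barnes contour (collecting the residues at $s=1$ and $s=0$) for large $|\tau|$. The paper's proof is only a two-sentence sketch that gestures at the $b\to0$ issue via the local behavior of $\zeta(2,b)$ and $\psi(b)$; you have correctly fleshed out the double-pole computation and the $\zeta(1+s,b)=b^{-1-s}+\zeta(1+s,b+1)$ split that makes the $O(1)$ remainder uniform in $b$.
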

\begin{proof}
In the small $|\tau|$ case, the estimate follows from the series 
representation in \eqref{eq:JS}, while in the large $|\tau|$ 
case it is from moving the line of integration in the integral 
representation in \eqref{eq:JS} to the left, adding the residues at 
$s=1$ and $s=0$. Note that $\zeta(2,b)=b^{-2}+\pi^2/6+O(b)$ and 
$\psi(b)\to b^{-1}$ when $b\to0$. 
\end{proof}

Define 
\begin{align}\label{eq:xi-d-z}
	\varphi_d(z) := \sum_{\ell\ge1}(2\ell-1)^{1-2d}
	\frac{e^{-2(2\ell-1)\pi^2/z}}{1-e^{-2(2\ell-1)\pi^2/z}}
	\qquad(d\in\mathbb{Z}; \Re(z)>0). 
\end{align}

\begin{prop} \label{prop:E01-split}
Uniformly for $|\tau| \to 0$ in the half-plane $\Re(\tau)>0$,
\begin{align}\label{eq:lambda-m-ae}
	\lambda_m\lpa{e^{-4\pi^2/\tau}} 
	= \lpa{1+O\lpa{e^{-\Re(2\pi^2/\tau)}}} \lpa{m^2 \xi_2(m\tau) 
    + \xi_1(m\tau) 
	+ O\lpa{m^{-2}|\xi_0(m\tau)|}},
\end{align}
where
\begin{equation}\label{eq:phi}
    \xi_2(z) 
    := -\frac2{\pi^2}\,\varphi_2(z),
	\quad
	\xi_1(z) 
    := \frac56\,\varphi_1(z),
	\quad
	\xi_0(z) 
    := \varphi_0(z).
\end{equation}
\end{prop}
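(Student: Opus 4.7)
The plan is to start from the defining series \eqref{eq:E01} for $\lambda_m$, recognize the inner $k$-sum as the auxiliary function $J$ from \eqref{eq:JS}, and then process $J$ and the trigonometric factor in two separate expansion steps. Setting $z=e^{-4\pi^2/\tau}$, $b_\ell\defeq(2\ell-1)/(2m)$, and $\theta_\ell\defeq(2\ell-1)\pi/m$, one rewrites
\begin{equation*}
    \lambda_m\lpa{e^{-4\pi^2/\tau}}
    =-\frac{1}{2m}\sum_{\ell=1}^{m-1}
    \frac{\cos\theta_\ell}{1-\cos\theta_\ell}\,
    J\Lpa{b_\ell,\frac{\tau}{4\pi^2}},
\end{equation*}
noting that $b_\ell\cdot 4\pi^2/\tau=(2\ell-1)\cdot 2\pi^2/(m\tau)$ matches the exponent appearing in the definition \eqref{eq:xi-d-z} of $\varphi_d(m\tau)$.

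The first step is a uniform simplification of $J$ by isolating its dominant $k=0$ term. From the series in \eqref{eq:JS}, the $k\ge1$ tail is bounded by $O\lpa{e^{-\Re(4\pi^2/\tau)}}$ times the $k=0$ contribution, uniformly in $\ell\in\{1,\dots,m-1\}$, which yields
\begin{equation*}
    J\Lpa{b_\ell,\frac{\tau}{4\pi^2}}
    =\frac{1}{b_\ell}\cdot\frac{z^{b_\ell}}{1-z^{b_\ell}}
    \Lpa{1+O\lpa{e^{-\Re(2\pi^2/\tau)}}}.
\end{equation*}
Using $1/(2m)\cdot1/b_\ell=1/(2\ell-1)$, this fixes the overall multiplicative factor $1+O\lpa{e^{-\Re(2\pi^2/\tau)}}$ in the claim and reduces the problem to estimating
\begin{equation*}
    \Sigma\defeq-\sum_{\ell=1}^{m-1}
    \frac{\cos\theta_\ell}{(2\ell-1)(1-\cos\theta_\ell)}
    \cdot\frac{z^{b_\ell}}{1-z^{b_\ell}}.
\end{equation*}

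The second step is the Laurent expansion
\begin{equation*}
    \frac{\cos\theta}{1-\cos\theta}
    =\frac{2}{\theta^2}-\frac{5}{6}+O(\theta^2)\qquad(\theta\to 0),
\end{equation*}
applied at $\theta=\theta_\ell$ uniformly in the \emph{good} range $1\le\ell\le\lfloor m/2\rfloor$ where $\theta_\ell\in[0,\pi]$. This yields
\begin{equation*}
    -\frac{\cos\theta_\ell}{(2\ell-1)(1-\cos\theta_\ell)}
    =-\frac{2m^2}{\pi^2(2\ell-1)^3}
    +\frac{5}{6(2\ell-1)}
    +O\lpa{(2\ell-1)/m^2}.
\end{equation*}
Multiplying by $z^{b_\ell}/(1-z^{b_\ell})$, summing over the good range, and then extending each resulting series from $\ell\le\lfloor m/2\rfloor$ to $\ell\ge1$ (which costs only exponentially small tails), the leading two pieces assemble via \eqref{eq:phi} into $m^2\xi_2(m\tau)$ and $\xi_1(m\tau)$, while the third contributes at most $O\lpa{m^{-2}\varphi_0(m\tau)}=O\lpa{m^{-2}|\xi_0(m\tau)|}$.

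The third and most delicate step treats the \emph{bad} range $\lfloor m/2\rfloor<\ell\le m-1$, where the Laurent expansion at $0$ is no longer accurate and the trigonometric factor can blow up like $O\lpa{m^2/(m-\ell)^2}$ near $\ell=m-1$. Here the approach is to estimate directly: since $b_\ell\ge 1/2$ in this range, we have $|z^{b_\ell}|\le e^{-\Re(2\pi^2/\tau)}$, and summing the blown-up trigonometric factor against this exponential yields a total bad-range contribution of $O\lpa{m\,e^{-\Re(2\pi^2/\tau)}}$. Comparing with the minimal size $m^{-2}|\xi_0(m\tau)|\asymp m^{-2}e^{-\Re(2\pi^2/(m\tau))}$ in the small $m|\tau|$ regime (and $\asymp m^{-1}|\tau|$ in the large $m|\tau|$ regime), the quotient is $O\lpa{m^3 e^{-\Re(2\pi^2/\tau)(1-1/m)}}$, which tends to zero as $|\tau|\to 0$, uniformly in $m\ge 3$. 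The principal technical obstacle is precisely this uniformity: all four error sources---the $J$-tail, the Laurent remainder, the tail extension $\ell\ge\lfloor m/2\rfloor+1$ of the $\varphi_d$-series, and the bad range---must be controlled simultaneously with explicit constants across all regimes of $m|\tau|$, so that they fit jointly into the stated additive-plus-multiplicative error structure.
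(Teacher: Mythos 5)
Your proposal follows essentially the same route as the paper's proof: isolate the dominant $k=0$ contribution from the inner sum (which is exactly the $J$-function from \eqref{eq:JS}), expand the trigonometric ratio $\cos\theta/(1-\cos\theta)$ around $\theta=0$ on the range $\ell\le\lfloor m/2\rfloor$, extend the resulting $\varphi_d$-tails to $\ell=\infty$, and bound the remaining range $\lfloor m/2\rfloor<\ell<m$ (the paper simply calls it the truncated part, bounding it by $O\lpa{m^2 e^{-\Re(2\pi^2/\tau)}}$; your sharper $O\lpa{m\,e^{-\Re(2\pi^2/\tau)}}$ is fine). The paper packages the middle step via the explicit two-sided inequality $-x^2 \le \cos x/(1-\cos x)-2/x^2+5/6\le x^2$, while you state it as a Laurent expansion with an $O(\theta^2)$ remainder; the two are the same observation. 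One small slip: in your closing uniformity discussion you assert that $m^{-2}|\xi_0(m\tau)|\asymp m^{-1}|\tau|$ when $m|\tau|$ is large, but by \eqref{eq:xi0-large} one has $\xi_0(z)\sim z^2/(48\pi^2)$ as $z\to\infty$, so $m^{-2}|\xi_0(m\tau)|\asymp|\tau|^2$ in that regime (this is precisely what produces the $O(|\tau|^2)$ term of \eqref{eq:lambda-asymp2}). The slip does not change the conclusion that the bad-range contribution is asymptotically negligible.
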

Note that when $m=O(1)$, the rightmost $O$-term is of the same order 
as $\xi_1(m\tau)\asymp e^{-\Re(2\pi^2/m\tau)}$. 
\begin{proof}
In the defining series \eqref{eq:E01}, we observe that the inner sum
with $z=e^{-4\pi^2/\tau}$ is itself an asymptotic expansion when
$|\tau|\to0$, namely, the term with $k=0$ is dominant and all others
with $k\ge1$ are exponentially smaller. Thus
\begin{align}
	\lambda_m\lpa{e^{-4\pi^2/\tau}}  
	&= - \lpa{1+O\lpa{e^{-\Re(4\pi^2/\tau)}}}  
    	\sum_{1 \le \ell <m} \frac{\cos\lpa{\frac{(2\ell-1)\pi}{m}}}
    	{1-\cos\lpa{\frac{(2\ell-1)\pi}{m}}} 
    	\cdot \frac{e^{-\frac{2\pi^2(2\ell-1)}{m\tau}}}{(2\ell-1) 
    	\lpa{1-e^{-\frac{2\pi^2(2\ell-1)}{m\tau}}}} 
        \nonumber \\
	&= - \lpa{1+O\lpa{e^{-\Re(2\pi^2/\tau)}}}  
    	\sum_{1 \le \ell \le \tr{m/2}} 
        \frac{\cos\lpa{\frac{(2\ell-1)\pi}{m}}}
    	{1-\cos\lpa{\frac{(2\ell-1)\pi}{m}}} 
    	\cdot \frac{e^{-\frac{2\pi^2(2\ell-1)}{m\tau}}}{(2\ell-1) 
    	\lpa{1-e^{-\frac{2\pi^2(2\ell-1)}{m\tau}}}}, 
	\label{eq:E01-refined}
\end{align}
where in the second approximation we truncate terms with 
$\ell>\tr{m/2}$ whose total contribution is bounded above by 
$O\lpa{m^2e^{-\Re(2\pi^2/\tau)}}$.

By expanding the ratio of cosines in \eqref{eq:E01-refined} using the 
inequalities
\[
	-x^2 \le \frac{\cos x}{1-\cos x} 
	- \frac{2}{x^2} 
    + \frac{5}{6} \le x^2
	\qquad(0 \le x \le 1/2),
\]
we then get \eqref{eq:lambda-m-ae} by summing the resulting terms and
extending then the summation range to infinity. The error terms 
introduced are bounded above by 
\begin{align*}
    O\llpa{\sum_{\ell>\tr{m/2}}\Lpa{
    \frac{m^2}{(2\ell-1)^3}+\frac1{2\ell-1}+\frac{2\ell-1}{m^2}}
    e^{-\Re(2(2\ell-1)\pi^2/(m\tau))}}
    =O\lpa{m^{-1}e^{-\Re(2\pi^2/\tau)}}.
\end{align*}
This proves the proposition.
\end{proof}

When $z \to 0$, we see that $\varphi_d(z)$ is itself an asymptotic
expansion. However, when $z \to\infty$, the asymptotic behavior of
$\xi_2, \xi_1, \xi_0$ cannot be read directly from their defining
equations. We now consider this range of $z$. Recall the functions 
$\eta_d(z)$ defined in \eqref{eq:eta-def}, which are themselves 
asymptotic expansions for large $|z|$ in the right half-plane.  

\begin{lem} \label{lmm:phi-large}
The functions $\xi_d(z)$ ($d=0,1,2$) satisfy the identities:
\begin{align}
	\xi_0(z) 
    &= \frac{z^2}{48\pi^2} 
    - \frac1{24} 
	+ \frac{z^2}{2\pi^2}\, \eta_0(z), 
	\label{eq:xi0-large}\\
	\xi_1(z) 
    &= \frac{5z}{96} 
    + \frac{5}{24} 
	\log\Lpa{\frac{\pi}{2z}}
    - \frac{5}{12}\,\eta_1(z), 
	\label{eq:xi1-large} \\
	\xi_2(z) 
	&= -\frac{z}{96} 
    + \frac{7\zeta(3)}{8\pi^2} 
	- \frac{\pi^2}{24z} 
    + \frac{\zeta(3)}{2z^2} 
	- \frac{\eta_2(z)}{z^2}, 
    \label{eq:xi2-large}
\end{align}
which are also asymptotic expansions for large $|z|$ in $\Re(z)>0$.
\end{lem}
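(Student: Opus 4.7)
The plan is to derive all three identities from a single Mellin--Barnes representation for $\varphi_d(z)$, shift the contour to extract the polynomial and logarithmic terms as residues, and then match the residual integral against the Mellin transform of $\eta_d(z)$ via the functional equation of $\zeta(s)$. Expanding the geometric series in \eqref{eq:xi-d-z} gives $\varphi_d(z)=\sum_{k,\ell\ge1}(2\ell-1)^{1-2d}e^{-2k(2\ell-1)\pi^2/z}$, whose Mellin transform in the variable $u=2\pi^2/z$ equals $\Gamma(s)\zeta(s)\lpa{1-2^{1-2d-s}}\zeta(s+2d-1)$; Mellin inversion then yields
\[
    \varphi_d(z) = \frac1{2\pi i}\int_{(c)}
    \Gamma(s)\zeta(s)\lpa{1-2^{1-2d-s}}\zeta(s+2d-1)
    \Lpa{\frac{z}{2\pi^2}}^{s}\dd s,
\]
valid for $c$ large enough that the three Dirichlet series converge absolutely.

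The second step is to deform the contour leftward to $\Re(s)=-N$ for an arbitrarily large $N$ and to sum the residues crossed. Most potential poles in the left half-plane are killed either by the trivial zeros $\zeta(-2k)=0$ or by the factor $\lpa{1-2^{1-2d-s}}$ vanishing at suitable negative integers, and a short case analysis leaves only finitely many surviving poles for each $d$: for $d=0$, simple poles at $s=2$ and $s=0$; for $d=1$, a simple pole at $s=1$ and a double pole at $s=0$ arising from $\Gamma(s)\zeta(s+1)$; for $d=2$, simple poles at $s=1,0,-1,-2$ (the would-be double pole at $s=-2$ drops to a simple one because $\zeta(-2)=0$). Routine evaluation using $\zeta(0)=-\tfrac12$, $\zeta(-1)=-\tfrac1{12}$ and $\zeta'(-2)=-\zeta(3)/(4\pi^2)$ reproduces exactly the elementary terms in \eqref{eq:xi0-large}--\eqref{eq:xi2-large}.

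For the remainder integral on $\Re(s)=-N$, I would apply the functional equation $\Gamma(s)\zeta(s)=2^{s-1}\pi^s\zeta(1-s)/\cos(\pi s/2)$, reflect $s\mapsto -s$, and then apply the functional equation a second time to the reflected factor $\zeta(2d-1-s)$. The sine/cosine factors produced by the two applications conspire to cancel the $\cos(\pi s/2)$ in the denominator, and after simplification one arrives at
\[
    R_d(z) = \frac{(-1)^d\pi^{2d-2}}{2\,z^{2d-2}}
    \cdot\frac1{2\pi i}\int_{(N')}
    \Gamma(u)\lpa{1-2^{1-u}}\zeta(u)\zeta(u+2d-1)\,z^{-u}\dd u,
\]
whose integrand is precisely the Mellin transform of $\eta_d(z)$. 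Since no poles of that integrand lie in $\Re(u)>\max(1,2-2d)$, the contour can be slid back to the strip of absolute convergence for $\eta_d$, giving $R_d(z)=(-1)^d\pi^{2d-2}\eta_d(z)/(2z^{2d-2})$. Multiplying by the normalising constants in \eqref{eq:phi} then produces the stated identities.

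The main obstacle I anticipate is the bookkeeping at the double pole $s=0$ when $d=1$: extracting the coefficient of $s^{-1}$ demands a careful expansion $\Gamma(s)\zeta(s+1)=s^{-2}+C+O(s)$ paired with the series of $\zeta(s)\lpa{1-2^{-1-s}}(z/(2\pi^2))^s$, and a tracking of the $\gamma$-cancellations that should leave precisely $\tfrac14\log\lpa{\pi/(2z)}$ as the residue, as in \eqref{eq:xi1-large}. Once the exact identities are established, the last sentence of the lemma is immediate: the defining series \eqref{eq:eta-def} is itself an asymptotic expansion as $|z|\to\infty$ in $\Re(z)>0$, dominated by its $\ell=1$ term, so the right-hand sides of \eqref{eq:xi0-large}--\eqref{eq:xi2-large} are genuine asymptotic expansions in that regime.
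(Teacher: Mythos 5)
Your proposal is correct and follows essentially the same Mellin--Barnes strategy as the paper: set up the Barnes integral for $\varphi_d$, shift the contour to collect the residues (using $\zeta(0)=-\tfrac12$, $\zeta(-1)=-\tfrac1{12}$, $\zeta'(-2)=-\zeta(3)/(4\pi^2)$), and recognize the reflected remainder integral as the Mellin transform of $\eta_d$ via two applications of the functional equation \eqref{eq:zeta-eq} with the intermediate $\sin/\cos$ factors cancelling. The only differences are cosmetic — the paper works out $d=2$ in detail and declares $d=0,1$ similar while you treat all three uniformly, and the natural involution for general $d$ is $s\mapsto 2-2d-s$ rather than $s\mapsto -s$ (which is the $d=1$ case) — but your pole analysis, residue values, and final prefactor $(-1)^d\pi^{2d-2}/(2z^{2d-2})$ all check out.
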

\begin{proof}
We apply the same Mellin transform techniques, together with the
functional equation \eqref{eq:zeta-eq} for the Riemann zeta function,
as in the previous section.

Consider first $\xi_2(z)$. By direct calculations using 
\eqref{eq:hurwitz-sum}, we have 
\[
    \xi_2(z) 
    = -\frac2{\pi^2} \cdot\frac1{2\pi i} 
    \int_{(3/2)} X_2(s)z^s \dd s,
\]
where
\[
    X_2(s) = \Gamma(s) \zeta(s) 
    (1-2^{-3-s})\zeta(3+s) (2\pi^2)^{-s}.
\] 
By a similar analysis as in the proof of 
Proposition~\ref{eq:f-expansion}, we deduce that 
\[
    \xi_2(z) 
    = -\frac2{\pi^2}\llpa{\sum_{-2\le k\le 1} \Res_{s=k}(X_2(s)z^s) 
    + \frac1{2\pi i} \int_{(-5/2)} X_2(s)z^s\dd s}.
\]
The sum of the residues yields the first four terms on the right-hand 
side of \eqref{eq:xi2-large}. We then simplify the integral
\[
    \int_{(-5/2)} X_2(s)z^s\dd s
    = \int_{(1/2)} X_2(-2-s)z^{-s-2}\dd s.
\]
By \eqref{eq:zeta-eq},
\begin{align*}
    X_2(-2-s) 
    &= \Gamma(-2-s)\zeta(-2-s)(1-2^{s-1})
      \zeta(1-s)\lpa{2\pi^2}^{s+2} \\
    &= \frac{\pi^2}{2} (1-2^{1-s}) 
      \zeta(s+3)\Gamma(s)\zeta(s),
\end{align*}
which is nothing but the Mellin transform of $\frac{\pi^2}2 
\eta_2(z)$. This proves \eqref{eq:xi2-large}.

The proofs of the other two identities \eqref{eq:xi0-large} and 
\eqref{eq:xi1-large} are similar, and omitted. 
\end{proof}

\begin{coro} Assume $|\tau|\to0$ in the half-plane $\Re(\tau)>0$. 
Then the function $\lambda_m\lpa{e^{-4\pi^2/\tau}}$ satisfies: $(i)$ 
if $m|\tau|\le1$, then 
\begin{equation}\label{eq:lambda-asymp1}
	\lambda_m\lpa{e^{-4\pi^2/\tau}}
    = m^2 \xi_2(m\tau)+\xi_1(m\tau)+
    O\lpa{m^{-2}e^{-\Re(2\pi^2/(m\tau))}};
\end{equation}
$(\text{ii})$ if $m|\tau|\ge1$, then 
\begin{equation}\label{eq:lambda-asymp2}
\begin{split}
	\lambda_m\lpa{e^{-4\pi^2/\tau}}
    &= m^2 \xi_2(m\tau)+\xi_1(m\tau)+O(|\tau|^2)\\
    &= \frac{\zeta(3)-2\eta_2(m\tau)}{2\tau^2}
    -\frac{\pi^2m}{24\tau}+\frac{7\zeta(3)}{8\pi^2}\,m^2
    -\frac{m^3\tau}{96}+\frac{5m\tau}{96}\\
    &\qquad-\frac{5}{24}\log\Lpa{\frac{2m\tau}{\pi}}
    -\frac{5\eta_1(m\tau)}{12}+O(|\tau|^2). 
\end{split}    
\end{equation}
\end{coro}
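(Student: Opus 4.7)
The plan is to feed the two size regimes (i) $m|\tau|\le 1$ and (ii) $m|\tau|\ge 1$ into the unified representation already supplied by Proposition~\ref{prop:E01-split}, namely
\[
    \lambda_m\lpa{e^{-4\pi^2/\tau}}
    = \lpa{1+O\lpa{e^{-\Re(2\pi^2/\tau)}}}
    \lpa{m^2\xi_2(m\tau) + \xi_1(m\tau) + O\lpa{m^{-2}|\xi_0(m\tau)|}},
\]
using in each regime the most favourable representation of the auxiliary functions $\xi_d$.

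In regime (i) I would invoke the defining series \eqref{eq:xi-d-z}--\eqref{eq:phi} directly: since $m\tau$ has small modulus, each $\varphi_d(m\tau)$ is itself an asymptotic expansion whose leading exponential scale is $e^{-\Re(2\pi^2/(m\tau))}$, so $m^2|\xi_2(m\tau)|$, $|\xi_1(m\tau)|$, and $m^{-2}|\xi_0(m\tau)|$ are all of order at most $m^2e^{-\Re(2\pi^2/(m\tau))}$. The extra multiplicative factor $1+O(e^{-\Re(2\pi^2/\tau)})$ then contributes a correction of size $m^2e^{-\Re(2\pi^2/\tau+2\pi^2/(m\tau))}$; because $\Re(1/\tau)\ge\Re(1/(m\tau))$ whenever $m\ge1$ and because $m|\tau|\le 1$ forces $\Re(1/\tau)\gg\log m$, the polynomial prefactor is absorbed into a sliver of the exponential, and the whole multiplicative correction drops below the stated additive error $O(m^{-2}e^{-\Re(2\pi^2/(m\tau))})$. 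This yields \eqref{eq:lambda-asymp1}.

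In regime (ii) I would instead invoke Lemma~\ref{lmm:phi-large} to replace each $\xi_d(m\tau)$ by the explicit expansion \eqref{eq:xi0-large}--\eqref{eq:xi2-large}. The additive error becomes $m^{-2}|\xi_0(m\tau)|=O(\tau^2+m^{-2})=O(\tau^2)$, where the last step uses the defining hypothesis $m^{-2}\le\tau^2$. The multiplicative correction $O(e^{-\Re(2\pi^2/\tau)})$ times the polynomial and logarithmic main terms (whose moduli are bounded by a fixed negative power of $|\tau|$, since $m\le|\tau|^{-1}$) is subsumed into $O(\tau^2)$ by the elementary fact that $|\tau|^{-C}e^{-\Re(2\pi^2/\tau)}=O(|\tau|^2)$ as $|\tau|\to0$. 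It then remains only to expand $m^2\xi_2(m\tau)+\xi_1(m\tau)$ using \eqref{eq:xi1-large}--\eqref{eq:xi2-large} and to apply the identities
\[
    \frac{m^2\zeta(3)}{2(m\tau)^2}-\frac{m^2\eta_2(m\tau)}{(m\tau)^2}
    = \frac{\zeta(3)-2\eta_2(m\tau)}{2\tau^2}
    \and
    \log\lpa{\pi/(2m\tau)}=-\log\lpa{2m\tau/\pi},
\]
to recover term by term the displayed formula on the right of \eqref{eq:lambda-asymp2}.

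The only non-routine issue will be the uniform bookkeeping across the boundary $m|\tau|\sim 1$: one must confirm that, in each regime, the multiplicative correction $O(e^{-\Re(2\pi^2/\tau)})$ remains exponentially smaller than whichever additive bound is stated, despite polynomial factors as large as $m^3|\tau|\asymp|\tau|^{-2}$ that appear in the main terms. This is exactly the role played by the inequality $\Re(1/\tau)\ge\Re(1/(m\tau))$ for $m\ge 1$, which channels the exponential smallness to the correct scale on each side of the transition, combined with the standard exponential-beats-polynomial estimate already invoked above.
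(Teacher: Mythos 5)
Your proposal is correct and follows exactly the route the paper leaves implicit: the corollary is a direct combination of Proposition~\ref{prop:E01-split} with either the defining series \eqref{eq:xi-d-z}--\eqref{eq:phi} (regime $m|\tau|\le 1$, where $\xi_d(m\tau)$ is already an asymptotic scale, and $|\xi_0(m\tau)| = O(e^{-\Re(2\pi^2/(m\tau))})$) or with Lemma~\ref{lmm:phi-large} (regime $m|\tau|\ge 1$, where $m^{-2}|\xi_0(m\tau)|=O(|\tau|^2+m^{-2})=O(|\tau|^2)$). Your bookkeeping of the multiplicative correction $1+O(e^{-\Re(2\pi^2/\tau)})$ is the right check — in regime (i) one needs $m^4e^{-\Re(2\pi^2/\tau)}\to 0$, which follows from $m\le 1/|\tau|$ and the standard exponential-beats-polynomial bound (valid in the cone $|\arg\tau|\le\pi/2-\ve$ used throughout the paper's Mellin analysis, which is the implicit meaning of ``$\Re(\tau)>0$'' here); in regime (ii) the factor $|\tau|^{-2}e^{-\Re(2\pi^2/\tau)}=o(|\tau|^2)$ handles the largest main term. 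The term-by-term recombination giving the second display of \eqref{eq:lambda-asymp2} is the algebra you state.
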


\section{Asymptotics of $G_{n,m}$}
\label{sec:Gnm}

Our analytic approach to the asymptotics of 
$G_{n,m}$ relies on the Cauchy integral formula
\[
    G_{n,m} 
    = [z^n]G_m(z) 
    = \frac1{2\pi i} \oint_{|z|=e^{-\rho}} z^{-n-1} G_m(z) \dd z
    \qquad(\rho>0).
\]
Since $G_m(e^{-\tau})$ grows very fast near the singularity $\tau=0$
(see \eqref{eq:q-expr-exact}), we will apply the saddle-point method
to the integral on the right-hand side. We derive first crude (but
effective) approximations to $G_{n,m}$ and then sketch our approach
to refining them, more details being given in the next sections.

\subsection{Crude bounds}

By the nonnegativity of the coefficients, we have the simple 
inequality
\begin{align*}
    G_{n,m} 
    &\le e^{n\rho} G_m(e^{-\rho})\\
    &= \exp\Lpa{(n+\phi_m)\rho +\frac{\varpi_m}{\rho}
    +\kappa_m\lpa{e^{-4\pi^2/\rho}}
    +\lambda_m\lpa{e^{-4\pi^2/\rho}}}
    \qquad(n, m\ge 1).
\end{align*}
Here $\rho=\rho(n,m)>0$ is taken to be the saddle-point, namely, it 
satisfies the equation 
\[
    nG_m(e^{-\rho}) = e^{-\rho}G_m'(e^{-\rho}),
    \quad\text{or}\quad
    n + \phi_m = \frac{\varpi_m}{\rho^2}-
    \partial_\rho \Lpa{\kappa_m\lpa{e^{-4\pi^2/\rho}}
    +\lambda_m\lpa{e^{-4\pi^2/\rho}}}.
\]
Consider first the case when $m$ is not too large. More precisely, if 
\[
    \kappa_m\lpa{e^{-4\pi^2/\rho}}
    +\lambda_m\lpa{e^{-4\pi^2/\rho}}
    = O\lpa{m^2e^{-2\pi^2/(m\rho)}}
    = o\Lpa{\frac{\varpi_m}{\rho}} \asymp \frac{m}{\rho},
\]
or, simply $m\rho\to0$, then, by \eqref{eq:kappa-asymp} and
\eqref{eq:lambda-asymp1}, the saddle-point satisfies
\[
    n+\phi_m \sim \frac{\varpi_m}{\rho^2},
    \quad\text{or}\quad
    \rho \sim \sqrt{\frac{\varpi_m}{n+\phi_m}}.
\]
Thus $\rho$ is of order $\sqrt{m/n}$, which in turn specifies the
range of $m$: $m\rho \asymp m^{3/2}/n^{1/2}\to0$, or $m=o(n^{1/3})$. 
In this range of $m$, we see that
\[
	\log G_{n,m}\le 2\sqrt{(n+\phi_m)\varpi_m}(1+o(1))
	\sim \frac{\pi}{\sqrt{6}}\,\sqrt{mn},
\]
which is tight when compared with the asymptotic estimate in 
\eqref{eq:logG-small}. Note that $\kappa_m(e^{-4\pi^2/\rho})$ is not 
uniformly $o(1)$ in this range, although it is of a smaller order 
than $m/\rho$; indeed, if 
\begin{align}\label{eq:m-small-0}
	m\le \frac{6\pi^{2/3}n^{1/3}}
	{(\log n-2\log \log n+\log \omega_n)^{2/3}},
\end{align}
for any sequence $\omega_n$ tending to infinity, then 
\[
	\kappa_m(e^{-4\pi^2/\rho})
	\asymp m^2 e^{-2\pi^2/(m\rho)}
	\asymp \omega_n^{-2/3}\to0.
\]

For larger $m$ with $m\rho\ge \ve>0$, we use \eqref{eq:lambda-m-ae} 
and Lemma~\ref{lmm:phi-large}, giving
\[
    \log G_m(e^{-\rho})
    = \frac{\zeta(3)}{2 \rho^2}+\frac{\pi^2}{24\rho}
    + \frac{\log \rho}{24}
    + O(1),
\]
as $\rho\to0$ and $m\rho\to\infty$. Thus the saddle-point $\rho$ 
satisfies 
\[
	\rho \sim \Lpa{\frac{\zeta(3)}2}^{1/3}n^{-1/3},
\]
implying that 
\[
	\log G_{n,m} \le \frac{3\zeta(3)^{1/3}}{2}\, n^{2/3}
	(1+o(1)),
\]
which is also tight in view of \eqref{eq:logG-m-large}. 

\subsection{The uniform saddle-point approximation}

The tightness of the crude bounds in the previous subsections is
well-known. We now refine these bounds and derive a uniform
asymptotic approximate for $G_{n,m}$.

For convenience, let $\Lambda(z) := \log G_m(z)$ and write the Taylor 
expansion
\begin{align}\label{eq:Lambda}
	\Lambda(e^{-\rho(1+it)})
	= \sum_{k\ge0}\frac{\Lambda_k(\rho)}{k!}(-it)^k,
    \with \Lambda_k(\rho) := 
	\rho^k\sum_{0\le j\le k}\Stirling{k}{j}
	e^{-j\rho}\Lambda^{(j)}(e^{-\rho}),
\end{align}
where $\Stirling{k}{j}$ denotes Stirling numbers of the second kind. 
In particular, 
\[
	\Lambda_1(\rho) = \rho e^{-\rho} \Lambda'(e^{-\rho}),
    \quad\text{and}\quad
	\Lambda_2(\rho) = 
    \rho^2\lpa{e^{-\rho}\Lambda'(e^{-\rho})
	+ e^{-2\rho}\Lambda''(e^{-\rho})}.
\]
As we will see below, each $\Lambda_k(\rho)$ is of the same order as 
$\Lambda(e^{-\rho}) = \log G_m(e^{-\rho})$. 

\begin{thm} \label{thm:Gnm-spa}
Uniformly for $m\ge1$
\begin{align}\label{eq:Gnm-ge}
    G_{n,m} = \frac{\rho e^{n\rho}G_m(e^{-\rho})}{\sqrt{2\pi 
    \Lambda_2(\rho)}}
	\lpa{1+O\lpa{\Lambda_2(\rho)^{-1}}},
\end{align}
where $\rho>0$ solves the equation 
\begin{align}\label{eq:sp-eq}
    n\rho-\Lambda_1(\rho)=0,
    \quad\text{or}\quad
    n = - \partial_\tau \log G_m(e^{-\tau})\bigl|_{\tau=\rho}.
\end{align}
\end{thm}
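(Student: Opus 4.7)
The plan is to apply a saddle-point estimation to Cauchy's formula
$$G_{n,m} = \frac1{2\pi i}\oint_{|z|=e^{-\rho}}z^{-n-1}G_m(z)\dd z,$$
taking care to keep all errors uniform in $m$. The substitution $z = e^{-\rho(1+it)}$ converts the contour integral into
$$G_{n,m} = \frac{\rho}{2\pi}\int_{-\pi/\rho}^{\pi/\rho} \exp\Lpa{n\rho(1+it) + \Lambda\lpa{e^{-\rho(1+it)}}}\dd t.$$
Inserting the Taylor expansion \eqref{eq:Lambda} of $\Lambda(e^{-\rho(1+it)})$ about $t=0$ and using the saddle-point equation \eqref{eq:sp-eq} to cancel the linear-in-$t$ term gives
$$G_{n,m} = \frac{\rho\, e^{n\rho}G_m(e^{-\rho})}{2\pi}\int_{-\pi/\rho}^{\pi/\rho}\exp\Lpa{-\frac{\Lambda_2(\rho)}{2}t^2 + \sum_{k\ge 3}\frac{\Lambda_k(\rho)}{k!}(-it)^k}\dd t.$$
The task thus reduces to showing that the remaining integral equals $\sqrt{2\pi/\Lambda_2(\rho)}\lpa{1+O\lpa{\Lambda_2(\rho)^{-1}}}$, uniformly in $m$.

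For the central range, I pick a cutoff $t_0 = t_0(\rho,m)$ with $t_0^2\Lambda_2(\rho)\to\infty$ and $t_0^3\Lambda_2(\rho)\to 0$ (for instance $t_0 := \Lambda_2(\rho)^{-2/5}$). A preliminary ingredient is the uniform family of bounds $\Lambda_k(\rho) = O_k(\Lambda_2(\rho))$ for each fixed $k\ge 2$; these follow by differentiating the exact expression \eqref{eq:rm-expr} term by term and invoking the asymptotic estimates \eqref{eq:kappa-asymp} and \eqref{eq:lambda-asymp1}--\eqref{eq:lambda-asymp2} for $\kappa_m$ and $\lambda_m$, together with the verification that $\Lambda_2, \Lambda_3, \Lambda_4$ all grow at the same rate as $\Lambda(e^{-\rho})$ in both regimes $m\rho\to 0$ and $m\rho\gg 1$. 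On $|t|\le t_0$, the exponent $z(t) := \sum_{k\ge 3}\Lambda_k(\rho)(-it)^k/k!$ satisfies $|z(t)|\le C\Lambda_2(\rho)|t|^3\to 0$, so one may write $e^{z(t)} = 1 + z(t) + \tfrac12 z(t)^2 + O(|z(t)|^3)$. Integration against the Gaussian $e^{-\Lambda_2(\rho)t^2/2}$ then yields the main term $\sqrt{2\pi/\Lambda_2(\rho)}$ plus corrections of relative order $\Lambda_4(\rho)/\Lambda_2(\rho)^2$ (from the quartic piece of $z$) and $\Lambda_3(\rho)^2/\Lambda_2(\rho)^3$ (from $z^2$), both $O(\Lambda_2(\rho)^{-1})$; odd powers vanish by symmetry, and the cubic-in-$z$ remainder contributes only at relative order $\Lambda_2(\rho)^{-3/2}$. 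Extending the truncated Gaussian integral from $[-t_0,t_0]$ to $\mathbb{R}$ introduces an exponentially small error $\exp\lpa{-\tfrac12\Lambda_2(\rho)t_0^2}$, negligible compared with $\Lambda_2(\rho)^{-1}$.

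The main obstacle is the tail estimate: showing that the integral over $t_0\le|t|\le\pi/\rho$ is exponentially smaller than $\sqrt{2\pi/\Lambda_2(\rho)}$, uniformly in $m$. The product representation \eqref{eq:han-xiong}--\eqref{eq:q} of $G_m(z)$ lets us write
$$\Re\log\frac{G_m(e^{-\rho(1+it)})}{G_m(e^{-\rho})} = -\sum_{k\ge 1}\frac{a_m(k)\, e^{-k\rho}\lpa{1-\cos(k\rho t)}}{k},$$
with nonnegative coefficients $a_m(k)$ encoding the exponent multiplicities of $P(z)Q_m(z)$ coming from \eqref{eq:wmj}. What has to be shown is that the right-hand side is bounded above by a negative constant times $\Lambda_2(\rho)$ whenever $|t|\ge t_0$, which is a Meinardus-type minor-arc estimate: on a positive-density subset of indices $k\le 1/\rho$, the factor $1-\cos(k\rho t)$ must remain bounded away from zero when $|t|\ge t_0$. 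Making this quantitative and uniform across the whole range $1\le m\le n$ (rather than merely for fixed $m$) is the delicate technical point; it can be achieved by revisiting the Mellin-based description of $\log G_m(e^{-\tau})$ developed in Section~\ref{sec:asym-constant}, so that the damping rate stays of order $\Lambda_2(\rho)$ in every phase. Combining the central main term with the resulting tail bound then yields the advertised formula \eqref{eq:Gnm-ge}.
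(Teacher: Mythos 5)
Your outline follows the same high-level route the paper takes: Cauchy's formula, a central/tail split, a Gaussian local approximation on the central arc, and a minor-arc estimate on the tail. The central-arc analysis is essentially right; the relative errors $\Lambda_4/\Lambda_2^2$ and $\Lambda_3^2/\Lambda_2^3$ are indeed $O(\Lambda_2^{-1})$. One difference of detail: for the uniform bounds $\Lambda_k(\rho)=O_k(\Lambda_2(\rho))$ you propose term-by-term differentiation of \eqref{eq:rm-expr} and the asymptotic estimates for $\kappa_m$ and $\lambda_m$; the paper's Lemma~\ref{eq:Lk} obtains the same via a Cauchy-integral (Ritt-type) bound, which is cleaner and automatically inherits uniformity from the bound \eqref{eq:log-Gm-ue} on $\Lambda$ itself.

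The genuine gap is the tail estimate, which you correctly identify as the delicate point but do not prove. Saying that the Meinardus-type bound "can be achieved by revisiting the Mellin-based description ... so that the damping rate stays of order $\Lambda_2(\rho)$ in every phase'' is an assertion, not an argument; in a theorem whose entire content is the uniformity in $m$ (from $m\rho\to0$ through $m\rho\asymp1$ to $m\rho\to\infty$), this is the part that has to be done. The paper does it (Proposition~\ref{prop:O}, Section~\ref{sec:just}) by extracting from $\log G_m$ a single explicit comparison function $V_m(z)=z(1-z^m)/\bigl(2(1-z)^2(1+z^m)\bigr)$ via the nonnegative-coefficient decomposition \eqref{eq:log-Gmz}--\eqref{eq:UVmz}, checking that $V_m(e^{-\rho})\asymp n\rho\asymp\Lambda_2(\rho)$ in all three regimes, and then proving the quantitative two-regime bound \eqref{eq:st} ($\le 1-c\rho^{-2}t^2$ for $|t|\le\rho$, $\le 7/8$ for $\rho\le|t|\le\pi$) by a four-case analysis depending on $m\rho\lessgtr\pi$. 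This is not a routine positive-density argument: the location of the poles of $V_m$ at $e^{(2j+1)\pi i/m}$ depends on $m$, and controlling the peak of $|V_m(e^{-\rho-it})|$ near $t=\pi/m$ uniformly as $m\rho$ varies is what takes the work. Your proposal would need to supply the analogue of \eqref{eq:st} before it can be considered a proof.

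Two minor accuracy notes. First, the representation of $\Re\log\bigl(G_m(e^{-\rho(1+it)})/G_m(e^{-\rho})\bigr)$ as $-\sum_k a_m(k)e^{-k\rho}(1-\cos(k\rho t))/k$ is only the $\ell=1$ layer of the double sum from $-\sum_k a_m(k)\log(1-z^k)$; the paper's inequality \eqref{eq:log-Gm} is the correct one-sided version, obtained by discarding the $\ell\ge2$ layers using nonnegativity. Second, the paper uses $\delta=(n\rho)^{-2/5}$ and phrases the cutoff in the unscaled variable ($|t|\le\delta\rho$); your $t_0=\Lambda_2(\rho)^{-2/5}$ in the scaled variable is equivalent since $\Lambda_2(\rho)\asymp n\rho$, but it is worth noting that the identification $\Lambda_2(\rho)\asymp n\rho$ itself requires the uniform estimates \eqref{eq:log-Gm-ue}, which rest on the same exact expression \eqref{eq:rm-expr} you are also invoking for the central arc.
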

The extra factor $\rho$ in \eqref{eq:Gnm-ge} is cancelled out with a 
factor $\rho^2$ in $\sqrt{\Lambda_2(\rho)}$. 

We will prove Theorem~\ref{thm:Gnm-spa} in Section~\ref{sec:spa}. The
justification of the finer saddle-point approximation (55) consists
of the following two propositions, which will be proved in Sections
\ref{sec:just} and \ref{sec:local}, respectively.
 
\begin{prop} \label{prop:O}
Let $\delta := (n\rho)^{-2/5}>0$. Then for a certain constant $c'>0$, 
\begin{align}\label{eq:int-O}
    \int_{\delta\rho\le |t|\le\pi}
    e^{n(\rho+it)} G_m(e^{-\rho-it}) \dd t
    =O\lpa{e^{n\rho}G_m(e^{-\rho})e^{-c'(n\rho)^{1/5}}}.
\end{align}
\end{prop}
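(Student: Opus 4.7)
The plan is to establish a pointwise bound $|G_m(e^{-\rho-it})|/G_m(e^{-\rho}) \le e^{-c'(n\rho)^{1/5}}$ uniformly for $\delta\rho \le |t| \le \pi$; the integral bound then follows by multiplying by the length of the arc, which is $O(1)$. I would split the arc into a near range $\delta\rho \le |t| \le u_0\rho$ and a far range $u_0\rho \le |t| \le \pi$, with $u_0 > 0$ a small absolute constant to be fixed below.

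In the near range, I would apply the Taylor expansion \eqref{eq:Lambda} with $u := t/\rho$. Since each $\Lambda_k(\rho)$ is real, only even powers of $u$ contribute to the real part, giving
\[
\Re \Lambda(e^{-\rho(1+iu)}) - \Lambda(e^{-\rho})
= -\tfrac{1}{2} u^2 \Lambda_2(\rho)
+ \sum_{j \ge 2}\frac{(-1)^j u^{2j}}{(2j)!}\Lambda_{2j}(\rho).
\]
The asymptotic analysis of Section~\ref{sec:asym-constant}, combined with the saddle-point equation \eqref{eq:sp-eq}, yields $\Lambda_2(\rho) \asymp \varpi_m/\rho \asymp n\rho$ and $\Lambda_{2j}(\rho) = O(\Lambda_2(\rho))$ uniformly in $m$ for each $j$. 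Taking $u_0$ small enough so that the quadratic term dominates all higher-order contributions, one obtains $\Re\Lambda - \Lambda(e^{-\rho}) \le -\tfrac{1}{3} u^2 \Lambda_2(\rho)$, and $|u| \ge \delta$ then yields a decay of at least $\tfrac{1}{3}\delta^2 \Lambda_2(\rho) \asymp (n\rho)^{1/5}$, as required.

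In the far range, I would use the exact expression \eqref{eq:rm-expr} directly. The main contribution from $\varpi_m/\tau$ alone gives
\[
\Re\Lpa{\frac{\varpi_m}{\rho+it}} - \frac{\varpi_m}{\rho}
= -\frac{\varpi_m t^2}{\rho(\rho^2+t^2)}
\le -\frac{u_0^2}{1+u_0^2}\cdot\frac{\varpi_m}{\rho}
\asymp -n\rho,
\]
a decay far larger than $(n\rho)^{1/5}$. The term $\phi_m\tau$ contributes nothing to the real part, while $\tfrac12\log\tau$ contributes only $O(\log(1/\rho))$. The main obstacle is controlling $\kappa_m(e^{-4\pi^2/\tau}) + \lambda_m(e^{-4\pi^2/\tau})$: since $|e^{-4\pi^2/\tau}|$ can be close to $1$ when $|t|$ is of order $1$ and $\rho$ is small, the estimates of Section~\ref{sec:asym-constant} (developed under $|e^{-4\pi^2/\tau}|$ small) do not apply directly. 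I would circumvent this either by switching to the direct product representation $G_m(z) = \prod_{k\ge 1}(1-z^k)^{-a_k(m)}$ and using the identity $|1-e^{-k(\rho+it)}|^2 = (1-e^{-k\rho})^2 + 4 e^{-k\rho}\sin^2(kt/2)$ to bound $\log G_m(e^{-\rho}) - \Re\log G_m(e^{-\rho-it})$ termwise from below, or by exploiting that $\log|G_m(e^{-\rho-it})| \le \log G_m(e^{-\rho})$ pointwise (from the non-negativity of Taylor coefficients of $G_m$) to control the sign of the net error contributed by $\kappa_m + \lambda_m$.
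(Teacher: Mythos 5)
Your near-range argument is a legitimate alternative to the paper's approach in that zone: the paper's own Lemma~\ref{eq:Lk} (Ritt-type bound) gives $\Lambda_{2j}(\rho)=O(\Lambda_2(\rho))$ for each fixed $j$, and with the $(2j)!/c^{2j}$-type growth of the implicit constants coming from Ritt's lemma, the series $\sum_{j\ge2}u^{2j}\Lambda_{2j}(\rho)/(2j)!$ is $O(u^4\Lambda_2(\rho))$ for $|u|\le u_0$ with $u_0$ below the relevant radius; so domination by the quadratic term holds and yields the decay $\asymp\delta^2\Lambda_2(\rho)\asymp(n\rho)^{1/5}$. (The paper instead gets this decay from the uniform bound $|V_m(e^{-\rho-it})|/V_m(e^{-\rho})\le 1-c\rho^{-2}t^2$ in \eqref{eq:st}, which is what its Lemma on $V_m$ delivers in a form that also covers the far range with the same object.)

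The far-range part is where the proposal has a genuine gap. You correctly observe that for $|t|\asymp1$ the point $e^{-4\pi^2/\tau}$ approaches the unit circle, so $\kappa_m$ and $\lambda_m$ are evaluated where they blow up and the small-$|z|$ expansions of Section~\ref{sec:asym-constant} do not apply; you cannot deduce the required decay from $\Re(\varpi_m/\tau)$ alone, because $\Re(\kappa_m+\lambda_m)$ can cancel much of it. Of your two proposed workarounds, the second one does not work: nonnegativity of the Taylor coefficients of $G_m$ gives only the trivial inequality $|G_m(e^{-\rho-it})|\le G_m(e^{-\rho})$, which cannot produce the extra factor $e^{-c'(n\rho)^{1/5}}$; it controls no ``sign of the net error'' in the sense you need. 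The first workaround --- bounding termwise in a product/Lambert-type representation --- is essentially the route the paper takes, but it is the hard part, not a quick fix. The paper first rewrites $\log G_m(z)=\sum_{\ell\ge1}U_m(z^\ell)/\ell$, then isolates the explicit rational function $V_m(z)=z(1-z^m)/\bigl(2(1-z)^2(1+z^m)\bigr)$ from $U_m$ using the decomposition~\eqref{eq:UVmz} (all three summands with nonnegative coefficients), reducing the whole problem to the uniform estimate \eqref{eq:st} on $|V_m(e^{-\rho-it})|/V_m(e^{-\rho})$. Establishing \eqref{eq:st}, in particular the bound $7/8$ on $\rho\le|t|\le\pi$ uniformly over $3\le m\le n$, takes several cases (splitting on $m\rho\lessgtr\pi$, treating the competing singularities of $1/(1-z)^2$ and $(1-z^m)/(1+z^m)$), and without that analysis the far-range bound is not proved. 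In short: the plan identifies the obstacle and the right kind of tool, but the decisive step is left unexecuted.
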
  
\begin{prop} \label{prop:I}
Let $\delta := (n\rho)^{-2/5}>0$. Then, uniformly for 
$|t|\le\delta$, the Taylor expansion \eqref{eq:Lambda} is itself an 
asymptotic expansion as $|t|\to0$.    
\end{prop}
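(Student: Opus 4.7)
The plan is to combine the exact representation \eqref{eq:rm-expr} of $\lambda(\tau):=\Lambda(e^{-\tau}) = \log G_m(e^{-\tau})$ with the fine asymptotics of Section~\ref{sec:asym-constant} to establish a uniform bound on $|\Lambda_k(\rho)/k!|$, and then to deduce the asymptotic-scale property on $|t|\le\delta$. Throughout, I would use the identity $\Lambda_k(\rho) = (-\rho)^k\lambda^{(k)}(\rho)$, which follows from the operator relation $(-z\partial_z)^k = (-1)^k\sum_{j}\Stirling{k}{j}z^j\partial_z^j$ applied to $\Lambda(z)$ at $z=e^{-\rho}$, and is equivalent to the definition of $\Lambda_k(\rho)$ in \eqref{eq:Lambda}.

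First, I decompose $\lambda(\tau)$ using \eqref{eq:rm-expr} into elementary and transcendental pieces. The elementary contributions $\rho^k\partial_\tau^k(\varpi_m/\tau)\big|_{\tau=\rho}$ and $\rho^k\partial_\tau^k(\tfrac12\log\tau)\big|_{\tau=\rho}$ equal, up to a sign, $k!\varpi_m/\rho$ and $\tfrac12(k-1)!$ respectively, while $\log g_m$ and $\phi_m\tau$ affect only $\Lambda_0$ and $\Lambda_1$. For the transcendental pieces $\kappa_m(e^{-4\pi^2/\tau})$ and $\lambda_m(e^{-4\pi^2/\tau})$, I invoke Proposition~\ref{prop:E01-split} and Lemma~\ref{lmm:phi-large}: in the regime $m\rho \le 1$ both contributions are, together with all their $\tau$-derivatives, exponentially small in $1/(m\rho)$ and dominated by $\varpi_m/\rho$; in the regime $m\rho\ge 1$ the identities \eqref{eq:lambda-asymp2} and \eqref{eq:xi0-large}--\eqref{eq:xi2-large} expose elementary pieces $(\zeta(3)-2\eta_2(m\tau))/(2\tau^2)$, $-\pi^2 m/(24\tau)$, etc., whose $k$-th $\tau$-derivatives again produce terms of order $k!\rho^{-2}$, the $\eta_d$- and $\varphi_d$-series contributing only exponentially small corrections.

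Second, collecting these contributions and using the saddle-point equation \eqref{eq:sp-eq} together with $\Lambda_1(\rho)\asymp\Lambda(e^{-\rho})$ (in every regime, the dominant piece of $\Lambda(e^{-\rho})$, either $\varpi_m/\rho$ or $\zeta(3)/(2\rho^2)$, also dominates $\Lambda_1(\rho)$), one obtains, with constants independent of $k$ and $m$,
\[
    \left|\frac{\Lambda_k(\rho)}{k!}\right| = O\Lpa{\frac{\varpi_m}{\rho}+\frac{1}{\rho^2}} = O\lpa{\Lambda(e^{-\rho})} = O(n\rho).
\]
Substituting into \eqref{eq:Lambda} for $|t|\le\delta=(n\rho)^{-2/5}$ then yields
\[
    \left|\frac{\Lambda_k(\rho)}{k!}(-it)^k\right| = O\lpa{n\rho\cdot\delta^k} = O\lpa{(n\rho)^{1-2k/5}},
\]
which is geometrically decreasing in $k$ with ratio $\delta\to 0$, since $n\rho\to\infty$ in every regime of $m$ at the saddle point. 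Consequently, truncating \eqref{eq:Lambda} after any finite $K$ terms leaves a remainder of the same order as the $(K+1)$-th term, which is precisely the asymptotic-expansion property required.

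The principal obstacle is maintaining the $k$-independent bound on the $\tau$-derivatives of $\lambda_m(e^{-4\pi^2/\tau})$ through the transitional regime $m\rho\asymp 1$, where neither asymptotic form of Lemma~\ref{lmm:phi-large} is unambiguously dominant. Handling this requires combining the series representations for $\xi_d$, $\varphi_d$ and $\eta_d$ with careful bookkeeping of exponentially small residual terms; since each such series converges absolutely in the half-plane $\Re(z)>0$, termwise differentiation remains justified uniformly in $k$, and the required bounds can be obtained by patching the two subcases of Lemma~\ref{lmm:phi-large} on the common range $m\rho\asymp 1$.
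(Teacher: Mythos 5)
You take a genuinely different route from the paper. The paper's proof of Proposition~\ref{prop:I} is a two-line application of Ritt's lemma: the unnumbered lemma preceding it (yielding the bound \eqref{eq:log-Gm-ue} on $\log G_m(e^{-\tau})$ in a cone around the positive real axis) gives a bound on $|\Lambda(w)|$ in a small disk of radius $c\rho e^{-\rho}$ around $e^{-\rho-it}$, and the Cauchy integral formula for $\Lambda^{(k)}$ then produces $|\Lambda^{(k)}(e^{-\rho-it})| = O(\rho^{-k}\Lambda(e^{-\rho}))$ with no further work. Your approach instead differentiates the exact decomposition \eqref{eq:rm-expr} piece by piece, using the explicit forms of $\kappa_m$, $\lambda_m$, $\xi_d$, $\eta_d$ from Sections~2--3. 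Your observation that $\Lambda_k(\rho)=(-\rho)^k\lambda^{(k)}(\rho)$ (with $\lambda(\tau)=\Lambda(e^{-\tau})$) is correct and tidies up the bookkeeping.

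There are, however, two issues. First, the claimed bound $\bigl|\Lambda_k(\rho)/k!\bigr| = O\bigl(\varpi_m/\rho + 1/\rho^2\bigr)$ ``with constants independent of $k$'' is not correct as stated: already the elementary piece $\zeta(3)/(2\tau^2)$ gives $\rho^k\partial_\tau^k(\zeta(3)/(2\tau^2))\big|_{\tau=\rho} = (-1)^k(k+1)!\,\zeta(3)/(2\rho^2)$, hence a factor $(k+1)$ rather than $O(1)$ in $\Lambda_k(\rho)/k!$. This polynomial $k$-dependence is ultimately harmless (only finitely many terms of \eqref{eq:Lambda} are used, and the geometric tail estimate $\sum_{k>K}(k+1)(n\rho)\delta^k=O\bigl((n\rho)(K+2)\delta^{K+1}\bigr)$ still works), but the assertion of $k$-independence is wrong and should be replaced.

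Second, and more substantially, your last paragraph correctly identifies the crux of your own approach — uniform control of $\rho^k\partial_\tau^k\lambda_m(e^{-4\pi^2/\tau})\big|_{\tau=\rho}$ through the regime $m\rho\asymp 1$, where $e^{-2\pi^2/(m\rho)}$ is only borderline small and repeated differentiation produces growing $(2\pi^2/(m\rho))^k$-type factors whose interplay with $k!$ needs to be tracked — but it does not actually resolve it. The natural way to close this gap is again a Cauchy/Ritt-type estimate applied to $\xi_2(m\tau)$, $\eta_d(m\tau)$, etc., at which point one has effectively reinvented the paper's argument, only applied to each summand of \eqref{eq:rm-expr} rather than once to $\Lambda$. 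So while your route is viable in principle and more explicit, it is strictly more work at exactly the step you leave open, whereas the paper's single application of Ritt's lemma to $\Lambda$ makes the transitional case disappear.
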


Note that $\delta=(n\rho)^{-2/5}>0$ is a specially tuned parameter,
chosen in the standard way such that $(n\rho)\delta^2\to\infty$ and
$(n\rho)\delta^3\to0$.

\subsection{Justification of the saddle-point method: proof of Proposition~\ref{prop:O}}
\label{sec:just}

Before proving Proposition~\ref{prop:O}, we derive a few useful 
expressions. 

\begin{lem} For $|z|<1$,
\begin{align}\label{eq:log-Gmz}
    G_m(z) = \exp\llpa{\sum_{\ell\ge1}\frac{U_m(z^\ell)}{\ell}},
	\with U_m(z) := \frac{z}{1-z}
    +\frac{z^3(1-z^{m-2})(1-z^{m-1})}
    {(1-z^{2m})(1-z)(1-z^2)}.
\end{align}    
\end{lem}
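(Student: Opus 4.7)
The plan is to start from the product formula for $G_m(z) = P(z)Q_m(z)$ in \eqref{eq:han-xiong} and compute $\log G_m(z)$ in closed form by expanding each logarithm as a power series and exchanging orders of summation. Absolute convergence for $|z|<1$ legitimizes all rearrangements, so the only real content is algebraic.

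First, applying $-\log(1-x) = \sum_{\ell\ge 1} x^\ell/\ell$ to $P(z) = \prod_{k\ge 1}(1-z^k)^{-1}$ and interchanging the sums in $k$ and $\ell$ yields
\begin{equation*}
    \log P(z)
    = \sum_{\ell\ge 1}\frac1{\ell}\sum_{k\ge 1} z^{k\ell}
    = \sum_{\ell\ge 1}\frac1{\ell}\cdot\frac{z^\ell}{1-z^\ell},
\end{equation*}
which reproduces the first term of $U_m$.

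Next, I would rewrite $Q_m$ in the form \eqref{eq:q}, take the logarithm, and expand:
\begin{equation*}
    \log Q_m(z)
    = \sum_{\ell\ge 1}\frac1{\ell}\sum_{1\le j<2m} w_m(j)
    \sum_{k\ge 0} z^{\ell(2mk+j)}
    = \sum_{\ell\ge 1}\frac1{\ell}\cdot\frac{W_m(z^\ell)}{1-z^{2m\ell}},
\end{equation*}
where the innermost geometric sum in $k$ gives the factor $(1-z^{2m\ell})^{-1}$ and $W_m$ is the polynomial defined in \eqref{eq:Wmz}. Substituting the closed form $W_m(z) = z^3(1-z^{m-1})(1-z^{m-2})/((1+z)(1-z)^2)$ from \eqref{eq:Wmz} and using the one-line algebraic identity $(1+z)(1-z)^2(1-z^{2m}) = (1-z)(1-z^2)(1-z^{2m})$ gives
\begin{equation*}
    \frac{W_m(z)}{1-z^{2m}}
    = \frac{z^3(1-z^{m-2})(1-z^{m-1})}{(1-z^{2m})(1-z)(1-z^2)},
\end{equation*}
which is precisely the second term of $U_m(z)$.

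Combining $\log P$ and $\log Q_m$ then yields the lemma. There is essentially no obstacle here: the only subtlety is justifying the interchange of the triple sums in the $Q_m$ computation, but absolute convergence of each $\sum_\ell z^\ell/(1-z^\ell)$-type series for $|z|<1$ handles this immediately. All else is bookkeeping and the trivial identity on $(1+z)(1-z)^2$.
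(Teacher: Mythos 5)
Your proof is correct and follows essentially the same route as the paper's: expand the logarithms of the product formula \eqref{eq:han-xiong} (via \eqref{eq:q}), interchange sums, sum the geometric series in $k$ to produce the factor $(1-z^{2m\ell})^{-1}$, and then substitute the closed form \eqref{eq:Wmz} for $W_m$; the only cosmetic difference is that you explicitly spell out the trivial identity $(1+z)(1-z)^2 = (1-z)(1-z^2)$, which the paper leaves implicit.
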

\begin{proof}
By \eqref{eq:q}, we have, for $|z|<1$,
\begin{align*}
    \log G_m(z) &= -\sum_{k\ge1}\log(1-z^k)
    - \sum_{1\le j<2m}w_m(j)\sum_{k\ge0}\log(1-z^{2mk+j})\\
    &= \sum_{\ell\ge1}\frac{z^\ell}{\ell(1-z^\ell)}
    +\sum_{1\le j<2m}w_m(j)\sum_{\ell\ge1}
    \frac{z^{j\ell}}{\ell(1-z^{2m\ell})}.
\end{align*}
Thus
\[
    U_m(z) = \frac{z}{1-z}
    + \frac{1}{1-z^{2m}}\sum_{1\le j<2m}w_m(j)z^j.
\]
Then \eqref{eq:log-Gmz} follows from \eqref{eq:Wmz}.    
\end{proof}

\begin{lem} For $\rho>0$
\begin{align}\label{eq:log-GmV}
	\frac{|G_m(e^{-\rho+it})|}{G_m(e^{-\rho})}
    \le \exp\lpa{|V_m(e^{-\rho+it})|-V_m(e^{-\rho})}
    \qquad(-\pi\le t\le\pi),
\end{align}
where
\begin{align}\label{eq:Vmz}
    V_m(z) := \frac{z(1-z^{m})}{2(1-z)^2(1+z^m)}.
\end{align}
\end{lem}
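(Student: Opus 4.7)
The plan is to compare the two sides of \eqref{eq:log-GmV} via power-series expansions around $z=0$, reducing the inequality to a simple coefficient comparison. First, combining $G_m = P \cdot Q_m$ with the product form \eqref{eq:q}, and setting $w_m(0):= 0$, I would rewrite
\[
    G_m(z) = \prod_{k \ge 1} (1-z^k)^{-c_k},
    \qquad c_k := 1 + w_m(k \bmod 2m).
\]
Expanding each factor via $-\log(1-w)=\sum_{\ell\ge1}w^\ell/\ell$ and regrouping by $n=k\ell$ gives
\[
    \log G_m(z) = \sum_{n \ge 1} A_n z^n,
    \qquad A_n := \frac1n \sum_{d \mid n} d\, c_d.
\]
Setting $z = e^{-\rho+it}$ and taking real parts yields
\[
    \log \frac{|G_m(e^{-\rho+it})|}{G_m(e^{-\rho})}
    = -\sum_{n \ge 1} A_n e^{-n\rho}(1-\cos(nt)).
\]

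Next I would expand $V_m(z)$ in power series. Using the identity $(1-z^m)/(1+z^m) = (1-z^m)^2/(1-z^{2m})$ together with the elementary computation
\[
    \frac{z(1-z^m)^2}{(1-z)^2}
    = z\lpa{1+z+\dots+z^{m-1}}^2
    = \sum_{k=1}^{2m-1} \min(k, 2m-k)\, z^k,
\]
I obtain $V_m(z) = \sum_{n\ge 1} v_n z^n$ with $v_n = \frac12\min(j,2m-j)$ where $j := n \bmod 2m$, and $v_n = 0$ when $2m \mid n$. Since all $v_n \ge 0$, the same manipulation gives
\[
    \Re V_m(e^{-\rho+it}) - V_m(e^{-\rho})
    = -\sum_{n \ge 1} v_n e^{-n\rho}(1-\cos(nt)).
\]

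The heart of the argument is the coefficient-wise inequality $A_n \ge v_n$ for every $n \ge 1$. The $d=n$ term alone gives $A_n \ge c_n$, so it suffices to verify $c_n \ge v_n$. If $2m \mid n$, this is $c_n \ge 1 > 0$. Otherwise, the symmetry $w_m(j) = w_m(2m-j)$ reduces to $1 \le j \le m$, where $c_n = 1 + \tr{(j-1)/2}$ and $v_n = j/2$; the inequality $1 + \tr{(j-1)/2} \ge j/2$ then splits on the parity of $j$ and is immediate, with equality precisely when $j$ is even.

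Finally, combining the two expansions, the inequalities $A_n \ge v_n \ge 0$ and $1 - \cos(nt) \ge 0$ yield
\[
    \log \frac{|G_m(e^{-\rho+it})|}{G_m(e^{-\rho})}
    \le \Re V_m(e^{-\rho+it}) - V_m(e^{-\rho})
    \le |V_m(e^{-\rho+it})| - V_m(e^{-\rho}),
\]
the last step being the trivial $\Re w \le |w|$. Exponentiating gives \eqref{eq:log-GmV}. The main obstacle is pinpointing the right power-series expansion of $V_m$ so that its coefficients align with the arithmetic averages $A_n$ governing $\log|G_m|$, and then verifying the elementary inequality $c_n \ge v_n$; once these are in place, the remainder is a routine rearrangement of absolutely convergent series in $|z|<1$.
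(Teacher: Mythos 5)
Your proof is correct. It rests on the same underlying observation as the paper's — that $\log G_m(z) - V_m(z)$ has nonnegative Taylor coefficients, after which the stated bound is immediate from $\Re w \le |w|$ — but you establish that positivity by a different route. The paper starts from $\log G_m(z) = \sum_{\ell\ge1}U_m(z^\ell)/\ell$ (its preceding lemma), drops the $\ell\ge2$ terms using positivity, and then invokes the closed-form decomposition $U_m(z) = V_m(z) + z^{2m}/(1-z^{2m}) + z/\bigl(2(1-z^2)\bigr)$, in which the two extra summands are manifestly positive. You instead unwind $\log G_m$ all the way to explicit coefficients $A_n = \frac1n\sum_{d\mid n}d\,c_d$, compute the coefficients $v_n$ of $V_m$ via the factorization $(1-z^m)/(1+z^m)=(1-z^m)^2/(1-z^{2m})$, and verify $A_n\ge c_n\ge v_n$ pointwise by splitting on $j=n\bmod 2m$ and the parity of $j$. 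Your route avoids having to discover and verify the algebraic identity for $U_m$, is entirely elementary, and has the side benefit of pinpointing exactly where equality $c_n=v_n$ occurs (even $j$); the paper's route is slicker once the $U_m$ representation is in hand, and reuses that structure it has already set up for other purposes.
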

\begin{proof}
Since each $U_m(z^\ell)$ contains only nonnegative Taylor 
coefficients, we have, by \eqref{eq:log-Gmz},
\begin{align}\label{eq:log-Gm}
	\frac{|G_m(e^{-\rho+it})|}{G_m(e^{-\rho})}
    \le \exp\lpa{-U_m(e^{-\rho})+\Re(U_m(e^{-\rho+it}))}
    \qquad(-\pi\le t\le\pi).
\end{align}
From \eqref{eq:log-Gmz}, we have the decomposition 
\begin{align}\label{eq:UVmz}
	U_m(z) = V_m(z) + \frac{z^{2m}}{1-z^{2m}}
	+\frac{z}{2(1-z^2)},
\end{align}
where each term contains only nonnegative Taylor coefficients; this 
implies that we also have
\[
	\frac{|G_m(e^{-\rho+it})|}{G_m(e^{-\rho})}
    \le \exp\lpa{-V_m(e^{-\rho})+\Re(V_m(e^{-\rho+it}))},
\]
from which \eqref{eq:log-GmV} follows. 
\end{proof}

Another interesting use of \eqref{eq:log-Gmz} is the following 
very effective way of computing $G_{n,m}$, with only weak dependence 
on $m$. 
\begin{coro} For $m\ge1$, $G_{n,m}$ satisfies $G_{0,m}=1$ and for 
$n\ge1$
\[
    G_{n,m} = \frac1n\sum_{1\le k\le n}G_{n-k,m}
    \sum_{d\divides k} [z^d]zU_m'(z),
\]
where
\begin{align}\label{eq:zd-Umz}
    [z^d]zU_m'(z)
    = \begin{dcases}
        \frac{d}{2}+\frac {dm}4\Lpa{1+(-1)^{\tr{d/m}}
        \Lpa{2\Bigl\{ \frac dm\Bigr\}-1}},
        &\text{if }d \text{ is odd};\\
        \frac{dm}4\Lpa{1+(-1)^{\tr{d/m}}
        \Lpa{2\Bigl\{ \frac dm\Bigr\}-1}},
        &\text{if }d \text{ is even}, d\nmid 2m;\\
        d, &\text{if }d\divides 2m.
    \end{dcases}
\end{align}
\end{coro}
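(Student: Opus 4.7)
The plan is to derive the recurrence from the logarithmic derivative of $G_m(z)$ using \eqref{eq:log-Gmz}, and then to read off $[z^d]zU_m'(z)$ from the intermediate form of $U_m(z)$ that appears in the proof of that identity.

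First, differentiating $\log G_m(z) = \sum_{\ell\ge1} U_m(z^\ell)/\ell$ gives
\[
    \frac{z G_m'(z)}{G_m(z)} = \sum_{\ell\ge1} z^\ell U_m'(z^\ell),
\]
from which, by collecting the contributions according to $n=d\ell$,
\[
    [z^n]\Lpa{\frac{zG_m'(z)}{G_m(z)}}
    = \sum_{d\ell=n} d\,[z^d]U_m(z)
    = \sum_{d\divides n}[z^d]zU_m'(z).
\]
Comparing $[z^n]$-coefficients on both sides of the identity $zG_m'(z) = G_m(z)\cdot\lpa{zG_m'(z)/G_m(z)}$, and using $[z^n]zG_m'(z) = nG_{n,m}$ together with the vanishing of $zG_m'(z)/G_m(z)$ at $z=0$, then yields
\[
    nG_{n,m} = \sum_{1\le k\le n} G_{n-k,m}\sum_{d\divides k}[z^d]zU_m'(z),
\]
which is exactly the stated recurrence; the base case $G_{0,m}=1$ is immediate from the definition.

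To prove \eqref{eq:zd-Umz}, I would use the intermediate form of $U_m$ from the proof of \eqref{eq:log-Gmz},
\[
    U_m(z) = \frac{z}{1-z} + \frac{1}{1-z^{2m}} \sum_{1\le j<2m} w_m(j)z^j,
\]
to read off
\[
    [z^d]U_m(z) = 1 + w_m(r),
\]
where $r$ is the residue of $d$ modulo $2m$, with the convention $w_m(0):=0$. Hence $[z^d]zU_m'(z) = d\lpa{1+w_m(r)}$, and it remains to match this against \eqref{eq:zd-Umz}. Writing $d = qm+s$ with $q=\tr{d/m}$ and $s=m\fr{d/m}\in\{0,1,\ldots,m-1\}$, we see that $r=s$ when $q$ is even and $r=m+s$ when $q$ is odd, so from \eqref{eq:wmj},
\[
    w_m(r) = \begin{cases}\tr{(s-1)/2},&q\text{ even},\,s\ge1,\\ \tr{(m-1-s)/2},&q\text{ odd},\\ 0,&q\text{ even},\,s=0.\end{cases}
\]
A four-way parity split (on $q$ and $d$) removes the floors; folding the two parities of $q$ through the sign $(-1)^{\tr{d/m}}$ and using $\fr{d/m}=s/m$ to re-express $s$ recovers the odd and even branches of \eqref{eq:zd-Umz}, while the case $s=0$ (in which $d$ is a multiple of $2m$) matches the remaining branch.

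The main obstacle is purely bookkeeping in this last step: the four subcases must fit together into the single compact expression involving $(-1)^{\tr{d/m}}\lpa{2\fr{d/m}-1}$, and the boundary case $s=0$ must be identified correctly against the third line of \eqref{eq:zd-Umz}. Each sub-identity is a short arithmetic calculation, but keeping the floors, fractional parts, and parities aligned so that all three lines of \eqref{eq:zd-Umz} emerge uniformly is the only delicate point; the structural content of the corollary is entirely captured by the logarithmic-derivative identity above.
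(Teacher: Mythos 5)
Your proof of the recurrence is exactly the paper's: differentiate $\log G_m(z) = \sum_{\ell\ge1}U_m(z^\ell)/\ell$, multiply by $z$, and compare coefficients. For the coefficient formula \eqref{eq:zd-Umz} you take a slightly different intermediate route. The paper starts from the additive decomposition \eqref{eq:UVmz}, $U_m = V_m + \frac{z^{2m}}{1-z^{2m}} + \frac{z}{2(1-z^2)}$, and obtains the closed form $[z^d]V_m(z) = \frac{m}{4}\bigl(1+(-1)^{\tr{d/m}}(2\{d/m\}-1)\bigr)$ directly from the algebraic identity $(1-x)/(1+x)=1-2x/(1+x)$ applied to $V_m(z) = \frac{z}{2(1-z)^2}\cdot\frac{1-z^m}{1+z^m}$; \eqref{eq:zd-Umz} then follows immediately. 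You instead use the raw form $U_m(z) = \frac{z}{1-z} + \frac{1}{1-z^{2m}}\sum_{1\le j<2m}w_m(j)z^j$ from the proof of \eqref{eq:log-Gmz}, read off $[z^d]U_m = 1 + w_m(d\bmod 2m)$, and then unwind the floor-function definition of $w_m$ by cases to match the compact expression. Both routes are valid and of comparable length; yours is more direct but trades the paper's one-line generating-function manipulation for a parity case analysis. One small imprecision in your plan: the floors in $\tr{(s-1)/2}$ and $\tr{(m-1-s)/2}$ are removed by the parity of $s = d\bmod m$, not of $d$ itself, and $s$'s parity depends jointly on the parities of $d$, $q=\tr{d/m}$, and $m$ (via $s\equiv d-qm\pmod 2$); the split is therefore naturally on the pair $(q\bmod 2, s\bmod 2)$ rather than $(q\bmod 2,d\bmod 2)$. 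Also be careful that the third branch of \eqref{eq:zd-Umz} (the paper writes ``$d\divides 2m$'' but clearly means $2m\mid d$) is exactly your case $q$ even, $s=0$; the case $q$ odd, $s=0$ (i.e.\ $d\equiv m\pmod{2m}$) still belongs to the first or second branch, which your $w_m(m)=\tr{(m-1)/2}$ correctly reproduces. Spot-checks of all these subcases confirm your argument goes through.
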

\begin{proof}
Since $(1-x)/(1+x)=1-2x/(1+x)$, we have, by a direct expansion, 
\begin{align}\label{eq:Vmz-coeff}
    V_m(z) 
    &= \frac m4\sum_{d\ge1}
    \Lpa{1+(-1)^{\tr{d/m}}\Lpa{2\Bigl\{ \frac dm\Bigr\}-1}} z^d.
\end{align}
Now taking derivative with respect to $z$ and then multiplying by $z$ 
on both sides of \eqref{eq:log-Gmz} give
\[
    zG_m'(z) = G_m(z) \sum_{\ell\ge1}z^\ell U_m'(z^\ell),
\]
or, taking coefficient of $z^n$ on both sides yields 
\[
    G_{n,m} 
    = \frac1n\sum_{1\le k\le n}G_{n-k,m}
    [z^k]\sum_{\ell\ge1}z^\ell U_m'(z^\ell)
    = \frac1n\sum_{1\le k\le n}G_{n-k,m}
    \sum_{d\divides k} [z^d]zU_m'(z).
\]
By \eqref{eq:UVmz} and \eqref{eq:Vmz-coeff}, we then deduce 
\eqref{eq:zd-Umz}. 
\end{proof}

We now focus on uniform bounds for $|V_m(e^{-\rho-it})|$. 
\begin{prop} For any $3\le m\le n$ and $\rho\to0^+$,
\begin{align}\label{eq:st}
    \frac{|V_m(e^{-\rho-it})|}{V_m(e^{-\rho})}
    \le \begin{dcases}
        1-c\rho^{-2}t^2, & \text{if }|t|\le\rho;\\
        \frac78, & \text{if }\rho\le|t|\le\pi.
    \end{dcases}
\end{align}    
\end{prop}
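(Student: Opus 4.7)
The plan is to derive an explicit formula for $R^2 \defeq |V_m(e^{-\rho-it})|^2/V_m(e^{-\rho})^2$ and treat the two subranges of $t$ separately. Using the identity $\frac{1-z^m}{1+z^m} = \tanh\lpa{-\frac{m}{2}\log z}$, I would first rewrite
\[
V_m(e^{-\rho-it}) = \frac{e^{-\rho-it}}{2(1-e^{-\rho-it})^2}\tanh\Lpa{\frac{m(\rho+it)}{2}}.
\]
Combined with $|1-e^{-\rho-it}|^2 = 4e^{-\rho}\lpa{\sinh^2(\rho/2)+\sin^2(t/2)}$ and the identity $|\tanh(a+ib)|^2 = (\sinh^2 a + \sin^2 b)/(\sinh^2 a + \cos^2 b)$, this yields the exact formula
\[
R^2 = \frac{(v+u)(1+u)}{(1-v+u)u}\cdot\frac{\tilde s^4}{(\tilde s^2+\tilde\sigma^2)^2},
\]
with $u \defeq \sinh^2(m\rho/2)$, $v \defeq \sin^2(mt/2)$, $\tilde s \defeq \sinh(\rho/2)$, and $\tilde\sigma \defeq \sin(t/2)$.

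For $|t|\le\rho$, my approach would be to Taylor expand both factors around $t=0$. The second factor gives $1 - 2(t/\rho)^2 + O((t/\rho)^4)$ uniformly as $\rho\to 0^+$. For the first factor, I would use $\sinh(mx)\ge m\sinh x$ (convexity) and $|\sin(mx)|\le m|\sin x|$ (Chebyshev) to get $V\defeq v/u \le \tilde\sigma^2/\tilde s^2$; expanding in $V$ gives $1 + V(1+2u)/(1+u) + O(V^2) \le 1 + g(m\rho)(t/\rho)^2 + O((t/\rho)^4)$, where $g(y)\defeq y^2\cosh y/\sinh^2 y$ after invoking the identities $1+2\sinh^2 x=\cosh(2x)$ and $2\sinh x\cosh x=\sinh(2x)$. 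Elementary calculus shows $g(y)\le c_0 < 2$ for all $y\ge 0$ (with $g(0)=1$, $g(\infty)=0$, and maximum near $y\approx 2$). Multiplying, $R^2 \le 1 - (2-c_0)(t/\rho)^2 + O((t/\rho)^4)$, giving $R\le 1 - c(t/\rho)^2$ for some absolute $c>0$, uniformly in $m\ge 3$.

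For $\rho\le|t|\le\pi$, I would split by the size of $m\rho$. When $m\rho\ge M_0$ for a sufficiently large fixed $M_0$, $u$ is large, and the crude bound $V\le 1/u$ gives the first factor $\le(1+u)^2/u^2$ close to $1$; combined with the second factor $\le 1/4 + O(\rho^2)$ (using $\tilde\sigma^2/\tilde s^2 \ge 1-O(\rho^2)$ for $|t|\ge\rho$ and $\rho$ small), $R^2\le 49/64$ follows readily. When $m\rho<M_0$, $u$ is bounded, and combining $v\le 1$ with $v\le m^2\tilde\sigma^2$ and $u\ge m^2\tilde s^2$, I obtain $V\le\min(1/u, Q)$ with $Q\defeq\tilde\sigma^2/\tilde s^2$. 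The monotonicity of $R^2$ in $V$ reduces the problem to bounding
\[
\frac{1+u}{(1+Q)(1+u(1-Q))}\ \text{if }Qu\le 1,\qquad \frac{(1+u)^2}{u^2(1+Q)^2}\ \text{if }Qu\ge 1.
\]

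The main obstacle lies in the regime $m\rho\to 0$ with $|t|$ just above $\rho$, where $V$, $Q$, and $1/u$ can all approach each other and the naive pointwise maxima of these reduced expressions approach $1$. The key resolution is that the strict inequality $\sin^2 x<\sinh^2 x$ for $x>0$ (and its finer Chebyshev refinement) forces $V\le Q\cdot(1-O((m\rho)^2))$; inserting this extra margin into the reduced estimates above should yield the uniform bound $R^2\le 49/64$, thereby completing the proof.
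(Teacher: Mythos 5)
Your exact formula for $R^2$ is correct and cleanly derived: with $u=\sinh^2(m\rho/2)$, $v=\sin^2(mt/2)$, $\tilde s=\sinh(\rho/2)$, $\tilde\sigma=\sin(t/2)$, one does have
\[
R^2 = \frac{(u+v)(1+u)}{(1+u-v)\,u}\cdot\frac{\tilde s^4}{(\tilde s^2+\tilde\sigma^2)^2},
\]
and the Chebyshev-type observations $v\le(mt/2)^2$, $\sinh(m\rho/2)\ge m\sinh(\rho/2)$, $|\sin(mt/2)|\le m|\sin(t/2)|$ are all valid. The identity $\frac{\cosh(m\rho)}{\cosh^2(m\rho/2)}\cdot\frac{(mt/2)^2}{\sinh^2(m\rho/2)}=g(m\rho)\,(t/\rho)^2$ with $g(y)=y^2\cosh y/\sinh^2 y$ is also correct, and $\sup_{y\ge0}g(y)\approx 1.17<2$. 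So far so good.

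The difficulty is that your argument for $|t|\le\rho$ never closes. You write the second factor as $1-2(t/\rho)^2+O((t/\rho)^4)$ and the first as $1+g(m\rho)(t/\rho)^2+O((t/\rho)^4)$, then multiply. But these are Taylor expansions about $t=0$, not uniform upper bounds over the full range $|t|\le\rho$. At $|t|=\rho$ you have $(t/\rho)^2=1$; the second factor is then exactly $\frac{1}{(1+Q)^2}$ with $Q=\tilde\sigma^2/\tilde s^2$ close to $1$, which equals about $1/4$, whereas $1-2\cdot 1$ is negative -- the truncated series is simply not an upper bound on this range. Symmetrically, the $O(V^2)$ term in the first-factor expansion is not uniformly small: when $m\rho\to 0$ and $|t|/\rho\to1$ you have $V\to1$, so the ``error'' is order one. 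Because both $O$-terms can be of order one at the endpoint $|t|=\rho$, the product $1-(2-c_0)(t/\rho)^2+O((t/\rho)^4)$ does not imply $R\le1-c(t/\rho)^2$ uniformly. The paper avoids exactly this issue by keeping the denominator factor in closed form: the identity $\frac{(1-e^{-\rho})^2}{|1-e^{-\rho-it}|^2}=\frac{1}{1+\rho^{-2}t^2}\lpa{1+O(t^2)}$ is exact up to a multiplicative $1+O(t^2)$ correction (with $t^2\le\rho^2\to0$), so that the combined bound takes the form $\frac{1+\Upsilon x}{1+x}(1+O(\rho^2))$ with a numerically verified uniform $\Upsilon<0.65$; since $\frac{1+\Upsilon x}{1+x}\le1-\frac{1-\Upsilon}{2}x$ on $[0,1]$, the conclusion follows for all $x\in[0,1]$ without any truncated-expansion step.

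For $\rho\le|t|\le\pi$ you yourself concede the argument is not finished: the reduction to $\frac{1+u}{(1+Q)(1+u(1-Q))}$ and $\frac{(1+u)^2}{u^2(1+Q)^2}$ under $V\le\min(1/u,Q)$ is correct and promising, but when $V$ is set equal to $\min(1/u,Q)$ these pointwise suprema can equal $1$ (e.g.\ at $w=Q$), so one genuinely needs the strict Chebyshev margin $V\le Q\lpa{1-\Theta((m\rho)^2)}$ and must track how much slack that buys as $m\rho$ and $|t|/\rho$ vary; ``inserting this extra margin\dots should yield'' is not a proof. The paper sidesteps this by a monotonicity observation: on $\rho\le|t|\le\pi$ the factor $1/|1-e^{-\rho-it}|^2$ is decreasing in $|t|$, while the $\tanh$-factor has its maxima at the poles $t_j=(2j-1)\pi/m$, so the supremum of $|V_m(e^{-\rho-it})|$ is attained either at $|t|=\rho$ or at $|t|=t_1=\pi/m$; checking only these two points (and the easy large-$m\rho$ case) gives the $7/8$ bound directly. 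If you want to stay with your parametrization, you would need an analogous reduction that pins down where the maximum of your reduced expressions is attained as a function of $Q$, $u$, and the constraint $|t|\ge\rho$, rather than estimating a free-parameter supremum.
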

Before the proof, we observe that $V_m(z)$ admits the partial 
fraction expansion,
\[
    V_m(z) = \frac{m}{4(1-z)}
    +\sum_{1\le j\le m} 
	\frac{e_{m,j}^2}{m(1-e_{m,j})^2
    (e_{m,j}-z)},\with 
    e_{m,j} := e^{(2j+1)\pi i/m},
\]
which shows the subtlety of estimating 
\begin{align}\label{eq:abs-Vm}
    \bigl|V_m(e^{-\rho-it})\bigr|
    &= \frac{e^{-\rho}}{2(1-2e^{-\rho}\cos t+e^{-2\rho})}
    \sqrt{\frac{1-2e^{-m\rho}\cos(mt)+e^{-2m\rho}}
    {1+2e^{-m\rho}\cos(mt)+e^{-2m\rho}}}.
\end{align}

\begin{proof}
Our proof of \eqref{eq:st} is long and divided into several parts.

\paragraph{Growth order of $V_m(e^{-\rho})$.}
By the definition \eqref{eq:Vmz} of $V_m(z)$, we easily obtain the 
estimates
\[
	V_m(e^{-\rho})
	\sim \begin{dcases}
		\frac{m}{4\rho}, &\text{if }m\rho\to0;\\
		\frac{1-e^{-m\rho}}{2\rho^2(1+e^{-m\rho})},
		&\text{if }m\rho \asymp 1;\\
		\frac1{2\rho^2}, 
		&\text{if }m\rho\to\infty. 
	\end{dcases}
\]
In all cases, we have $V_m(e^{-\rho})\asymp n\rho$. 

\paragraph{Uniform bounds for  $|z/(1-z)^2|$.}
We consider first the modulus of $|z/(1-z)^2|$, which is independent 
of $m$ and simpler. Observe that  
\[
    \frac{(1-e^{-\rho})^2}{|1-e^{-\rho-it}|^2}
	= \frac{(1-e^{-\rho})^2}{1-2e^{-\rho}\cos t+e^{-2\rho}}
    = \frac{(1-e^{-\rho})^2}{(1-e^{-\rho})^2 
	+ 2e^{-\rho}(1-\cos t)},
\]
for $-\pi\le t\le \pi$. Now if $|t|=O(\rho)$, then we have the 
uniform expansion
\begin{align}\label{eq:ratio-dn}
	\frac{(1-e^{-\rho})^2}{|1-e^{-\rho-it}|^2}
	= \frac1{1+\rho^{-2}t^2}\Lpa{1+\frac{t^2}{12}
    +\frac{t^2(t^2-\rho^2)}{240}+O(t^6+\rho^4t^2)},
\end{align}
while if $\rho\le |t|\le\pi$, then, by monotonicity,  
\begin{align}\label{eq:ratio-dn2}
	\max_{\rho\le |t|\le\pi}
	\frac{(1-e^{-\rho})^2}{|1-e^{-\rho-it}|^2}
	\le \frac{(1-e^{-\rho})^2}{1-2e^{-\rho}\cos \rho+e^{-2\rho}}
	\sim \frac12.
\end{align}

\paragraph{A uniform bound when $|t|\le\rho$.}

The other factor in \eqref{eq:abs-Vm} is more complicated. For 
convenience, write
\[
    \upsilon(w) := \frac{1-e^{-w}}{2(1+e^{-w})}.
\]
Consider first the range $|t|\le\rho$, beginning with the expression 
\begin{align*}
    \frac{|\upsilon(m(\rho+it))|}{\upsilon(m\rho)}
	&= \sqrt{\frac{1+\frac{2e^{-m\rho}}{(1-e^{-m\rho})^2}
    (1-\cos(mt))}
    {1-\frac{2e^{-m\rho}}{(1+e^{-m\rho})^2}(1-\cos(mt))}}.
\end{align*}
When $|t|\le\rho$, we have the inequality
\begin{equation}\label{eq:ei0}
\begin{split}
    \frac{2e^{-m\rho}}{(1+e^{-m\rho})^2}(1-\cos(mt))
    &\le \begin{dcases}
        \frac{2e^{-m\rho}}{(1+e^{-m\rho})^2}(1-\cos(m\rho)),
        &\text{if }m\rho\le\pi\\
        \frac{4e^{-m\rho}}{(1+e^{-m\rho})^2},
        &\text{if }m\rho>\pi
    \end{dcases} \\
    &< 0.3.
\end{split}   
\end{equation}
Then, by the inequalities 
\[
    \begin{cases}
        (1+x)^{1/2}\le 1+x/2, & \text{for }x\ge 0;\\
        (1-x)^{-1/2}\le 1+2x/3, & \text{for }0\le x\le 0.3,
    \end{cases}
\]
we obtain 
\[
	\frac{|\upsilon(m(\rho+it))|}{\upsilon(m\rho)}
    \le 1+e^{-m\rho}(1-\cos(mt))
	\Lpa{\frac{4}{3(1+e^{-m\rho})^2}
	+\frac1{(1-e^{-m\rho})^2}}
	+ \frac{4e^{-2m\rho} (1-\cos(mt))^2}{3(1-e^{-2m\rho})^2},
\]
and then, by \eqref{eq:ratio-dn}, 
\[
	\frac{|V_m(e^{-\rho-it})|}{V_m(e^{-\rho})}
	\le \frac{1+\Upsilon \rho^{-2}t^2}{1+\rho^{-2}t^2}
	(1+O(t^2)),
\]
where $\Upsilon=\Upsilon(\rho,t)$ is defined as
\begin{align*}
	\Upsilon(\rho,t)
	&:= \rho^2 t^{-2}e^{-m\rho}(1-\cos(mt)) 
	\Lpa{\frac{4}{3(1+e^{-m\rho})^2}
	+\frac1{(1-e^{-m\rho})^2}}\\
	&= \frac{1-\cos(mt)}{(mt)^2/2}
	\cdot e^{-m\rho} \Lpa{\frac{2(m\rho)^2}
	{3(1+e^{-m\rho})^2}
	+\frac{(m\rho)^2}{2(1-e^{-m\rho})^2}}.
\end{align*}
Since $(1-\cos t)/(t^2/2)\le 1$ for all $t\in\mathbb{R}$ and 
\[
	\max_{x\ge0}e^{-x} \Lpa{\frac{2x^2}
	{3(1+e^{-x})^2}+\frac{x^2}{2(1-e^{-x})^2}}
	< 0.65,
\]
we have 
\begin{align}\label{eq:Vm-t-s}
	\frac{|V_m(e^{-\rho-it})|}{V_m(e^{-\rho})}
    \le \frac{1+0.65\rho^{-2}t^2}{1+\rho^{-2}t^2}(1+O(t^2))
	\le 1-c\rho^{-2}t^2,
\end{align}
for $|t|\le \rho$, where $0<c<0.35$. 

\paragraph{A uniform bound when $\rho\le |t|\le\pi$ and $m\rho>\pi$.} 
In this case, we follow the same procedure as above, noting that 
\[
    \frac{2e^{-m\rho}}{(1+e^{-m\rho})^2}(1-\cos(mt))
    \le \frac{4e^{-m\rho}}{(1+e^{-m\rho})^2}<0.19
    < 0.3,
\]
when $m\rho>\pi$ and $|t|\le\pi$. Then 
\[
	\frac{|\upsilon(m(\rho+it))|}{\upsilon(m\rho)}
    \le 1+2e^{-m\rho}\Lpa{\frac{4}{3(1+e^{-m\rho})^2}
	+\frac1{(1-e^{-m\rho})^2}} + \frac{4e^{-2m\rho} (1-\cos(mt))^2}{3(1-e^{-2m\rho})^2}
    <1.25.
\]
This, together with \eqref{eq:ratio-dn2}, gives
\begin{align}\label{eq:Vm-t-b0}
	\frac{|V_m(e^{-\rho-it})|}{V_m(e^{-\rho})}
    < \frac{1.25}{2}=\frac58, 
\end{align}
when $m\rho>\pi$ and $\rho\le |t|\le\pi$. 

\paragraph{A uniform bound when $\rho\le |t|\le\pi$ and
$m\rho\le\pi$.} In this case, $1/(1-z)^2$ has a double pole at $z=1$, 
while $(1-z^m)/(1+z^m)$ has simple poles at $z=e^{t_j i}$ for 
$-\tr{m/2}\le j\le \cl{m/2}$, where $t_j:= (2j-1)\pi/m$. Since 
$1/|1-e^{-\rho-it}|^2$ is monotonically decreasing in $|t|$ when 
$|t|\le\pi$ and $|\upsilon(m(\rho+it))|$ reaches the same maximum at  
$t=t_j$ for all $j$, we then deduce that
\[
	\max_{\rho\le|t|\le\pi}
	|V_m(e^{-\rho-it})|
	\le \max\{|V_m(e^{-\rho-i\rho})|,|V_m(e^{-\rho-it_1})|\},
\]
where $t_1=\pi/m\ge\rho$ when $m\rho\le\pi$. By \eqref{eq:Vm-t-s}, we 
have
\[
	\frac{|V_m(e^{-\rho-i\rho})|}{V_m(e^{-\rho})}
    \le \frac{1.65}{2}(1+O(\rho^2))
	< \frac78.
\]
On the other hand, when $t=t_1$,
\begin{align*}
    \frac{|\upsilon(m(\rho+it_1))|}{\upsilon(m\rho)}
    =\frac{(1+e^{-m\rho})^2}{(1-e^{-m\rho})^2}.
\end{align*}
It follows, by \eqref{eq:ratio-dn}, that 
\begin{align*}
    \frac{|V_m(e^{-\rho-it_1})|}{V_m(e^{-\rho})}
    = \frac{(1+e^{-m\rho})^2}{(1+\pi^2(m\rho)^{-2})
    (1-e^{-m\rho})^2}(1+O(t_1^2))<\frac78,
\end{align*}
when $m\rho\le\pi$, since the value of the monotonic function 
\[
    x\mapsto \frac{(1+e^{-x})^2}{(1+\pi^2x^{-2})
    (1-e^{-x})^2},
\]
lies between $4/\pi^2$ and $0.6$ when $x\in[0,\pi]$. 
Summarizing, we proved that, for $\rho\le|t|\le\pi$, 
\begin{align}\label{eq:Vm-t-b}
	\frac{|V_m(e^{-\rho-it_1})|}{V_m(e^{-\rho})}
	\le \frac78,
\end{align}
whether $m\rho\le\pi$ or $m\rho>\pi$. 

By collecting the estimates \eqref{eq:Vm-t-s}, \eqref{eq:Vm-t-b0},
and \eqref{eq:Vm-t-b}, we obtain \eqref{eq:st} and complete the proof
of the uniform bounds. 
\end{proof}

\begin{proof}(Proposition~\ref{prop:O}: smallness of the 
integral over $\delta\rho\le|t|\le\pi$)
By \eqref{eq:log-GmV}, we obtain
\begin{align*}
	&\int_{\delta\rho\le |t|\le\pi}
	e^{n(\rho+it)} G_m(e^{-\rho-it}) \dd t\\
    &\qquad = O\llpa{e^{n\rho}G_m(e^{-\rho})
    \biggl(\int_{\delta\rho}^\rho+
	\int_\rho^\pi\biggr)
    \exp\lpa{-V_m(e^{-\rho})+\bigl|
    V_m(e^{-\rho-it})\bigr|}\dd t}\\
	&\qquad =: O\lpa{e^{n\rho}G_m(e^{-\rho})
	(J_1+J_2)}.
\end{align*}
By \eqref{eq:Vm-t-s}, for some constants $c, c' > 0$, we have
\begin{align*}
	J_1 = O\llpa{\int_{\delta\rho}^{\rho}
	e^{-cV_m(e^{-\rho})t^2/\rho^2}\dd t}
	= O\lpa{\rho e^{-cV_m(e^{-\rho})\delta^2}}
	= O\lpa{\rho e^{-c'(n\rho)^{1/5}}}.
\end{align*}
On the other hand, by \eqref{eq:Vm-t-b}, $J_2$ is bounded above by 
\begin{align}\label{eq:J2}
	J_2 = O\lpa{e^{-cV_m(e^{-\rho})}}
    = O\lpa{e^{-c'n\rho}}.
\end{align}
This completes the proof of Proposition~\ref{prop:O}.     
\end{proof}

\subsection{Asymptotic nature of the expansion \eqref{eq:Lambda}: proof of Proposition~\ref{prop:I}}
\label{sec:local}

We now prove Proposition~\ref{prop:I} from which the asymptotic
approximation \eqref{eq:Gnm-ge} will then follow.

We begin with the following uniform estimates for $\log
G_m(e^{-\tau})$.
\begin{lem} Let $\tau=\rho+it$. Then, uniformly for $\rho\to0$ and 
	$|t|=O(\rho)$ in the half-plane $\rho>0$,
\begin{align}\label{eq:log-Gm-ue}
	\log G_m(e^{-\tau})
	= \begin{dcases}
		O(m/|\tau|), & \text{if }m\rho\le 1,\\
		O(|\tau|^{-2}), & \text{if }m\rho\ge 1.
	\end{dcases}
\end{align}	
\end{lem}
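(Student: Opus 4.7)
The plan is to substitute the exact decomposition \eqref{eq:rm-expr} and bound each of the six summands using the asymptotic estimates collected in Section~\ref{sec:asym-constant}. Since $|t|=O(\rho)$, we have $|\tau|\asymp\rho$, so the real-variable estimates of Propositions~\ref{prop:Cm}, the corollary giving \eqref{eq:kappa-asymp}, and the corollary giving \eqref{eq:lambda-asymp1}--\eqref{eq:lambda-asymp2} transfer directly to $\tau$ in the relevant half-plane sector. Note also that $\varpi_m=\Theta(m)$ and $\phi_m=\Theta(m^3)$ from \eqref{eq:f-details}, while $\log g_m=\Theta(m^2)$ from Proposition~\ref{prop:Cm}.

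In the subcritical regime $m\rho\le1$, I would establish each term of \eqref{eq:rm-expr} is $O(m/|\tau|)$. The dominant piece $\varpi_m/\tau$ is already of this order. Using $m|\tau|=O(1)$, one has $\log g_m=O(m^2)=O(m/|\tau|)$ and $\phi_m\tau=O(m^3|\tau|)=O(m^2)=O(m/|\tau|)$; the term $\tfrac12\log\tau$ is negligibly small. For the modular contributions, the first branch of \eqref{eq:kappa-asymp} gives $\kappa_m\lpa{e^{-4\pi^2/\tau}}=O\lpa{e^{-\Re(4\pi^2/(m\tau))}}$, and \eqref{eq:lambda-asymp1} combined with the series definition \eqref{eq:xi-d-z} yields $\lambda_m\lpa{e^{-4\pi^2/\tau}}=O\lpa{m^2e^{-\Re(2\pi^2/(m\tau))}}$, both exponentially small and absorbed into the $O(m/|\tau|)$ bound.

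In the regime $m\rho\ge1$, the individual summands are much larger than the claimed $O(|\tau|^{-2})$, so the argument must exhibit the cancellations between them. The explicit leading contributions from $\varpi_m/\tau$ (producing $\pi^2 m/(24\tau)$), from $\phi_m\tau$ (producing $m^3\tau/96$), from the Proposition~\ref{prop:Cm} expansion of $\log g_m$ (producing $-\tfrac{7\zeta(3)}{8\pi^2}m^2$), and from the second branch of \eqref{eq:kappa-asymp} (producing $m\tau/48+\tfrac14\log\lpa{2\pi/(m\tau)}-\pi^2/(12m\tau)$) are precisely matched by the corresponding explicit terms that appear with opposite sign in the second line of \eqref{eq:lambda-asymp2}. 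After these cancellations, what remains of $\log G_m(e^{-\tau})$ is $(\zeta(3)-2\eta_2(m\tau))/(2\tau^2)$ plus subdominant contributions of orders $O(m|\tau|)$, $O(\log(1/|\tau|))$, $O(\eta_1(m\tau))$, $O(|\tau|^2)$, and exponentially small modular remainders, all of which are $O(|\tau|^{-2})$ given $m\rho\ge1$ and $|\tau|\asymp\rho\to0$. Since $\eta_2(m\tau)$ is bounded (indeed exponentially small for $m|\tau|$ large), the principal $\zeta(3)/(2\tau^2)$ term dictates the final bound.

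The main obstacle is the bookkeeping in the second case: one must line up all the polynomial-in-$m$ and $\log$-type contributions across five separate expansions simultaneously, because any of them read in isolation would overshoot the target bound $O(|\tau|^{-2})$ by a factor that diverges with $m$. Once these cancellations are recorded, the lemma follows routinely.
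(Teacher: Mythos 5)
Your proposal follows the same route as the paper: substitute the exact decomposition \eqref{eq:rm-expr}, bound each summand in the regime $m\rho\le 1$, and exhibit the cancellations in the regime $m\rho\ge 1$ to arrive at the explicit expansion (the paper's \eqref{eq:log-G-large-rho}). The subcritical case is handled exactly as in the paper, and your identification of the cancellations as the crux of the supercritical case is correct. Two small imprecisions are worth flagging. First, the cancellation pattern is not a clean one-to-one matching against terms in \eqref{eq:lambda-asymp2}: the term $-\pi^2/(12m\tau)$ from $\kappa_m$ is killed by the $\frac{2}{m}$-part of $\varpi_m/\tau$, the $m\tau$-order terms disappear via a three-way cancellation $\frac{m\tau}{48}-\frac{7m\tau}{96}+\frac{5m\tau}{96}=0$ across $\kappa_m$, $\phi_m\tau$ and $\lambda_m$, and the $\log$-type terms recombine across $\log g_m$, $\frac12\log\tau$, $\kappa_m$ and $\lambda_m$. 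Second, you list $O(m|\tau|)$ among the surviving subdominant contributions and claim it is $O(|\tau|^{-2})$; under the lemma's hypotheses alone ($\rho\to 0$, $m\rho\ge 1$, no a priori upper bound on $m$) this does not follow, since $m|\tau|^3$ need not be bounded. Fortunately the $m\tau$-order terms cancel \emph{exactly}, so no such contribution survives, which is precisely why the stated bound is uniform in $m$. The remaining terms after cancellation — $\pi^2/(24\tau)$, $\frac{1}{24}\log\tau$, constants, $\tau/48$, $-\frac{5}{12}\eta_1(m\tau)$, $\frac12 p(e^{-m\tau})$, and $O(|\tau|^2+m^{-2})$ — are all $O(|\tau|^{-2})$ without further assumptions, and the dominant $(\zeta(3)-2\eta_2(m\tau))/(2\tau^2)$ gives the claimed order.
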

\begin{proof}
If $m\rho\le 1$, then, by \eqref{eq:kappa-asymp} and 
\eqref{eq:lambda-asymp1}, we obtain
\[
	\kappa_m\lpa{e^{-4\pi^2/\tau}} 
    +\lambda_m\lpa{e^{-4\pi^2/\tau}} 
	= O\lpa{m^2 e^{-\Re(2\pi^2/(m\tau))}}
	= O\lpa{m^2 e^{-c/(m\rho)}},
\]
which is obviously $O(m/|\tau|)$. Now, by \eqref{eq:rm-expr} and the 
asymptotic expansion \eqref{eq:log-gm}, we have
\[
    \log G_m(e^{-\tau}) 
    = O(m/|\tau| + m^2 + m^3|\tau|)
    = O(m/|\tau|),
\]
since $m|\tau|=O(1)$. 
      
On the other hand, if $m\rho\ge1$, then, by \eqref{eq:rm-expr} using 
the expressions in \eqref{eq:f-details}, \eqref{eq:log-gm}, 
\eqref{eq:kappa-asymp} and \eqref{eq:lambda-asymp2}, we deduce that  
\begin{equation}\label{eq:log-G-large-rho}
\begin{split}    
    \log G_m(e^{-\tau})
    &= \frac{\zeta(3)-2\eta_2(m\tau)}{2\tau^2}+\frac{\pi^2}{24\tau}
    +\frac{\log \tau}{24}+\frac{\zeta'(-1)}2-\frac{\log 2}4
    +\frac{\tau}{48}\\
    &\qquad -\frac{5\eta_1(m\tau)}{12} 
    + \frac12p(e^{-m\tau}) + O\lpa{|\tau|^2+m^{-2}},
\end{split}    
\end{equation}
where many terms in $\varpi_m/\tau+\log g_m+\phi_m\tau$ are 
cancelled with the corresponding ones in \eqref{eq:lambda-asymp2}. 
Thus, by \eqref{eq:eta-def}, we have $\log G_m(e^{-\tau}) = 
O(|\tau|^{-2})$.
\end{proof}

\begin{lem} \label{eq:Lk}
For $k\ge0$, we have, uniformly for $|t|=O(\rho)$,
\[
	|\Lambda^{(k)}(e^{-\rho-it})| 
    = O\lpa{\rho^{-k}\Lambda(e^{-\rho})}.
\]	
\end{lem}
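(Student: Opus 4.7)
The plan is to bound $|\Lambda^{(k)}(z_0)|$ at $z_0 = e^{-\rho-it}$ via Cauchy's integral formula, using the preceding lemma as the input pointwise bound on a small circle around $z_0$. The factor $\rho^{-k}$ will come out naturally from Cauchy's formula as soon as a circle of radius proportional to $\rho$ is used, and the factor $\Lambda(e^{-\rho})$ will come from the maximum of $|\Lambda(z)|$ on that circle.

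More precisely, I would fix a constant $c\in(0,1/2)$ and apply the estimate
\[
    |\Lambda^{(k)}(z_0)| \le \frac{k!}{(c\rho)^k}\,
    \max_{|z-z_0|=c\rho}|\Lambda(z)|,
\]
which is legitimate since $\Lambda(z)=\log G_m(z)$ is analytic in $|z|<1$ (as $G_m$ has no zeros there) and the closed disc of radius $c\rho$ around $z_0$ stays inside $|z|<1$ for small $\rho$, because $1-|z_0|=1-e^{-\rho}\asymp\rho$. It remains to bound $|\Lambda(z)|$ uniformly on that circle. Parameterising $z = z_0+c\rho e^{i\theta}$ and writing $z=e^{-\tau'}$ with $\tau'=\rho'+it'$, a short computation gives $\rho'=\rho(1+O(c))\asymp\rho$ and $|t'|\le|t|+O(c\rho)=O(\rho')$, so $\tau'$ falls into the regime covered by \eqref{eq:log-Gm-ue}. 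That lemma then yields $|\Lambda(z)|=O(m/\rho)$ when $m\rho\le1$ and $|\Lambda(z)|=O(\rho^{-2})$ when $m\rho\ge1$. In each regime these match $\Lambda(e^{-\rho})=\log G_m(e^{-\rho})$ in order of magnitude, since by \eqref{eq:rm-expr} the dominant contribution to $\log G_m(e^{-\rho})$ is $\varpi_m/\rho\asymp m/\rho$ when $m\rho\le1$ and $\zeta(3)/(2\rho^2)$ when $m\rho\ge1$. Plugging these uniform bounds into Cauchy's estimate gives exactly $|\Lambda^{(k)}(z_0)|=O(\rho^{-k}\Lambda(e^{-\rho}))$.

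The main technical point I expect to watch carefully is the matching in the last step: verifying that the upper bounds of \eqref{eq:log-Gm-ue} are indeed of the same order as the real quantity $\Lambda(e^{-\rho})$ uniformly across the transition $m\rho\asymp 1$, and not merely in each of the two extreme regimes separately. Once that is nailed down, there is no subtle analytic ingredient left: the Cauchy estimate and the chain-rule computation relating $z$ on the circle to $\tau'$ are routine, and any fixed $c\in(0,1/2)$ suffices to keep the integration circle inside $|z|<1$ while preserving the hypothesis $|t'|=O(\rho')$ that feeds into the previous lemma.
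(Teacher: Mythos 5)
Your proposal is correct and follows essentially the same route as the paper: apply Cauchy's estimate (the paper calls it Ritt's Lemma) on a circle of radius proportional to $\rho$ about $e^{-\rho-it}$, pull the $\rho^{-k}$ factor out of the kernel, and bound $|\Lambda|$ on the circle via \eqref{eq:log-Gm-ue} after checking that the circle stays inside the cone $|t'|=O(\rho')$. Your explicit remark that the upper bounds $O(m/|\tau|)$ and $O(|\tau|^{-2})$ from \eqref{eq:log-Gm-ue} must be matched against the actual order of $\Lambda(e^{-\rho})$, including across the transition $m\rho\asymp 1$ where $\varpi_m/\rho$ and $\zeta(3)/(2\rho^2)$ are comparable, is a point the paper's proof leaves implicit; it is a genuine and correctly identified step, not a gap.
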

\begin{proof}
We apply a standard argument (or Ritt's Lemma; see 
\cite[\S~4.3]{Olver1974}) for the asymptotics of the  derivatives of 
an analytic function in a compact domain, starting from the integral 
representation 
\[
    \Lambda^{(k)}(e^{-\rho-it})
    = \frac{k!}{2\pi i}\oint_{|w-e^{-\rho-it}|=c\rho e^{-\rho}}
    \frac{\Lambda(w)}{(w-e^{-\rho-it})^{k+1}}\,\dd w,
\]
where $c>0$ is a suitably chosen small number. Then, since 
$\rho\to0$, we see that 
\[
    \Lambda^{(k)}(e^{-\rho})
    = O\Lpa{\rho^{-k} \max_{|\theta|\le\pi}
    |\Lambda(e^{-\rho-it}(1+c\rho e^{i\theta}))|}
    = O\Lpa{\rho^{-k} \max_{|\theta|\le\pi}
    |\Lambda(e^{-\rho-it + c\rho e^{i\theta}})|}.
\]
By choosing $c$ sufficiently small, the circular range specified by
$\rho+it -c\rho e^{i\theta}$ for $|\theta|\le\pi$ is covered in the
cone $|t|=O(\rho)$, and we can then apply the bounds for
$\Lambda$ given in \eqref{eq:log-Gm-ue}.
\end{proof}

\begin{proof}(Proposition~\ref{prop:I})
Lemma~\ref{eq:Lk} implies, by the definition \eqref{eq:Lambda}, that 
\[
    \Lambda_k(\rho) \asymp \Lambda(e^{-\rho})
	= \log G_m(e^{-\rho}),
    \qquad(k=1,2,\dots). 
\]
Thus the Taylor expansion \eqref{eq:Lambda} is also an asymptotic 
expansion when $|t|\to0$. 
\end{proof}

\subsection{The saddle-point approximation.}
\label{sec:spa}

Theorem~\ref{thm:Gnm-spa} is a direct consequence of 
Propositions~\ref{prop:O} and \ref{prop:I}. 
 
\begin{proof}(Theorem~\ref{thm:Gnm-spa})
By \eqref{eq:int-O}, we obtain 
\[
    G_{n,m}  = \frac1{2\pi}\int_{-\delta\rho}^{\delta\rho}
    e^{n(\rho+it)} G_m(e^{-\rho-it}) \dd t
    + O\lpa{e^{n\rho}G_m(e^{-\rho})e^{-c'(n\rho)^{1/5}}}.
\]
Then by the expansion \eqref{eq:Lambda}, Proposition~\ref{prop:I} and 
the estimate in Lemma~\ref{eq:Lk}, we have 
\begin{align*}
    &\frac1{2\pi}\int_{-\delta\rho}^{\delta\rho}
    e^{n(\rho+it)} G_m(e^{-\rho-it}) \dd t\\
    &\qquad = \frac{\rho e^{n\rho}G_{m}(e^{-\rho})}{2\pi}
    \int_{-\delta}^{\delta}
    \exp\Lpa{it(n\rho-\Lambda_1(\rho)) -\frac{\Lambda_2(\rho)}{2}\, t^2 + 
    \frac{\Lambda_3(\rho)}{6}(-it)^3 +O(\Lambda(e^{-\rho})t^4)}\dd t.
\end{align*}
Choose $\rho>0$ to be the solution of the equation \eqref{eq:sp-eq},
which exists by the estimates in \eqref{eq:log-Gm-ue}. Then take
$\delta$ as we described above, namely,
$\Lambda_2(\rho)\delta^2\to\infty$ and $\Lambda_2(\rho)\delta^3\to0$.
The evaluation of the integral is then straightforward, and omitted.
\end{proof}

\begin{rmk}
The same calculations lead indeed to an asymptotic expansion of the 
form 
\[
    G_{n,m} \sim \frac{\rho e^{n\rho}G_m(e^{-\rho})}{\sqrt{2\pi 
    \Lambda_2(\rho)}}
	\llpa{1+\sum_{j\ge1} \gamma_j(\rho)
	\Lambda_2(\rho)^{-j}},
\]
for some (messy) coefficients $\gamma_j(\rho)$ depending on $\rho$. 
In particular (for simplicity, $\Lambda_j=\Lambda_j(\rho)$), 
\[
    \gamma_1(\rho) 
    = \frac{3\Lambda_2\Lambda_4-5\Lambda_3^2}
    {24\Lambda_2^{2}}, 
\]
and 
\[	
	\gamma_2(\rho) = 
	\frac{-24\Lambda_2^3\Lambda_6+168\Lambda_2^2
	\Lambda_3\Lambda_5+105\Lambda_2^2\Lambda_4^2-630
	\Lambda_2\Lambda_3^2\Lambda_4+385\Lambda_3^4}
	{1152\Lambda_2^4}.
\]    
\end{rmk}

\section{Phase transitions}
\label{sec:pt}

Based on the less explicit saddle-point approximation
\eqref{eq:Gnm-ge}, we now derive more precise asymptotic estimates
according to the relative growth rate of $m$ with $n^{1/3}$, which prove Theorem~\ref{thm:log-pt}.

\subsection{Subcritical phase: $m=o(n^{1/3}(\log n)^{-2/3})$}

We consider here $m$ in the range 
\begin{align}\label{eq:m-small}
	3\le m\le m_-,\with 
	m_- := \frac{6\pi^{2/3}n^{1/3}}
	{(\log n-\frac12\log \log n+\log \omega_n)^{2/3}},
\end{align}
for any sequence $\omega_n$ tending to infinity; compare 
\eqref{eq:m-small-0}.
\begin{prop} If $m$ lies in \eqref{eq:m-small}, then 
\begin{align}\label{eq:Gnm-small}
	G_{n,m}
    \sim \frac{g_m \sqrt{\varpi_m}}
    {2\sqrt{\pi}\, n}\,e^{2\sqrt{\varpi_m(n+\phi_m)}}
    \sim\frac{g_m\sqrt{\pi m}}
    {4\sqrt{6}\, n}\,e^{2\sqrt{\varpi_m (n+m^3/96)}},
\end{align}
where $g_m, \varpi_m$ and $\phi_m$ are defined in 
\eqref{eq:f-details}. If $m\to\infty$ and still lies in the interval 
\eqref{eq:m-small}, then
\[
	G_{n,m}
    \sim c_1 n^{-1} m^{23/24}
    e^{-c_2m^2+2\sqrt{\varpi_m (n+m^3/96)}},
    \with (c_1,c_2) 
    := \llpa{\frac{e^{\zeta'(-1)/2}\pi^{1/24}}
    {2^{67/24} \sqrt{3}},-\frac{7\zeta(3)}{8\pi^2}}.
\]
\end{prop}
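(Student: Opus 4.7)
The plan is to apply the uniform saddle-point approximation \eqref{eq:Gnm-ge} of Theorem~\ref{thm:Gnm-spa} together with the exact expansion \eqref{eq:rm-expr} for $\log G_m(e^{-\rho})$, and to exploit that in the subcritical regime both $\rho$ and $m\rho$ tend to $0$, so that the non-elementary pieces $\kappa_m\lpa{e^{-4\pi^2/\rho}}$ and $\lambda_m\lpa{e^{-4\pi^2/\rho}}$ in \eqref{eq:rm-expr} can be discarded within relative error $o(1)$.

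First I would locate the saddle-point. Differentiating \eqref{eq:rm-expr} and setting the result equal to $n$ as in \eqref{eq:sp-eq}, the elementary part of the expansion gives
\[
n+\phi_m=\frac{\varpi_m}{\rho^2}-\frac1{2\rho}+\text{(exponentially small)},
\]
whence $\rho\sim\rho_0:=\sqrt{\varpi_m/(n+\phi_m)}$. Since $\varpi_m\asymp m$ and $\phi_m=O(m^3)=o(n)$ under the hypothesis $m\le m_-$, this yields $\rho_0\asymp\sqrt{m/n}\to 0$ and $m\rho_0\asymp\sqrt{m^3/n}\to 0$, so in particular we land in the regime $m\rho\le 1$ where \eqref{eq:kappa-asymp} and \eqref{eq:lambda-asymp1} apply.

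The main obstacle is to verify that the correction terms in \eqref{eq:rm-expr} are truly negligible with the prescribed ceiling. By \eqref{eq:kappa-asymp} and \eqref{eq:lambda-asymp1}, both $\kappa_m$ and $\lambda_m$ are of order $O\lpa{m^2 e^{-2\pi^2/(m\rho)}}$; substituting $\rho=\rho_0$ together with the bound $m\le m_-$, and using $m_-^3\sim 216\pi^2 n/(\log n-\tfrac12\log\log n+\log\omega_n)^2$, a short computation yields
\[
m^2 e^{-2\pi^2/(m\rho)}=O\Lpa{\frac1{(\log n)\,\omega_n^{2/3}}}=o(1).
\]
This calibration is precisely what fixes the constant $6\pi^{2/3}$ in the definition of $m_-$, and a parallel bound shows that these corrections do not disturb $\Lambda_1(\rho)$ or $\Lambda_2(\rho)$ to leading order.

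Having dropped $\kappa_m+\lambda_m$, I would compute $\Lambda_2(\rho)=\rho^2\partial_\tau^2\log G_m(e^{-\tau})\bigr|_{\tau=\rho}\sim 2\varpi_m/\rho\sim 2\sqrt{\varpi_m(n+\phi_m)}$, so that $\sqrt{2\pi\Lambda_2(\rho)}\sim 2\sqrt{\pi}\,\lpa{\varpi_m(n+\phi_m)}^{1/4}$. At the saddle the exponent simplifies via the identity $(n+\phi_m)\rho_0+\varpi_m/\rho_0=2\sqrt{\varpi_m(n+\phi_m)}$, while the $\tfrac12\log\rho$ in \eqref{eq:rm-expr} contributes the factor $\sqrt{\rho_0}$. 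Substituting everything into \eqref{eq:Gnm-ge} then gives
\[
G_{n,m}\sim\frac{g_m\,\rho_0^{3/2}}{2\sqrt{\pi}\,\lpa{\varpi_m(n+\phi_m)}^{1/4}}\,e^{2\sqrt{\varpi_m(n+\phi_m)}}=\frac{g_m\sqrt{\varpi_m}}{2\sqrt{\pi}\,(n+\phi_m)}\,e^{2\sqrt{\varpi_m(n+\phi_m)}},
\]
which, after using $n+\phi_m\sim n$ outside the exponential, is the first form in \eqref{eq:Gnm-small}; the second form follows by inserting $\varpi_m=(\pi^2/24)(m+1+2/m)\sim\pi^2 m/24$ and $\phi_m=(m^3-7m+2)/96\sim m^3/96$. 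Finally, for $m\to\infty$ within the same range I would substitute Proposition~\ref{prop:Cm}, so that the combined power of $m$ becomes $m^{1/2}\cdot m^{11/24}=m^{23/24}$, and collecting the constant $\sqrt{\pi}/(4\sqrt{6})$ with $e^{\tfrac12\zeta'(-1)-\tfrac{11}{24}\log\pi-\tfrac{7}{24}\log 2}$ reduces to $\pi^{1/24}e^{\zeta'(-1)/2}/(2^{67/24}\sqrt{3})$, which yields the announced pair $(c_1,c_2)$.
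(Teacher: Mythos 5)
Your overall plan and the mechanics of the computation match the paper closely: you use the uniform saddle-point formula~\eqref{eq:Gnm-ge}, drop the $\kappa_m$ and $\lambda_m$ corrections from~\eqref{eq:rm-expr}, locate the saddle near $\rho_0=\sqrt{\varpi_m/(n+\phi_m)}$, and finish by inserting Proposition~\ref{prop:Cm}. The arithmetic producing $m^{23/24}$ and $(c_1,c_2)$ is also correct.

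There is one genuine inaccuracy in the error analysis, and the paper flags it explicitly. The bound you compute,
$m^2 e^{-2\pi^2/(m\rho)}=O\lpa{1/((\log n)\,\omega_n^{2/3})}$,
controls the \emph{value} error $\kappa_m+\lambda_m$ in $\log G_m(e^{-\rho})$, but that is not the binding constraint that forces the tighter range~\eqref{eq:m-small} rather than~\eqref{eq:m-small-0}. What must be $o(1)$ is the error in $(n+\phi_m)\rho$, which comes from the \emph{derivative} of the correction: by~\eqref{eq:sd-pt-eq-sub} the saddle-point equation picks up a term of size $m^3\xi_2'(m\rho)$, and multiplied by $\rho$ this is $\asymp m\rho^{-1}e^{-2\pi^2/(m\rho)}$. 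At the edge of~\eqref{eq:m-small} this quantity is $\Theta(\omega_n^{-2/3})$, a full factor of $\log n$ larger than your bound, and this larger quantity is precisely what fixes the $-\tfrac12\log\log n$ in $m_-$ (the leading constant $6\pi^{2/3}$ is common to both ranges, so it can't discriminate). Your phrase ``a parallel bound shows that these corrections do not disturb $\Lambda_1(\rho)$'' is where the real work lives, and it needs to be carried out: the parallel bound is strictly weaker (larger) than the one you computed and is the reason the proposition cannot be pushed to the wider interval~\eqref{eq:m-small-0}.

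A second, smaller gap: you use $\rho_0$ directly as the saddle point in~\eqref{eq:Gnm-ge}. Theorem~\ref{thm:Gnm-spa} requires $\rho$ to solve the exact equation~\eqref{eq:sp-eq}, so one must show that the exact $\rho$ and $\rho_0$ agree closely enough that the exponent $n\rho+\log G_m(e^{-\rho})$ matches $n\rho_0+\log G_m(e^{-\rho_0})$ to $o(1)$. The paper handles this via the bootstrapped expansion~\eqref{eq:rho-s} together with the stationarity of the exponent at $\rho_0$; you should either cite or reproduce that step rather than silently substitute $\rho_0$.
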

\begin{proof}
When $3\le m\le m_-$, $\log G_m(e^{-\rho})$ satisfies, by 
\eqref{eq:rm-expr} together with the expressions in
\eqref{eq:f-details}, \eqref{eq:kappa-asymp} and
\eqref{eq:lambda-asymp1},
\begin{align}\label{eq:log-Gm-subcritical}
	\log G_m(e^{-\rho})
	= \frac{\varpi_m}{\rho}+\frac12\log\rho +\log g_m
	+ \phi_m\rho + O\lpa{m^2\xi_2(m\rho)},
\end{align}
where $m^2\xi_2(m\rho)\asymp m^2 e^{-2\pi^2/(m\rho)}$, 
and the saddle-point equation has the form (by an argument
similar to the proof of Lemma~\ref{eq:Lk} using \eqref{eq:rm-expr})
\begin{align}\label{eq:sd-pt-eq-sub}
	n+\phi_m
	= \frac{\varpi_m}{\rho^2}-\frac1{2\rho}
	+O\lpa{m^3\xi_2'(m\rho)}.
\end{align}
Asymptotically, we have, by a direct bootstrapping argument,  
\begin{align}\label{eq:rho-s}
	\rho = \sqrt{\frac{\varpi_m}{n+\phi_m}}
	+ O\lpa{n^{-1}+m^{1/2}n^{-1/2} 
    e^{-4\sqrt{6}\pi n^{1/2}/m^{3/2}}}.
\end{align}
Then the upper limit $m_-$ of $m$ in \eqref{eq:m-small} implies that 
the $O$-terms in the above three equations are all of order $o(1)$; 
in particular,
\[
    \left\{
    \begin{split}
        m^3\rho\xi_2'(m\rho)
        &\asymp m\rho^{-1}e^{-2\pi^2/(m\rho)}
        = \Theta(\omega_n^{-2/3}) \to 0,\\
        m^2\xi_2(m\rho)
        &\asymp m^2 e^{-2\pi^2/(m\rho)}
        =o\lpa{m\rho^{-1}e^{-2\pi^2/(m\rho)}}
        =o\lpa{\omega_n^{-2/3}}.
    \end{split}\right.
\]
[This range is slightly smaller than \eqref{eq:m-small-0} because we
need an expansion for $n\rho$ up to $o(1)$ error, or $(n+\phi_m)\rho 
= \varpi_m/\rho-1/2+o(1)$.] Substituting this choice of $\rho$ and 
using \eqref{eq:sd-pt-eq-sub} into \eqref{eq:log-Gm-subcritical}, we 
have
\begin{align*}
	n\rho+\log G_m(e^{-\rho})
	&= \frac{\varpi_m}{\rho}
	+\frac12\log\rho 
	+\log g_m
	+ (n+\phi_m)\rho  +o(1)\\
	&= 2\sqrt{\varpi_m(n+\phi_m)}
	+\frac12\log\rho 
	+\log g_m +o(1).
\end{align*}
On the other hand, we also have 
\[
	\frac{\rho}{\sqrt{2\pi\Lambda_2(\rho)}}
	\sim \frac{\rho^{3/2}}{2\sqrt{\pi \varpi_m}};
\]
thus
\[
	G_{n,m}
	\sim \frac{g_m\rho^2}
	{2\sqrt{\pi \varpi_m}}\,e^{2\sqrt{\varpi_m(n+\phi_m)}},
\]
proving \eqref{eq:Gnm-small} by \eqref{eq:rho-s}. The values of $c_1, c_2$ are computed using \eqref{eq:log-gm}.
\end{proof}

From this estimate, it is straightforward to show that \eqref{eq:hx1}
holds only when $m=o\lpa{n^{1/7}}$:
\begin{align}\label{eq:n-1over7}
    e^{2\sqrt{\varpi_m(n+m^3/96)}}
    = e^{2\sqrt{\varpi_m n} + O(m^{7/2}n^{-1/2})};
\end{align}
and when $n^{1/7}\ll m=o(n^{3/13})$,
\[
    e^{2\sqrt{\varpi_m(n+m^3/96)}}
    = e^{2\sqrt{\varpi_m n} + \sqrt{\varpi_m}\,m^3n^{-1/2}/192
    +O(m^{13/2}n^{-3/2})}.
\]

\paragraph{A connection to the modified Bessel functions.} By the 
same analysis used in the proof of Proposition~\ref{prop:O} (see 
\eqref{eq:J2}), we have  
\[
    G_{n,m} = \frac1{2\pi i}
    \int_{\rho-i\rho}^{\rho+i\rho}
    e^{n\tau}G_m(e^{-\tau}) \dd \tau
    + O\lpa{e^{n\rho} G_m(e^{-\rho}) e^{-c' n\rho}}.
\]
The integral on the right-hand side is indeed well-approximated by
the modified Bessel function when $3\le m\le m_-$ (see 
\eqref{eq:m-small}). By \eqref{eq:q-expr-exact} and
\eqref{eq:lambda-asymp1},
\begin{align*}
    \frac1{2\pi i}\int_{\rho-i\rho}^{\rho+i\rho}
    e^{n\tau}G_m(e^{-\tau}) \dd \tau
    &= \frac{g_m}{2\pi i}\int_{\rho-i\rho}^{\rho+i\rho}
    \sqrt{\tau} e^{(n+\phi_m)\tau + \varpi_m/\tau}
    \lpa{1+O\lpa{m e^{-\Re(2\pi^2/(m\tau))}}}
    \dd \tau \\
    &= \frac{g_m}{2\pi i}\int_{\mathscr{H}}
    \sqrt{\tau} e^{(n+\phi_m)\tau + \varpi_m/\tau}\dd\tau
    +O\lpa{m e^{-\Re(2\pi^2/(m\tau))}+e^{-cn\rho}},
\end{align*}
where $\mathscr{H}$ denotes a Hankel contour, which starts from 
$-\infty$, encircles around the origin counter-clockwise, and then 
returns to $-\infty$ (the exact shape being immaterial). The last 
integral over $\mathscr{H}$ is nothing but the modified Bessel 
function:
\begin{align*}
    G_{n,m}&\sim\frac{g_m}{2\pi i}\int_{\mathscr{H}}
    \sqrt{\tau} e^{(n+\phi_m)\tau + \varpi_m/\tau}\dd\tau\\
     &= g_m\sum_{j\ge0}\frac{\varpi_m^j(n+\phi_m)^{j+3/2}}
     {j!\Gamma(j-1/2)}\\
     &= \frac{g_m(n+\phi_m)^{-3/2}}{4\sqrt{\pi}}
     \Bigl(\lpa{2\sqrt{\varpi_m(n+\phi_m)}-1} 
     e^{2\sqrt{\varpi_m(n+\phi_m)}}
     \\&\phantom{\frac{g_m(n+\phi_m)^{-3/2}}{4\sqrt{\pi}}}\quad
     -\lpa{2\sqrt{\varpi_m(n+\phi_m)}+1} 
     e^{-2\sqrt{\varpi_m(n+\phi_m)}}\Bigr),
\end{align*}
which holds as long as $3\le m\le m_-$. (Numerical fit of the last 
expression is very satisfactory.) 
    
\subsection{Supercritical phase: $m\gg n^{1/3}\log n$}

We now consider $m$ in the following stationary range 
\begin{align}\label{eq:m-big}
    m\ge m_+, \with m_+:= \Lpa{\frac{n}{\zeta(3)}}^{1/3}
	\Lpa{\frac23\log n+\log\log n+\omega_n},
\end{align}
for any sequence $\omega_n$ tending to infinity with $n$.

\begin{prop} If $m_+\le m\le n$, then 
\begin{align}\label{Gnm-large}
    G_{n,m}\sim G_{n,n}\sim c n^{-49/72}
    e^{\beta_1 n^{2/3}+\beta_2 n^{1/3}},
\end{align}
where the constants $(c,\beta_1,\beta_2)$ are defined in 
\eqref{eq:c-b-b}.
\end{prop}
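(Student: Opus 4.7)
The plan is to apply the uniform saddle-point approximation from Theorem~\ref{thm:Gnm-spa} combined with the large-$m\tau$ expansion \eqref{eq:log-G-large-rho} of $\log G_m(e^{-\tau})$, observing that for $m\ge m_+$ every $m$-dependent remainder is asymptotically negligible. Consequently, the resulting asymptotic for $G_{n,m}$ coincides with that of $G_{n,n}$ stated in \eqref{eq:m-infty}.

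First, I would solve the saddle-point equation \eqref{eq:sp-eq}. Substituting \eqref{eq:log-G-large-rho} and bootstrapping yields
\[
    \rho = \Lpa{\frac{\zeta(3)}{n}}^{1/3}
    \Lpa{1+\frac{a_1}{n^{1/3}}+O\lpa{n^{-2/3}}},
    \with a_1 = \frac{\pi^2}{72\zeta(3)^{2/3}}.
\]
This accuracy is enough because $\Lambda_2(\rho)\asymp n^{4/3}$, so an $O(n^{-1})$ error in $\rho$ contributes only $o(1)$ to the exponent. Next I would substitute this expansion into $n\rho+\log G_m(e^{-\rho})$ and Taylor-match to $O(1)$ precision: the second-order shift $a_2$ cancels between the $n\rho$ and $\zeta(3)/(2\rho^2)$ contributions, leaving
\[
    n\rho+\log G_m(e^{-\rho})
    = \beta_1 n^{2/3}+\beta_2 n^{1/3}
    -\frac{\pi^4}{3456\zeta(3)}
    +\frac{\log \zeta(3)}{72}
    -\frac{\log n}{72}
    +\frac{\zeta'(-1)}2
    -\frac{\log 2}{4}
    +o(1),
\]
where the constant $-\pi^4/(3456\zeta(3))$ arises from $\tfrac32 a_1^2\zeta(3)^{1/3}-\tfrac{\pi^2 a_1}{24\zeta(3)^{1/3}}$.

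For the prefactor, \eqref{eq:log-G-large-rho} yields $\Lambda_2(\rho)=\rho^2\partial_\tau^2\log G_m(e^{-\tau})\big|_{\tau=\rho}\sim 3\zeta(3)^{1/3}n^{2/3}$, so $\rho/\sqrt{2\pi\Lambda_2(\rho)}\sim \zeta(3)^{1/6}/(\sqrt{6\pi}\,n^{2/3})$. Merging this with the $n^{-1/72}$ produced by $\log\rho/24$ gives the claimed $n^{-49/72}$ rate; the identity $\sqrt{6\pi}\cdot 2^{1/4}=2^{3/4}\sqrt{3\pi}$ then consolidates the remaining constants into the exact value of $c$ recorded in \eqref{eq:c-b-b}.

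The main obstacle is establishing the uniformity of the error terms over the entire range $m_+\le m\le n$. The threshold $m_+$ is tuned so that $m\rho\ge\tfrac23\log n+\log\log n+\omega_n+o(1)$, forcing $e^{-m\rho}\le e^{-\omega_n}/(n^{2/3}\log n)$. A short case check (noting that $m e^{-m\rho}$ is monotone decreasing in $m$ for $m\ge m_+$ since $m_+\rho\to\infty$) then shows, uniformly in $m\in[m_+,n]$, that $\eta_2(m\rho)/\rho^2$, $\eta_1(m\rho)$, $p(e^{-m\rho})$ and their derivatives weighted by the relevant powers of $m$ and $\rho^{-1}$ are all $o(1)$. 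Consequently these pieces perturb the saddle-point only by $O(n^{-1}e^{-\omega_n})$, contribute $o(1)$ to the exponent, and alter $\Lambda_2(\rho)$ only by a factor $1+o(1)$. With these uniform bounds the formula above holds throughout the supercritical phase and matches \eqref{eq:m-infty} at $m=n$.
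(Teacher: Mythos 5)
Your proposal is correct and follows essentially the same route as the paper: invoke Theorem~\ref{thm:Gnm-spa}, plug in the expansion~\eqref{eq:log-G-large-rho}, solve the saddle-point equation by bootstrapping to $\rho=\zeta(3)^{1/3}n^{-1/3}+\tfrac{\pi^2}{72\zeta(3)^{1/3}}n^{-2/3}+O(n^{-1})$, observe that the $n^{-1}$-order correction to $\rho$ drops out of the exponent at $O(1)$ precision, and verify that the threshold $m_+$ makes all $m$-dependent remainders $O(e^{-\omega_n})$ uniformly. The computations of $\beta_1$, $\beta_2$, the $-\pi^4/(3456\zeta(3))$ constant, and the prefactor (including the exponent $-49/72$) all check out against~\eqref{eq:c-b-b}, and the paper's proof proceeds along the same lines. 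One small slip: you write $\Lambda_2(\rho)\asymp n^{4/3}$ when justifying the truncation of the $\rho$-expansion, but in fact $\Lambda_2(\rho)\sim 3\zeta(3)\rho^{-2}\asymp n^{2/3}$, as you yourself use correctly later for the prefactor; the quantity that is $\asymp n^{4/3}$ is $\Lambda_2(\rho)/\rho^{2}=\partial_\rho^2\log G_m(e^{-\rho})$, the curvature of the exponent, and with that correction the argument that an $O(n^{-1})$ absolute error in $\rho$ perturbs the exponent by only $O(n^{4/3}\cdot n^{-2})=o(1)$ goes through exactly as intended.
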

\begin{proof}
For this range of $m$, we have, by \eqref{eq:log-G-large-rho} and the 
definition of $\eta_d$ in \eqref{eq:eta-def},  
\[
\begin{split}    
    \log G_m(e^{-\rho})
    &= \frac{\zeta(3)}{2\rho^2}+\frac{\pi^2}{24\rho}
    +\frac{\log \rho}{24}+\frac{\zeta'(-1)}2-\frac{\log 2}4
    +\frac{\rho}{48}+O\lpa{\rho^{-2}\eta_2(m\rho)
    +e^{-m\rho}+\rho^2},
\end{split}    
\]
and the saddle-point equation 
\begin{align}\label{eq:m-large}
    n+\frac1{48} = \frac{\zeta(3)}{\rho^3}
    +\frac{\pi^2}{24\rho^2}
    -\frac{1}{24\rho} 
    +O\lpa{\partial_\rho(\eta_2(m\rho)/\rho^2)+me^{-m\rho}+\rho}.
\end{align}
Solving asymptotically the saddle-point equation 
\eqref{eq:m-large} gives, with $N := n+\frac1{48}$,
\[
    \rho = \zeta(3)^{1/3}\,N^{-1/3}
    + \frac{\pi^2}{72\zeta(3)^{1/3}}\,N^{-2/3}
    -\frac1{72}\, N^{-1} 
    + O\lpa{n^{-4/3}+
    mn^{1/3} e^{-\zeta(3)^{1/3}m/n^{1/3}}}.
\]
Then we obtain 
\[
    \left\{
    \begin{split}
        \rho\partial_\rho(\eta_2(m\rho)/\rho^2)
        &\asymp m\rho^{-1} e^{-m\rho}
        = O\lpa{e^{-\omega_n}}\to0,\\
        \rho^{-2}\eta_2(m\rho)
        &\asymp \rho^{-2}e^{-m\rho}
        =o\lpa{m\rho^{-1} e^{-m\rho}}
        =o\lpa{e^{-\omega_n}},\\
        n^{1/3}m e^{-\zeta(3)^{1/3}m/n^{1/3}}
        &=O(e^{-\omega_n}).
    \end{split}
    \right.
\]
Thus we have expansions for $n\rho+\log G_m(e^{-\rho})$ and $\rho$ to 
within an error of order $o(1)$, which, together with the relation 
$\Lambda_2(\rho)\sim 3\zeta(3)\rho^{-2}$, gives the same asymptotic 
approximation as in \eqref{eq:m-infty}.    
\end{proof}

\subsection{Critical phase: $\log m\sim \frac13\log n$}

In this range, we begin with the expansion \eqref{eq:log-G-large-rho}
and the approximate saddle-point equation
\begin{equation}\label{eq:m-large-0}
\begin{split}    
    n
    &= \frac{\zeta(3)-2\eta_2(m\rho)
    +m\rho\eta_2'(m\rho)}{\rho^3}
    +\frac{\pi^2}{24\rho^2}
    -\frac{1}{24\rho}
    -\frac1{48} \\
	&\qquad +\frac{5m\eta_1(m\rho)}{12} 
    + \frac{me^{-m\rho} p'(e^{-m\rho})}{2}
    + O(\rho).
\end{split}    
\end{equation}
We recall that, in this regime, $\alpha = m n^{-1/3}$. Define 
\[
    R(\alpha, r)
	:=r^3-\zeta(3)+2\eta_2(\alpha r)
    -\alpha r\eta_2'(\alpha r),
\]
and 
\[
	\sigma(x) := 3\zeta(3)-6\eta_2(x)
	+4x\eta_2'(x)-x^2\eta_2''(x),
\]
where the $\eta_d(x)$ are defined in \eqref{eq:eta-def}. We begin 
with two simple lemmas establishing the positivity of $\sigma$ and 
the existence of a positive solution $r$ of the equation 
$R(\alpha,r)=0$, respectively. 

\begin{lem} The function $\sigma(x)$ is positive for $x>0$.
\end{lem}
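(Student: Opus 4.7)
The plan is to reduce the positivity of $\sigma(x)$ to a termwise positivity in the defining series of $\eta_2$, and then prove the resulting one-variable inequality by elementary hyperbolic-function manipulations. Set $f(u)\defeq 1/(e^u+1)$, so that the definition \eqref{eq:eta-def} reads $\eta_2(z) = \sum_{\ell\ge1}\ell^{-3}f(\ell z)$. Differentiating term by term and using $\zeta(3)=\sum_\ell\ell^{-3}$ to absorb the constant, one obtains
\[
    \sigma(x) = \sum_{\ell\ge1}\frac{g(\ell x)}{\ell^3},
    \with g(u)\defeq 3-6f(u)+4uf'(u)-u^2 f''(u),
\]
so it suffices to show $g(u)>0$ for all $u>0$.

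Next I would introduce $T\defeq \tanh(u/2) = (e^u-1)/(e^u+1)$, which yields the clean identities $f=(1-T)/2$, $f'=-(1-T^2)/4$, $f''=T(1-T^2)/4$, together with the algebraic identity $T/(1-T^2)=\tfrac12\sinh u$. Substituting and factoring out $1-T^2$ gives
\[
    g(u) = (1-T^2)\Lpa{\tfrac{3}{2}\sinh u - u - \tfrac{u^2T}{4}}.
\]
Since $1-T^2=\mathrm{sech}^2(u/2)>0$, the problem reduces to positivity of the bracketed factor. Using the strict bound $T<1$ for $u>0$, it in turn suffices to prove the simpler inequality $6\sinh u > 4u + u^2$.

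Finally, truncating the positive Taylor series $6\sinh u = 6u+u^3+\sum_{k\ge2}6u^{2k+1}/(2k+1)!$ at the cubic term (the tail is nonnegative for $u\ge0$), it suffices to check $6u+u^3 > 4u+u^2$, equivalently $u(u^2-u+2)>0$; and this is clear since $u^2-u+2=(u-\tfrac12)^2+\tfrac74>0$. Chaining the inequalities gives $g(u)>0$ on $(0,\infty)$, hence $\sigma(x)>0$ for $x>0$.

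There is no substantive obstacle; the only step requiring some ingenuity is the $\tanh(u/2)$-substitution, which collapses the apparently unrelated contributions of $f$, $f'$, $f''$ into a single factor $1-T^2$ multiplying a transparent hyperbolic expression. Without this simplification one is left with a mixture of exponentials and polynomials whose sign is not immediately evident, especially in the moderate range of $u$ where neither the small-$u$ Taylor expansion nor the large-$u$ exponential asymptotics alone is decisive.
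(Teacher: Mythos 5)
Your proof is correct. The termwise decomposition $\sigma(x)=\sum_{\ell\ge1}\ell^{-3}g(\ell x)$ with $g(u)=3-6f(u)+4uf'(u)-u^2f''(u)$ is right, the identities $f=(1-T)/2$, $f'=-(1-T^2)/4$, $f''=T(1-T^2)/4$ and $T/(1-T^2)=\tfrac12\sinh u$ all check out, the factorization $g(u)=(1-T^2)\bigl(\tfrac32\sinh u-u-\tfrac14u^2T\bigr)$ follows by direct substitution (the constant $3$ cancels $3(1-T)$ to leave $3T$, which becomes $\tfrac32(1-T^2)\sinh u$), and the chain $6\sinh u\ge 6u+u^3>4u+u^2$ for $u>0$ is sound since $u^2-u+2$ has negative discriminant. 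This does settle positivity.

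The route differs genuinely from the paper's. The paper observes that $\sigma(x)\to0$ as $x\to0^+$ (from $\sigma(x)\sim\zeta(2)x/2$) and then establishes $\sigma'(x)>0$ by a termwise argument on the \emph{derivative} series, writing $\sigma'(x)=\sum_{j\ge1}e^{-jx}\tilde\sigma(jx)/\lpa{j^2(1+e^{-jx})^4}$ and bounding $\tilde\sigma$ below by a numerical constant; positivity of $\sigma$ then follows from monotonicity plus the boundary value at $0$. You instead prove termwise positivity of $\sigma$ itself, with no recourse to the derivative or to asymptotics at the endpoints. Your version is a bit more direct and avoids the (mild) awkwardness of the paper's lower bound $\tilde\sigma>2.9$, which is obtained by a somewhat opaque intermediate inequality; the price is the $\tanh(u/2)$ substitution, which the paper does not use. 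Both are legitimate; yours is arguably cleaner because it produces a transparent factorization $g=(1-T^2)\cdot(\text{hyperbolic polynomial})$ and reduces to a textbook Taylor-truncation bound, while the paper's delivers the extra fact that $\sigma$ is increasing (which they do not need but which is nice to know).
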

\begin{proof}
Note that $\sigma(x) \sim 3 \zeta(3)$ as $x\to\infty$, and $\sigma(x)
\sim \zeta(2)x/2$ as $x\to0$. So the monotonicity of $\sigma(x)$ for
$x\ge 0$ follows from the identity:
\[
	\sigma'(x) = \sum_{j\ge1}
	\frac{e^{-jx}\tilde{\sigma}(jx)}{j^2(1+e^{-jx})^4},
\]
where $\tilde{\sigma}(x) :=
2(1+e^{-x})^2+2(1-e^{-x})x+(1-4e^{-x}+e^{-2x})x^2 >
2+x^2+4e^{-x}(1-x^2) >2.9$ for $x\ge0$.
\end{proof}

Once $m$ is given, $\alpha=m/n^{1/3}$ is fixed and then $r$ can be 
solved from the equation $R(\alpha,r)=0$, which is nothing but  
\eqref{eq:sdpt}. 
\begin{lem}\label{lem:unique-r} For any $\alpha>0$, the equation
$R(\alpha,r)=0$ has a unique solution $r>0$. Moreover, $r =
r(\alpha)$ is increasing as a function in $\alpha$.
\end{lem}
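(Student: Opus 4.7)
The plan is to reduce everything to the single-variable function $\phi(x) := 2\eta_2(x) - x\eta_2'(x) - \zeta(3)$, so that $R(\alpha,r) = r^3 + \phi(\alpha r)$, and then exploit both the sign of $\phi$ and a clean formula for $\partial_r R$ at a root. Termwise differentiation of the defining series \eqref{eq:eta-def} gives $\eta_2(0) = \tfrac12\zeta(3)$, $\eta_2'(0) = -\pi^2/24$, together with the positivity on $(0,\infty)$ and exponential decay at infinity of each of $\eta_2(x)$, $-\eta_2'(x)$, $\eta_2''(x)$. Consequently $\phi(0) = 0$, $\phi(\infty) = -\zeta(3)$, and
\[
    \phi'(x) = \eta_2'(x) - x\eta_2''(x) < 0 \quad\text{for all } x > 0.
\]

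Existence of a positive root then follows from the intermediate value theorem: for fixed $\alpha > 0$ we have $R(\alpha, r) = -\pi^2 \alpha r/24 + O(r^2) < 0$ for all sufficiently small $r > 0$, while $r^3$ eventually dominates the bounded quantity $\phi(\alpha r)$, giving $R(\alpha, r) \to +\infty$ as $r \to \infty$.

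The main calculation is the identity
\[
    r\,\partial_r R(\alpha, r) = \sigma(\alpha r) \quad\text{whenever } R(\alpha, r) = 0,
\]
which I would establish by differentiating $R$ to obtain $r\,\partial_r R = 3r^3 + \alpha r\,\eta_2'(\alpha r) - (\alpha r)^2 \eta_2''(\alpha r)$ and then using $R(\alpha,r) = 0$ to substitute $3r^3 = 3\zeta(3) - 6\eta_2(\alpha r) + 3\alpha r\,\eta_2'(\alpha r)$; the definition of $\sigma$ appears verbatim. By the previous lemma $\sigma(\alpha r) > 0$, so $R(\alpha, \cdot)$ crosses zero transversally from below at every positive root, which is incompatible with the existence of a second positive root. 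This yields uniqueness.

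For monotonicity, the implicit function theorem gives
\[
    r'(\alpha) = -\frac{\partial_\alpha R}{\partial_r R} = -\frac{r\,\phi'(\alpha r)}{\partial_r R(\alpha, r)} > 0,
\]
since $\phi'(\alpha r) < 0$ by the first paragraph and $\partial_r R > 0$ by the $\sigma$ identity just established. The only delicate step is the identity $r\,\partial_r R = \sigma(\alpha r)$; once it is in place, the positivity of $\sigma$ furnished by the preceding lemma simultaneously closes both the uniqueness and the monotonicity arguments.
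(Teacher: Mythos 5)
Your proof is correct and takes a somewhat different (and arguably tighter) path than the paper's. The paper rewrites the equation as $r^3 = \tilde{R}(\alpha r)$ with $\tilde{R}(x) := \zeta(3)-2\eta_2(x)+x\eta_2'(x)$ (which is exactly $-\phi(x)$ in your notation), exhibits an explicit termwise-positive series for $\tilde{R}'$, observes that $\tilde{R}$ increases from $0$ to $\zeta(3)$ with $\tilde{R}(x)\sim\zeta(2)x/4$ near $0$, and from this asserts existence and uniqueness. Your identity
\[
r\,\partial_r R(\alpha,r)
= 3r^3 + \alpha r\,\eta_2'(\alpha r) - (\alpha r)^2\eta_2''(\alpha r)
= \sigma(\alpha r)
\qquad\text{whenever } R(\alpha,r)=0
\]
is not stated in the paper but is precisely what turns ``$\tilde{R}$ is monotone'' into a rigorous uniqueness argument: it is equivalent to $3\tilde{R}(x)-x\tilde{R}'(x)=\sigma(x)>0$, i.e.\ that $\tilde{R}(\alpha r)/r^3$ is strictly decreasing, which is what forces a single crossing. (Two increasing curves on $(0,\infty)$ need not cross only once, so the paper's ``thus'' is a bit quick; your transversality argument closes this.) You also make the monotonicity of $r(\alpha)$ explicit via the implicit function theorem, whereas the paper leaves it implicit --- there it follows from $\tilde{R}$ increasing and $R(\alpha_2, r(\alpha_1))<0$ for $\alpha_2>\alpha_1$. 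The one spot that merits a line of justification is your claim that $\eta_2''>0$ on $(0,\infty)$: it is true, since each term of $\eta_2$ equals $\ell^{-3}(e^{\ell x}+1)^{-1}$ and
\[
\frac{\mathrm{d}^2}{\mathrm{d}x^2}\,\frac{1}{e^{\ell x}+1}
= \frac{\ell^2 e^{\ell x}\lpa{e^{\ell x}-1}}{\lpa{e^{\ell x}+1}^3} > 0
\qquad (x>0),
\]
but it is not a consequence of the naive convexity-of-composition heuristic (a decreasing convex function of an increasing convex function need not be convex), so it should be verified rather than asserted; the paper sidesteps this by writing $\tilde{R}'$ directly as a positive series.
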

\begin{proof}
Consider the function $\tilde{R}(x) := \zeta(3)-2\eta_2(x)
+x\eta_2'(x)$, which has the explicit series form
\[
	\tilde{R}(x) = \sum_{j\ge1}
	\frac{1-jxe^{-jx}-e^{-2jx}}{j^3(1+e^{-jx})^2}. 
\]
For large $x$, $\tilde{R}(x) \sim \zeta(3)$, while, for small $x$, 
$\tilde{R}(x) \sim \zeta(2)x/4$. Also 
\[
	\tilde{R}'(x) = \sum_{j\ge1}
	\frac{je^{-jx}(1+e^{-jx}+jx(1-e^{-jx}))}
	{j^3(1+e^{-jx})^3}>0,
\]
for $x>0$. Thus for each fixed $\alpha>0$, the equation
$r^3=\tilde{R}(\alpha r)$ has a unique positive solution. 
\end{proof}

We now state the transitional behavior of $G_{n,m}$ for $m\asymp 
n^{1/3}$.
\begin{prop}\label{prop:critical} Let $\alpha = mn^{-1/3}$, where 
$\log m=\frac13(1+o(1))\log n$. Then we have the asymptotic 
approximation
\begin{align}\label{eq:Gnm-critical}
	G_{n,m} = c(\alpha,r) n^{-49/72}
    e^{\beta_1(\alpha,r)n^{2/3}+\beta_2(\alpha,r)n^{1/3}}
    \lpa{1+O\lpa{n^{-1/3}(1+\alpha^{-5/2})}},
\end{align}
uniformly in $m$, where $r$ is the unique positive solution of 
$R(\alpha,r)=0$, $\beta_1(\alpha,r)=G(\alpha)$ in \eqref{eq:logG-ua}: 
\begin{align*}
	\beta_1(\alpha,r) =G(r) = r + 
	\frac{\zeta(3)-2\eta_2(\alpha r)}{2r^2},\quad 
    \beta_2(\alpha,r) := 
    \frac{\pi^2}{24r},
\end{align*}  
and
\[
    c(\alpha,r) 
    :=  \frac{r^{49/24}}{2^{3/4}\sqrt{\pi\sigma(\alpha r)}}
    \exp \left(
        \frac{\zeta'(-1)}2-\frac{5\eta_1(\alpha r)}{12}
        +\frac{p(e^{-\alpha r})}2
        -\frac{\pi^4}{1152\sigma(\alpha r)}
    \right).
\]
\end{prop}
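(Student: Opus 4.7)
The plan is to apply Theorem~\ref{thm:Gnm-spa} together with the expansion \eqref{eq:log-G-large-rho} for $\log G_m(e^{-\tau})$, which is valid throughout the regime that the critical window $\log m\sim\tfrac13\log n$ falls into: the leading behavior will force $\rho\asymp n^{-1/3}$, so $m\rho\asymp\alpha r$ remains bounded and, by Lemma~\ref{lem:unique-r}, bounded away from zero, making all auxiliary terms involving $\eta_d(m\rho)$ and $p(e^{-m\rho})$ contribute to the explicit prefactor rather than to an error.

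First I would determine the saddle point $\rho$ to one order beyond leading. Writing $\rho = rn^{-1/3} + sn^{-2/3} + o(n^{-2/3})$, substituting into \eqref{eq:m-large-0} and multiplying by $\rho^3$, the leading balance recovers exactly \eqref{eq:sdpt}, namely $r^3 = \zeta(3) - 2\eta_2(\alpha r) + \alpha r\eta_2'(\alpha r)$, with unique positive solution $r=r(\alpha)$ by Lemma~\ref{lem:unique-r}. Matching the $O(n^{-1/3})$ terms in the same equation yields
\begin{equation*}
    s\bigl(3r^2 + \alpha\eta_2'(\alpha r) - \alpha^2 r\eta_2''(\alpha r)\bigr) = \frac{\pi^2 r}{24},
\end{equation*}
and after using \eqref{eq:sdpt} to rewrite $3\zeta(3)-6\eta_2(\alpha r)$ as $3r^3-3\alpha r\eta_2'(\alpha r)$, the bracket is seen to equal $\sigma(\alpha r)/r$, giving $s = \pi^2 r^2/(24\,\sigma(\alpha r))$.

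Next I would substitute this two-term expansion of $\rho$ into $n\rho + \log G_m(e^{-\rho})$ and collect powers of $n$. The $n^{2/3}$ coefficient evaluates to $r + (\zeta(3)-2\eta_2(\alpha r))/(2r^2) = G(r) = \beta_1(\alpha,r)$. A consequence of \eqref{eq:sdpt}, namely $(\zeta(3)-2\eta_2(\alpha r))/r^3 = 1 - \alpha\eta_2'(\alpha r)/r^2$, causes the $n^{1/3}$ contributions linear in $s$ to cancel, leaving the clean $\pi^2/(24r) = \beta_2(\alpha,r)$. The $O(1)$ constant is then assembled from: the quadratic-in-$s$ piece of $(\zeta(3)-2\eta_2(m\rho))/(2\rho^2)$, which simplifies via the same identity to $s^2\sigma(\alpha r)/(2r^4) = \pi^4/(1152\,\sigma(\alpha r))$; the linear-in-$s$ piece of $\pi^2/(24\rho)$, equal to $-\pi^4/(576\,\sigma(\alpha r))$; the $(\log r)/24$ extracted from $(\log\rho)/24$; and the remaining $\zeta'(-1)/2 - (\log 2)/4 - 5\eta_1(\alpha r)/12 + p(e^{-\alpha r})/2$ read off directly from \eqref{eq:log-G-large-rho}. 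For the Gaussian width, differentiating \eqref{eq:log-G-large-rho} twice and multiplying by $\rho^2$ yields $\Lambda_2(\rho) \sim n^{2/3}\sigma(\alpha r)/r^2$ after the same simplification. Combining the prefactor $\rho/\sqrt{2\pi\Lambda_2(\rho)} \sim r^2 n^{-2/3}/\sqrt{2\pi\sigma(\alpha r)}$ with the $n^{-1/72}$ arising from $(\log\rho)/24$ produces $n^{-2/3-1/72} = n^{-49/72}$ and assembles into precisely $c(\alpha,r)$ as stated.

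The main technical obstacle is the uniform control of the error term to the form $O(n^{-1/3}(1+\alpha^{-5/2}))$ across the whole critical window. The factor $\alpha^{-5/2}$ signals the interface with the subcritical regime: as $\alpha\to 0$ one has $r \sim \pi\sqrt{\alpha/24}\to 0$, $\sigma(\alpha r) \sim \pi^2\alpha r/12$ becomes small, the shift $s = \pi^2 r^2/(24\,\sigma(\alpha r))$ grows like $\alpha^{-1/2}$, and successive Taylor corrections in the saddle-point expansion acquire increasing powers of $1/\sigma(\alpha r)$. Bounding these contributions uniformly in $\alpha$, while simultaneously propagating the $O(\Lambda_2(\rho)^{-1}) = O(n^{-2/3})$ relative error from Theorem~\ref{thm:Gnm-spa} and the $O(\rho^2 + m^{-2})$ error in \eqref{eq:log-G-large-rho}, is where the bookkeeping burden lies; the analytic simplifications collected above are otherwise routine once one exploits the structural identity \eqref{eq:sdpt}.
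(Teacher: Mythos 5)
Your proposal is correct and follows essentially the same route as the paper: expand $\rho$ in powers of $n^{-1/3}$, solve the saddle-point equation order by order to obtain $r$ from \eqref{eq:sdpt} and then $s=rr_1=\pi^2r^2/(24\sigma(\alpha r))$, substitute back and collect powers of $n^{1/3}$, and use $\Lambda_2(\rho)\sim\sigma(\alpha r)/\rho^2$ for the Gaussian prefactor; all of your explicit simplifications via \eqref{eq:sdpt} (the $\beta_2$ cancellation, the quadratic-in-$s$ piece collapsing to $s^2\sigma(\alpha r)/(2r^4)=\pi^4/(1152\sigma)$, and the assembly of $c(\alpha,r)$) check out. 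The only cosmetic difference is that the paper also records the next coefficient $r_2$ in the $\rho$-expansion, which you rightly bypass by invoking stationarity of $n\rho+\Lambda(e^{-\rho})$ at the saddle point; the paper uses $r_2$ mainly to sharpen the stated error term, which you correctly identify as the residual bookkeeping burden.
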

The error term in \eqref{eq:Gnm-critical} suggests that 
\eqref{eq:Gnm-critical} remains valid as long as $m\gg n^{1/5+\ve}$, 
but outside the range $m=\frac13(1+o(1))\log n$ it is simpler to use 
other simpler approximations such as \eqref{eq:Gnm-small} and 
\eqref{Gnm-large}. 
\begin{proof}
Write first $m=\alpha n^{1/3}$ and 
\begin{align}\label{eq:rho-critical}
    \rho = \frac{r}{n^{1/3}}\Lpa{1+\frac{r_1}{n^{1/3}}
    +\frac{r_2}{n^{2/3}}+\cdots },
\end{align}
where the coefficients $r_j=r_j(\rho,\eta_1,\eta_2)$ can be computed 
as follows. Substitute first this expansion into 
\eqref{eq:m-large-0}, expand in decreasing powers of $n$, equate the 
coefficient of each negative power of $n$ on both sides, and then 
solve for $r_1$, $r_2$, \dots, one after another. In this way, we 
obtain, for example,
\begin{equation*}\label{eq:r1-r2}
    \begin{split}
        r_1 &= \frac{\pi^2 r}{24\sigma(\alpha r)},\\
    	r_2 &= \frac{r^2}{\sigma(\alpha r)}
    	\Bigl(-\frac1{24}+\frac5{12}\,\alpha r\eta_1'(\alpha r)
    	+\frac{\alpha r e^{-\alpha r}p'(e^{-\alpha r})}2 \\
    	&\qquad\qquad 
        +\frac{\pi^4}{1152\,\sigma(\alpha r)^2}
        \lpa{2\alpha r \eta_2'(\alpha r) 
        -2(\alpha r)^2 \eta_2''(\alpha r)
        +(\alpha r)^3 \eta_2'''(\alpha r)}\Bigr).
    \end{split}
\end{equation*}
The determination of further terms $r_j$ with $j\ge4$ requires a
longer expansion in \eqref{eq:m-large-0}. The asymptotic estimate
\eqref{eq:Gnm-critical} then follows from substituting the expansion
\eqref{eq:rho-critical} into the uniform saddle-point approximation
\eqref{eq:Gnm-ge} and expand terms up to an error of
$O\lpa{n^{-2/3}}$, together with the relation 
\[
    \Lambda_2(\rho) = \frac{\sigma(\alpha r)}{\rho^2} 
    +\frac{\pi^2 \sigma'(\alpha r)}{24\rho \sigma(\alpha r)}
    + \cdots. 
\]
The more precise error term in \eqref{eq:Gnm-critical} results from
computing more terms in the expansion and examining the asymptotic
behaviors when $\alpha r$ is large and small; we omit the less
interesting details. 
\end{proof}

In particular, the growth of the number of BPPs when their widths get 
close to the typical length behaves asymptotically like a Gumbel 
distribution. 

\begin{coro} \label{coro:typical-length}
Assume that $m$ satisfies 
\begin{equation}\label{eq:typical-length-para}
    \alpha = \frac{m}{n^{1/3}}
    = \frac1{\zeta(3)^{1/3}} 
    \Lpa{\frac{2}{3} \log\Lpa{\frac{n}{\zeta(3)}} +x}.
\end{equation}
Then 
\begin{align}\label{eq:Gnm-Gnn}
    \frac{G_{n,m}}{G_{n,n}} 
    = \exp\Lpa{-e^{-x}\lpa{1+O\lpa{n^{-1/3}\log n}}},
\end{align}
uniformly for $x=o(\log n)$. 
\end{coro}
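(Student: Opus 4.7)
The plan is to apply the uniform approximation \eqref{eq:Gnm-critical} of Proposition~\ref{prop:critical} at the critical scaling \eqref{eq:typical-length-para} and divide the result by the corresponding approximation for $G_{n,n}$ (equivalently, the limit $\alpha\to\infty$ of \eqref{eq:Gnm-critical}, which coincides with \eqref{Gnm-large}). The bulk of the work then reduces to showing that, of the three pieces $\beta_1(\alpha,r)n^{2/3}$, $\beta_2(\alpha,r)n^{1/3}$ and $\log c(\alpha,r)$ appearing in \eqref{eq:Gnm-critical}, only the first contributes something of magnitude $e^{-x}$ to the ratio, and that this contribution equals $-e^{-x}$ up to a relative $O(n^{-1/3}\log n)$.

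The first technical step is to estimate the saddle-point $r=r(\alpha)$ of \eqref{eq:sdpt} when $\alpha=\Theta(\log n)$. Reading directly from the series \eqref{eq:eta-def}, one has $\eta_2(z)=e^{-z}+O(e^{-2z})$ and $\eta_2'(z)=-e^{-z}+O(e^{-2z})$ as $\Re(z)\to\infty$. A single bootstrap of \eqref{eq:sdpt} then yields
\[
    r = \zeta(3)^{1/3} - \frac{(2+\alpha r)\,e^{-\alpha r}}{3\,\zeta(3)^{2/3}}\lpa{1+O(e^{-\alpha r})},
\]
which, under \eqref{eq:typical-length-para}, translates into $r-\zeta(3)^{1/3}=O(n^{-2/3}(\log n)e^{-x})$, $\alpha r=\tfrac{2}{3}\log(n/\zeta(3))+x+O(n^{-2/3}(\log n)^{2}e^{-x})$, and
\[
    n^{2/3}\,e^{-\alpha r} = \zeta(3)^{2/3}\,e^{-x}\lpa{1+o(1)}.
\]

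The second step is the main cancellation. Eliminating $\zeta(3)-2\eta_2(\alpha r)$ from the definition of $\beta_1$ via \eqref{eq:sdpt} rewrites it as $\beta_1(\alpha,r)=\tfrac{3r}{2}-\alpha\,\eta_2'(\alpha r)/(2r)$. Substituting the bootstrap expansion of $r$ and the asymptotics of $\eta_2'$, the two contributions of size $\alpha\,e^{-\alpha r}$ cancel, and what remains is the $-e^{-\alpha r}/\zeta(3)^{2/3}$ term, giving
\[
    \lpa{\beta_1(\alpha,r)-\tfrac{3}{2}\zeta(3)^{1/3}}\,n^{2/3} = -e^{-x}\lpa{1+O(n^{-1/3}\log n)}.
\]
The third step bounds the remaining contributions: since each of $\sigma(\alpha r)-3\zeta(3)$, $\eta_1(\alpha r)$, $p(e^{-\alpha r})$ and $r^{49/24}-\zeta(3)^{49/72}$ is of order $(\log n)e^{-\alpha r}=O(n^{-2/3}(\log n)e^{-x})$, the constant $c(\alpha,r)$ differs from its $\alpha\to\infty$ limit (the $c$ of \eqref{eq:c-b-b}) by a factor $1+O(n^{-2/3}(\log n)e^{-x})$. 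Similarly, $\beta_2(\alpha,r)-\pi^2/(24\zeta(3)^{1/3})=O(n^{-2/3}(\log n)e^{-x})$ contributes $O(n^{-1/3}(\log n)e^{-x})$ after multiplication by $n^{1/3}$. The $n^{-49/72}$ prefactors match identically, and combining everything with the residual $O(n^{-1/3})$ error in \eqref{eq:Gnm-critical} yields \eqref{eq:Gnm-Gnn}.

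The main obstacle is keeping the bookkeeping tight enough that every polynomial-in-$\log n$ prefactor multiplying $e^{-\alpha r}$ ends up absorbed into the stated relative error $O(n^{-1/3}\log n)$ rather than polluting the $-e^{-x}$ main term; one also needs to use cancellation in the ratio to replace the absolute error $O(n^{-1/3})$ of Proposition~\ref{prop:critical} (applied separately to $G_{n,m}$ and $G_{n,n}$) by the tighter relative error. Once the algebraic cancellation in the second step is in hand, the Gumbel limit $\exp(-e^{-x})$ emerges by routine expansion.
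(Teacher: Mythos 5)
Your proposal follows essentially the same route as the paper: bootstrap the saddle-point equation \eqref{eq:sdpt} for $r(\alpha)$ at large $\alpha$, then compare the critical-phase approximation \eqref{eq:Gnm-critical} evaluated at the scaling \eqref{eq:typical-length-para} against its $\alpha\to\infty$ limit, which is the $G_{n,n}$ asymptotics \eqref{eq:m-infty}. The paper's own proof is much terser (it states the bootstrap expansion of $r$ and concludes immediately), so your explicit isolation of the cancellation in $\beta_1(\alpha,r)=\tfrac{3r}{2}-\alpha\eta_2'(\alpha r)/(2r)$, your bookkeeping of the subdominant $c$ and $\beta_2$ corrections, and your flag that the absolute $O(n^{-1/3})$ error of Proposition~\ref{prop:critical} must cancel in the ratio (rather than being trivially absorbed when $x\to\infty$) are all welcome elaborations of steps the paper leaves implicit.
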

\begin{proof}
By a standard bootstrapping argument applied to \eqref{eq:sdpt}, we 
have, for large $\alpha$,
$$
    r = \zeta(3)^{1/3}\left(1 - \frac{\zeta(3)^{1/3}\alpha+2}
    {3\zeta(3)}\, e^{-\zeta(3)^{1/3}\alpha}
    \Lpa{1+O\lpa{(1+\alpha^2)e^{-\zeta(3)^{1/3}\alpha}}}\right). 
$$
Along with \eqref{eq:m-infty}, the ratio between $G_{n,m}$ and 
$G_{n,n}$ thus has the form \eqref{eq:Gnm-Gnn}.
\end{proof}

Similar to Theorem~1.1 in \cite{Pittel2005}, we may conclude that
there is an exponential decay of the number of BPPs of size $n$ and
width $m$ when $m$ is close to the typical width, which is of order
$\Theta(n^{1/3}\log n)$. See \cite{Erdos1941} for a similar Gumbel
limiting distribution of the largest part size in random integer
partitions, which is one of the first results of this type, and also
\cite{Mutafchiev2006a} for the same phenomenon in random ordinary
plane partitions.

\section{Phase transitions in $m$-rowed plane partitions}
\label{sec:m-rowed}

Our method of proof extends to some other classes of plane
partitions. For simplicity, we only consider briefly in this section
plane partitions with $m$ rows, which has the known  generating
function (see \cite{Andrews1976})
\[
    \sum_{n\ge0}H_{n,m}z^n
    = \prod_{k\ge1}\lpa{1-z^k}^{-\min\{k,m\}}
    = P(z)^m \tilde{Q}_m(z)
	= \exp\llpa{\sum_{\ell\ge1}\frac{\tilde{U}_m(z^\ell)}{\ell}},
\]
where $H_{n,m}$ denotes the number of $m$-rowed plane 
partitions of $n$, $P$ is given in \eqref{eq:han-xiong}, and 
\[
	\tilde{Q}_m(z) := \prod_{1\le k<m}\lpa{1-z^k}^{m-k}, 
    \quad\text{and}\quad
    \tilde{U}_m(z) := \frac{z(1-z^m)}{(1-z)^2}.
\]
For $2\le m\le 9$, these partitions appear in OEIS with the following 
identities. 
\begin{center}
\begin{tabular}{c|cccc}
$m$ & $2$ & $3$ & $4$ & $5$ \\
OEIS & A000990 & A000991 & A002799 & A001452\\ \hline
$m$ & $6$ & $7$ & $8$ & $9$ \\
OEIS & A225196 & A225197 & A225198 & A225199
\end{tabular}    
\end{center}

For simplicity, we only describe the transitional behavior of $\log 
H_{n,m}$. Define 
\begin{align}\label{eq:eta-t}
    \eta(t) := \sum_{j\ge1}\frac{1-e^{-jt}}{j^3}.
\end{align}
\begin{thm} \label{thm:m-rowed} Let $\alpha := m/n^{1/3}$. Then 
\begin{align}\label{eq:logH-ua}
    \frac{\log H_{n,m}}{n^{2/3}} 
    \sim H(\alpha) 
    := r+r^{-2}\eta(\alpha r),
\end{align}
uniformly as $m\to\infty$ and $m\le n$, where     
$r=r(\alpha)>0$ solves the equation 
\[
    r^3-2\eta(\alpha r)+\alpha r \eta'(\alpha r)= 0.
\]
In particular,
\begin{align}\label{eq:H-alpha-as}
    H(\alpha)
    \sim \begin{dcases}
        \frac{2\pi}{\sqrt{6}}\,\sqrt{\alpha}, 
        & \text{if }\alpha\to0;\\
        3\cdot 2^{-2/3}\zeta(3)^{1/3},
        & \text{if }\alpha\to\infty.
    \end{dcases}
\end{align}
\end{thm}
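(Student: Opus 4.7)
The plan is to mirror the analysis used for $G_{n,m}$ in Theorem~\ref{thm:log-pt}, working from the identity
\[
    \log H_m(e^{-\tau}) = \sum_{\ell\ge1}\frac{\tilde U_m(e^{-\ell\tau})}{\ell}, \qquad
    \tilde U_m(e^{-\tau})=\frac{e^{-\tau}(1-e^{-m\tau})}{(1-e^{-\tau})^2},
\]
whose leading behavior as $\tau\to0$ is $(1-e^{-m\tau})/\tau^2$ uniformly in $m\tau$. The first step is therefore to establish the uniform approximation
\[
    \log H_m(e^{-\tau})= \frac{\eta(m\tau)}{\tau^2}+\text{lower-order corrections}
\]
in the half-plane $\Re(\tau)>0$. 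One clean way is to show, by Euler-Maclaurin applied to $\log H_m(e^{-\tau})=-\sum_{k\ge1}\min(k,m)\log(1-e^{-k\tau})$, that the leading integral contribution is
\[
    -\frac{1}{\tau^2}\int_0^{m\tau} u\log(1-e^{-u})\dd u -\frac{m}{\tau}\int_{m\tau}^{\infty}\log(1-e^{-u})\dd u,
\]
and then verify that the derivative with respect to $T=m\tau$ of this function of $T$ equals $\sum_{j\ge1}e^{-jT}/j^2$, so that the function itself is exactly $\eta(T)$; alternatively, Mellin inversion of $\Gamma(s)\zeta(s+1)F_m(s)$ with $F_m(s)=\zeta(s-1)-\zeta(s-1,m+1)+m\zeta(s,m+1)$ combined with the Hurwitz functional equation, in the style of Proposition~\ref{prop:E0-est}, will produce the same main term plus an exponentially small remainder.

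The second step is the saddle-point analysis. The bounds of Section~\ref{sec:just} transfer almost verbatim: the coefficients of $\log H_m(z)$ are nonnegative, so $|H_m(e^{-\rho-it})|/H_m(e^{-\rho})$ admits uniform bounds of the form $1-c\rho^{-2}t^2$ for $|t|\le\rho$ and $\le 7/8$ for $\rho\le|t|\le\pi$, after which the analog of Theorem~\ref{thm:Gnm-spa} gives
\[
    \log H_{n,m}= n\rho+\log H_m(e^{-\rho})+O(\log(n\rho)),
\]
with $\rho=\rho(n,m)$ defined by $n=-\partial_\tau \log H_m(e^{-\tau})|_{\tau=\rho}$. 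Differentiating the Step-1 expansion, the saddle-point equation becomes
\[
    n\rho^3 \sim 2\eta(m\rho)-m\rho\,\eta'(m\rho).
\]
Inserting $m=\alpha n^{1/3}$ and $\rho=r n^{-1/3}$ yields $r^3-2\eta(\alpha r)+\alpha r\,\eta'(\alpha r)=0$, as stated.

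The third step is to check existence/uniqueness of $r(\alpha)>0$ and to read off \eqref{eq:logH-ua} and \eqref{eq:H-alpha-as}. Setting $\tilde R(x):=2\eta(x)-x\eta'(x)$, one has $\tilde R(0)=0$, $\tilde R(\infty)=2\zeta(3)$, and
\[
    \tilde R'(x)=\eta'(x)-x\eta''(x)=\sum_{j\ge1}\frac{(1+jx)e^{-jx}}{j^2}>0,
\]
so the argument of Lemma~\ref{lem:unique-r} gives a unique positive solution of $r^3=\tilde R(\alpha r)$, increasing in $\alpha$. Substituting back, $\log H_{n,m}/n^{2/3}\to r+\eta(\alpha r)/r^2=H(\alpha)$. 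The boundary limits follow from $\eta(x)\sim\pi^2 x/6$ as $x\to0$, giving $r\sim\pi\sqrt{\alpha/6}$ and hence $H(\alpha)\sim 2r=2\pi\sqrt{\alpha}/\sqrt6$, and from $\eta(x)\to\zeta(3)$, $x\eta'(x)\to0$ as $x\to\infty$, giving $r^3\to2\zeta(3)$ and $H(\alpha)\to 3r/2=3\cdot2^{-2/3}\zeta(3)^{1/3}$.

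The main obstacle is Step 1: obtaining the leading expansion with uniform control over the full range $1\le m\le n$, in particular across the critical scale $m\asymp n^{1/3}$ where $m\tau=\alpha r$ is bounded away from both $0$ and $\infty$. The Mellin route requires the cancellation of the formal pole at $s=2$ between $\zeta(s-1)$ and $\zeta(s-1,m+1)$ to be made uniform in $m$, which is where the structural function $\eta(m\tau)$ naturally emerges. Once this uniform expansion is in hand, the remaining saddle-point work is routine.
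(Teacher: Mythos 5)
Your proposal is correct and mirrors the paper's own proof: a uniform Euler--Maclaurin expansion of $\log H_m(e^{-\tau})$ yielding the leading term $\eta(m\tau)/\tau^2$, followed by a saddle-point analysis (transferring the bounds of Section~\ref{sec:just}) and elementary analysis of the saddle-point equation as in Lemma~\ref{lem:unique-r}. The only cosmetic difference is that you apply Euler--Maclaurin directly to $-\sum_{k}\min(k,m)\log(1-e^{-k\tau})$, splitting the sum at $k=m$, whereas the paper factors out $P(z)^m$ and applies Euler--Maclaurin only to $\log\tilde{Q}_m(e^{-\tau})$; the nice observation that the integral term equals exactly $\eta(m\tau)/\tau^2$, and your computation of $\tilde{R}'(x)$ and the two limits of $H(\alpha)$, all check out.
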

\begin{proof}(Sketch)
We consider $\tau$ with $\Re(\tau) > 0$. By the Euler-Maclaurin summation formula (see \cite[Chapter~A.7]{Flajolet2009}), we obtain
\begin{align*}
    \log \tilde{Q}_m(e^{-\tau})
    &= \frac{\eta(m\tau)}{\tau^2} 
    +\frac m2\log\Lpa{\frac{2\pi}{\tau}}
    -\frac{\pi^2m}{6\tau}
    -\frac{\log m}{12}+\frac{m\tau}8
    +\zeta'(-1)\\
    &\qquad -\frac1{12}\log\Lpa{\frac{1-e^{-m\tau}}{\tau}}
    -\frac{\tau^2(1+10e^{-m\tau}+e^{-2m\tau})}{2880
    (1-e^{-m\tau})^2}+O\Lpa{\frac{|\tau|^4}{|1-e^{-m\tau}|^4}},
\end{align*}
which holds uniformly as long as $\tau\to0$ and $m\to\infty$.
Then in this range
\begin{align*}
    m\log P(e^{-\tau})+\log \tilde{Q}_m(e^{-\tau})
    &= \frac{\eta(m\tau)}{\tau^2} 
    -\frac{\log m}{12}+\frac{m\tau}{12}
    +\zeta'(-1) -\frac1{12}\log\Lpa{\frac{1-e^{-m\tau}}{\tau}}\\
    &\qquad
    -\frac{\tau^2(1+10e^{-m\tau}+e^{-2m\tau})}{2880
    (1-e^{-m\tau})^2}\\
    &\qquad
    +O\Lpa{\frac{|\tau|^4}{|1-e^{-m\tau}|^4}+me^{-\Re(4\pi^2/\tau)}}.
\end{align*}

In particular, when $m/n^{1/3}\to\infty$, then $\eta(m\tau) \sim 
\zeta(3)$ and $\eta'(m\tau)=o(1)$. Thus $r\sim 
(2\zeta(3))^{1/3}$, and 
\[
    \log\lpa{[z^n]P(z)^m \tilde{Q}_m(z)}
    \sim 3\zeta(3)^{1/3}(n/2)^{2/3},
\]
consistent with \eqref{eq:pp}. On the other hand, when 
$m=o(n^{1/3})$, we use the asymptotic expansion
\[
    \eta(z) = \frac{\pi^2z}{6}+\frac{z^2}4\Lpa{2\log z
    -3}+\sum_{j\ge1}\frac{B_j z^{j+2}}{j\cdot (j+2)!},
\]
the series being convergent when $|z|<2\pi$. Thus in this case, using the saddle point method,
\[
    \log\lpa{[z^n]P(z)^m \tilde{Q}_m(z)}
    \sim \frac{2\pi}{\sqrt{6}}\sqrt{\alpha}n^{2/3}
    =\frac{2\pi}{\sqrt{6}}\sqrt{nm}. 
\]
The theorem is proved by examining the error terms in each case. We 
omit the details. 
\end{proof}

When $m\rho=o(1)$, we can write down more precise expansions, similar 
to \eqref{eq:n-1over7}, beginning with 
\[
    \log \tilde{Q}_m(e^{-\tau})
    \sim \sum_{1\le k<m} (m-k)\log (k\tau)
    + \sum_{j\ge1}\frac{B_j\varsigma_j(m)}{j\cdot j!}\,\tau^j,
\]
while in the case of BPPs the corresponding expansion is a finite one 
(with exponentially smaller error in $1/\tau$). The infinite series 
is divergent when $m|\tau|\ge 2\pi$. Here 
\[
	\varsigma_j(m) := \sum_{1\le k<m} (m-k)k^j
	= \frac{m}{j+1}\lpa{B_{j+1}(m)-B_{j+1}}
	-\frac1{j+2}\lpa{B_{j+2}(m)-B_{j+2}},
\]
is a polynomial in $m$ of degree $j+2$ and divisible by $m(m-1)$, the 
$B_j(x)$ being Bernoulli polynomials; see \eqref{eq:bern-poly}. In 
particular,
\begin{small}
\[
    \;\varsigma_1(m) = \frac{m(m^2-1)}6,\;
    \varsigma_2(m) = \frac{m^2(m^2-1)}{12}.
\]
\end{small}
The saddle-point equation is now of the form 
\begin{align*}
    N := n-\frac{m(2m^2-1)}{24}
    \sim \frac{m\pi^2}{6\rho^2}
    -\frac{m^2}{2\rho}
    -\sum_{j\ge2}\frac{B_j\varsigma_j(m)}{j!}\,\rho^{j-1}.
\end{align*}
Then, writing $\varsigma_j(m) = m\bar{\varsigma}_j(m)$,
\begin{align*}
    \rho &= \sqrt{\frac{m}{n}}
    \llpa{\frac{\pi^2}{6}
    -\frac{m}2\,\rho+\frac{2m^2-1}{24}\,\rho^2
    -\sum_{j\ge2}\frac{B_j\bar{\varsigma}_j(m)}{j!}\,
    \rho^{j+1}}^{1/2} =: r \Psi(\rho),
\end{align*}
where $r := \pi\sqrt{m/(6n)}$ and 
\[
    \Psi(\rho) := \llpa{1-\frac{3m}{\pi^2}\,\rho
    +\frac{2m^2-1}{4\pi^2}\,\rho^2
    -\frac{6}{\pi^2}
    \sum_{j\ge2}\frac{B_j\bar{\varsigma}_j(m)}{j!}\,
    \rho^{j+1}}^{1/2}.
\]
Thus, by the Lagrange Inversion Formula, 
\[
    \rho \sim \sum_{j\ge1}d_j r^j,\with
    d_j = \frac1j[t^{j-1}]\Psi(t)^j.
\]
Since each $d_j=d_j(m)$ is a polynomial in $m$ of degree $m-1$, we 
see that the general term in the expansion of $\rho$ is of the form 
$m^{(3j-2)/2}/n^{j/2}$, which, after substituting such $\rho$ into 
the corresponding saddle-point approximation gives an expansion in 
terms of $r$ as follows:
\begin{align*}
    [z^n]P(z)^m\tilde{Q}_m(z)
    \sim \sqrt{2}\,\pi N^{-(m+5)/4}(m/24)^{(m+3)/4}
    \exp\llpa{\pi\sqrt{\frac{Nm}{6}}
    +\frac{m^2}{4}+\sum_{j\ge1}\frac{e_j(m)}{N^{j/2}}},
\end{align*}
where $e_j(m)$ is a polynomial of degree $(3j+4)/2$. In general, if 
$n^{j_0/(3j_0+4)}\asymp m =o(n^{(j_0+1)/(3j_0+7)})$, we have the 
asymptotic approximation
\[
    [z^n]P(z)^m\tilde{Q}_m(z)
    \sim \sqrt{2}\,\pi N^{-(m+5)/4}(m/24)^{(m+3)/4}
    \exp\llpa{\pi\sqrt{\frac{Nm}{6}}
    +\frac{m^2}{4}+\sum_{1\le j<j_0}\frac{e_j(m)}{N^{j/2}}}.
\]
In particular, if $m=o(N^{1/7})$, then $j_0=0$, while if 
$m=o(N^{1/5})$, then retaining the term $e_1(m)/\sqrt{N}$ and 
dropping the remaining terms yields an error of order $o(1)$.
 
\begin{rmk}($m$-rowed plane partitions whose non-zero parts 
decrease strictly along each row) The generating function now has the 
form (see \cite{Gordon1969})
\begin{align*}
    F_m(z) &:= \prod_{k\ge1}\lpa{1-z^k}^{-\tr{m/2}}
    \times \prod_{k\ge1}\lpa{1-z^{2k-1}}^{-2\{m/2\}}
    \times \prod_{1\le k\le m-2}\lpa{1-z^k}^{\tr{(m-k)/2}} \\
    & = \frac{P(z)^{\tr{m/2}+2\{m/2\}}}{P(z^2)^{2\{m/2\}}} 
    \,\bar{Q}_m(z),
\end{align*}
where $P(z)$ is as in \eqref{eq:han-xiong} and $\bar{Q}_m(z) := 
\prod_{1\le k\le m-2}\lpa{1-z^k}^{\tr{(m-k)/2}}$. Note that
\[
    F_m(z) = \llpa{\frac{P(z)}{P(z^2)}}^{2\{m/2\}}
    \exp\llpa{\sum_{\ell\ge 1}\frac{\bar{U}_m(z^\ell)}{\ell}},
    \with \bar{U}_m(z) 
    := \frac{z^{1+\mathbf{1}_{\text{m odd}}}-z^{m+1}}
    {(1-z)(1-z^2)},
\]
where $\mathbf{1}_{\text{m odd}}$ is the indicator function for $m$ being odd. We then deduce the same type of transitional behavior as that of 
$m$-rowed plane partitions:
\[
    \log\lpa{[z^n]P(z)^m \bar{Q}_m(z)}
    \sim \Lpa{r+\frac{\eta(\alpha r)}{2r^2}}n^{2/3},
\]
where $\eta$ is defined in \eqref{eq:eta-t} and $r>0$ solves the 
equation $2r^3-2\eta(\alpha r)+\alpha r \eta'(\alpha r)= 0$. 
\end{rmk}

\begin{rmk} In a very similar manner, we can derive the phase 
transitions in the asymptotics of 
\[
    [z^n]\prod_{1\le k\le m}\lpa{1-z^k}^{-k},
\]
the difference here being that for small $m=O(1)$ the saddle-point 
method fails and one needs instead the singularity analysis 
\cite{Flajolet2009} for the corresponding asymptotic approximation. 
Indeed, singularity analysis applies when $1\le m=o(n^{1/3})$:
\[
    [z^n]\prod_{1\le k\le m}\lpa{1-z^k}^{-k}
    \sim \frac{[z^n](1-z)^{-m(m+1)/2}}
    {\prod_{1\le k\le m}k^k} 
    \sim \frac{n^{m(m+1)/2-1}}
    {\Gamma(m(m+1)/2)\prod_{1\le k\le m}k^k},
\]
while our saddle-point analysis applies when $m\to\infty$. 
Furthermore, similar to \eqref{eq:logH-ua}, the transitional behavior 
is described by the function 
\[
    \frac{\eta(\alpha r)}{r^2}-\frac{\alpha}r\,
    \text{Li}_2(e^{-\alpha r})
    = \frac1{r^2}\sum_{j\ge1}\llpa{
    \frac{1-e^{-j\alpha r}}{j^3}-\frac{\alpha r e^{-j\alpha r}}{j^2}},
\]
where $\text{Li}_2(z)$ denotes the dilogarithm function, and $r>0$ 
solves the equation
\[
    2\eta(\alpha r)-2\alpha r\text{Li}_2(e^{-\alpha r})
    +(\alpha r)^2\log(1-e^{-\alpha r})=0.
\]
\end{rmk}

\section*{Acknowledgements}
We thank the two referees for their valuable comments and suggestions 
that substantially improved  the paper. 

\bibliographystyle{abbrv}
\bibliography{bpp-col}

\end{document}